\theoremstyle{plain} %
\newtheorem{theorem}{Theorem}[section]
\newtheorem{lemma}[theorem]{Lemma}
\newtheorem{assumption}[theorem]{Assumption}
\newtheorem{remark}[theorem]{Remark}
\title{A rigorous derivation of Haff's law for a periodic two-disk fluid}
\author[A. Grigo]{Alexander Grigo}
\address{University of Oklahoma, Department of Mathematics, Norman OK 73019, USA}
\email{grigo@math.ou.edu}
\thanks{The author's work was partially supported by the NSF grant DMS-1413428.
The author would like to thank
Alexey Korepanov and Ian Melbourne for very stimulating discussions
during the author's visit of the Mathematics Institute at
the University of Warwick, and Leonid Bunimovich for his long-time
support.
The author would like to acknowledge the warm hospitality of
the department of Mathematics at the Southern University of Science
and Technology, China, where this work was finished.
}
\begin{document}

\begin{abstract}
  We derive Haff's cooling law for a periodic fluid consisting
  of two hard disks per unit cell by reducing it to a point particle
  moving inside a Sinai billiard with finite horizon with an
  inelastic collision rule.
  Indeed, our results also apply to general dispersing billiards
  with piece-wise smooth boundary with finite horizon and no cusps.
\end{abstract}

\maketitle

\tableofcontents

\section{Introduction}

One of the central problems in statistical mechanics is to derive
a macroscopic description of a many particle systems based on
a microscopic description. In particular, expressing the
corresponding transport coefficients in terms of the microscopic
interaction model is of interest. Due to its simplicity hard-sphere
models are often used in numerical simulations as well as in
mathematically rigorous investigations. See \cite{MR1805337}
for a collection of surveys on this topic. Mathematically these
models are equivalent to a billiard in a spatial domain of
dimension equal to the number of degrees of freedom of the
hard-sphere model.
In the derivation of transport coefficient one relies on
statistical properties of the microscopic dynamics
\cite{zbMATH00052458}, so that we are interested in
hard-sphere models, i.e. billiards, with hyperbolic dynamics
and good statistical properties.

By now the mathematical analysis of planar hyperbolic billiards is well
developed
\cite{MR597749,MR606459,MR1071936,MR1138952,MR1637655,MR1675363,MR2229799}.
However, for
many particle hard-sphere systems it is extremely challenging
to even show ergodicity. In
\cite{MR1165188,MR2004458,MR3607588}
ergodicity was proven for special model of many interacting particles.
Despite recent progress
\cite{MR1915299,MR2052299,MR2191384,MR2453251,MR2914139}
there is little hope that current techniques will
be able to provide a proof of
statistical properties of multi-particle systems
in the foreseeable future.

The immense technical difficulty is the primary reason why
the mathematically rigorous study of problems of statistical
mechanics, like transport, has in the context of mechanical
models been limited to the study of billiard model in
two dimensional domain \cite{MR1805337}. These minimal
models were used to prove existence of diffusion
\cite{MR606459,MR1138952,MR1149489,MR2349520}
and viscosity
\cite{MR1376436,CHERNOV200037}.
But already a proof of heat conductivity still remains an open challenge
\cite{MR1773043}.

Despite the fact that the statistical properties of
multidimensional billiards remain a challenge, the theory
of planar hyperbolic billiards is developed enough to
also study perturbations of these. Popular choices are
(small) external fields (typically in combination with
a thermostat), which were shown in
\cite{MR1832968,MR1224092,MR2389891}
and
\cite{MR2737493,MR2583573}
to be models that exhibit Ohm's law and the Einstein relation between
the conductivity and diffusion constant.

Modelling dissipative interactions, especially in the context of
kinetic theory of granular media, has been an active area
of research, e.g.
\cite{MR2101911,MR2436466,MR2669629} and references therein.
A novel feature, due to the dissipative nature of the interactions,
is that the interactions slow down the particles. Hence understanding
the cooling process is of interest. In fact, it was shown in
\cite{2150608} that the rate at which the total energy decreases
follows a simple relation, namely as a function time
the inverse of the square-root of the total energy (or rather temperature)
follows a straight line. This relation is commonly known as
Haff's law, and present an additional transport phenomenon, namely
the transport of energy from the system to the ambient environment.

In the present paper we follow the above outlined
philosophy of \cite{MR606459,MR1138952,MR1376436,CHERNOV200037,MR3607588}
and investigate Haff's law in the simplest model, namely two
hard disks on a two-dimensional torus that dissipate energy
due to inelastic collisions. Unfolding the dynamics of the two
disks on the torus to the plane as shown in Fig.~\ref{fig_TwoDisk}
\begin{figure}[ht!]
  \centering
  \begin{subfigure}[b]{0.45\textwidth}
    \centering
    \includegraphics[width=0.95\textwidth]{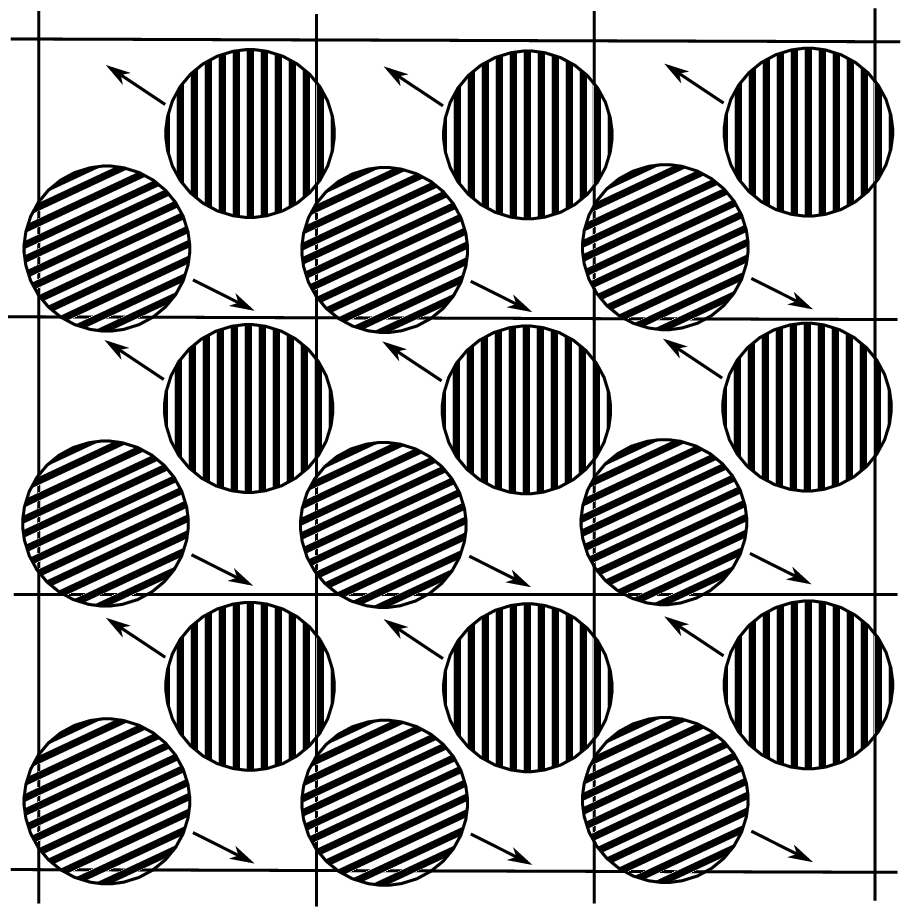}
    \caption{finite horizon}
    \label{fig_TwoDisk_finiteHorizon}
  \end{subfigure}
  \begin{subfigure}[b]{0.45\textwidth}
    \centering
    \includegraphics[width=0.95\textwidth]{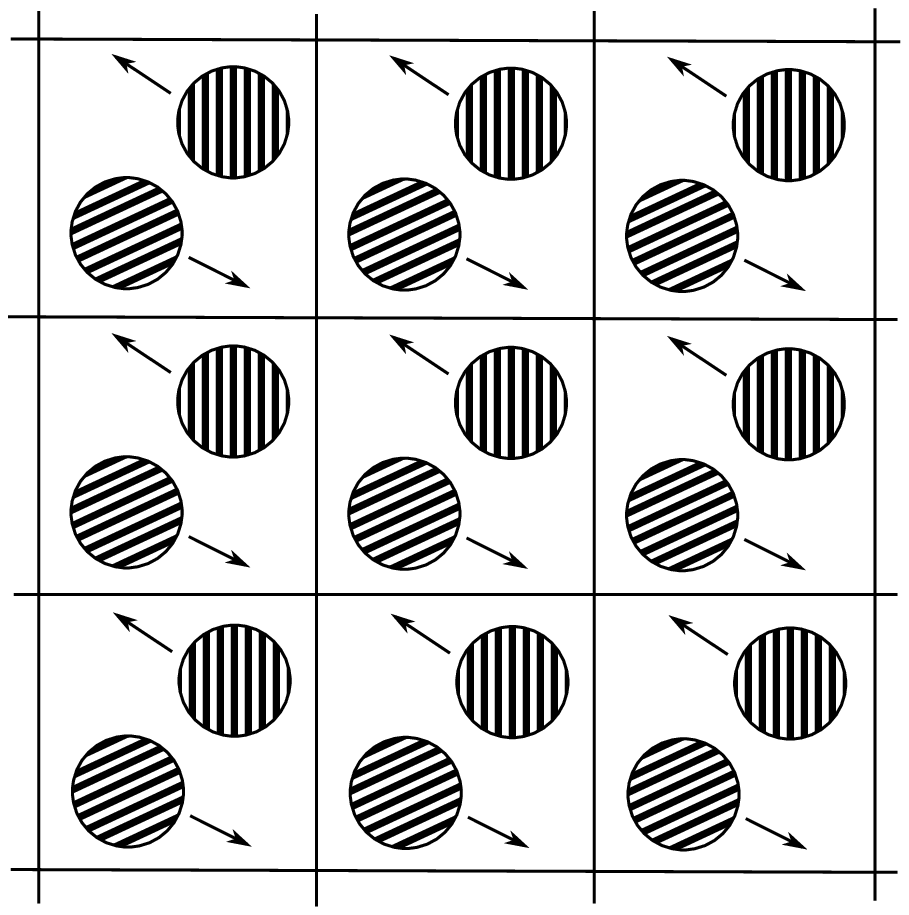}
    \caption{infinite horizon}
    \label{fig_TwoDisk_infiniteHorizon}
  \end{subfigure}
  \caption{The periodic two-disk fluid with finite and infinite horizon.}
  \label{fig_TwoDisk}
\end{figure}
explains why this model corresponds to a periodic two-disk fluid.

As is commonly done
\cite{MR2101911} we will only consider inelastic collisions
that preserve the total momentum of the two particles.
This has the advantage that their center of mass moves with constant
velocity as for the elastic collisions. Therefore, the same
reduction as in the elastic case allows us to only consider the
dynamics of the relative coordinates, which corresponds to a point
particle moving on the two-dimensional torus with a circular
scatterer removed as shown in Fig.~\ref{fig_TwoDisk_unitcell}.
\begin{figure}[ht!]
  \centering
  \begin{subfigure}[b]{0.45\textwidth}
    \centering
    \includegraphics[width=0.75\textwidth]{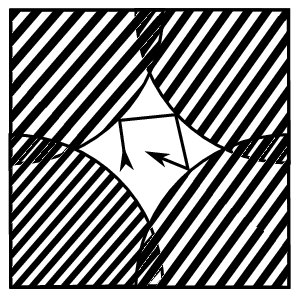}
    \caption{finite horizon}
    \label{fig_TwoDisk_finiteHorizon_unitcell}
  \end{subfigure}
  \begin{subfigure}[b]{0.45\textwidth}
    \centering
    \includegraphics[width=0.75\textwidth]{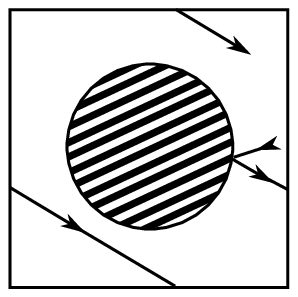}
    \caption{infinite horizon}
    \label{fig_TwoDisk_infiniteHorizon_unitcell}
  \end{subfigure}
  \caption{The relative motion in the
  periodic two-disk fluid with finite and infinite horizon.}
  \label{fig_TwoDisk_unitcell}
\end{figure}
Depending on the size of the two particles relative to the size of the
torus the two particles can pass each other or not, which corresponds
to an infinite or finite horizon Sinai billiard for the dynamics of
the relative coordinates. As in
\cite{MR606459,MR1138952,MR1376436,CHERNOV200037}
we will only consider the case of a finite horizon, i.e. large
particles, to minimize unnecessary technical details and to keep
the presentation of the essentials of the present paper as
clear as possible. Comments on the infinite horizon will be given
in Section~\ref{sect_conclusions}.
In fact, instead of limiting our study to
the very special billiard shown in
Fig.~\ref{fig_TwoDisk_unitcell} we will consider
more general dispersing billiard tables without cusps and
with finite horizon. These would
naturally correspond to the motion of a point particle
in a periodic configuration of scatterers (if the billiard
is on a torus) or inside a closed billiard table with
dispersing boundary components.

In order to make use of the fact that the billiard dynamics
with elastic collisions is known to have strong statistical
properties we naturally will consider the case of small
dissipation, which is also what is often studied in the
setting of kinetic theory, e.g. \cite{MR2101911,MR2436466,MR2669629}.
This naturally presents a fast-slow system, where the usual
billiard coordinates represent the fast variables and the speed (or energy)
of the particle is the slowly changing variable.
Our main result is the following statement of the cooling
process over the relevant time scale, which recovers Haff's law
in the special case where the dissipation mechanism is independent
of the relative speed at the moment of collision, i.e.
a constant restitution coefficient.
A precise formulation is given in Theorem~\ref{thm_avg4}.
\begin{theorem}[Haff's law]
  \label{thm_mainResult}
  Let $Q$ be a dispersing billiard table with piece-wise
  smooth boundary with finite horizon and no cusps.
  In the limit of vanishing dissipation the evolution of the
  speed $\mathsf c$ of the particle
  over s span of time reciprocal to the size of the dissipation
  is uniformly approximated by the solution $\bar{\mathsf c}$ to
  \begin{equation*}
    \frac{d}{dt} \bar{\mathsf c}(t)
    =
    -
    \frac{ |\partial Q| }{ \pi |Q| }
    \, \bar{\mathsf c}(t)^2
    \int_0^{\frac{\pi}{2}}
    q(\bar{\mathsf c}(t)\,\cos\varphi )
    \,\cos^3\varphi\,d\varphi
    \;,
  \end{equation*}
  where the function $q$ models the dissipation
  mechanism.
\end{theorem}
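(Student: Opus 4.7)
The plan is to treat the dissipative billiard as a fast--slow system: since the per-collision change in speed is small, the unperturbed dynamics (the Sinai billiard flow with elastic collisions) serves as the fast subsystem, whose invariant measure on the collision cross-section is $d\mu = (2|\partial Q|)^{-1}\cos\varphi\,dr\,d\varphi$. I would organise the argument in three stages.

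\emph{Reduction to a perturbed discrete map.} I first pass from the flow to the collision map $T$, recording at each collision the phase point $x_n=(r_n,\varphi_n)$ and the speed $\mathsf c_n$. The speed is constant along free flights, and a short computation from the momentum-preserving inelastic collision rule gives, to leading order in the dissipation strength $\epsilon$,
\begin{equation*}
  \mathsf c_{n+1} = \mathsf c_n - \epsilon\,\mathsf c_n\cos^2\varphi_n\,q(\mathsf c_n\cos\varphi_n) + O(\epsilon^2),
\end{equation*}
with the function $q$ read off from the restitution law. Hence $\{\mathsf c_n\}$ is a slow variable that drifts by $O(\epsilon)$ per step of $T$, and one expects a nontrivial limit only on the rescaled time scale $t\sim 1/\epsilon$.

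\emph{Identification of the averaged vector field.} A candidate limit ODE is obtained by averaging the one-step drop in $\mathsf c$ against $\mu$ and converting collision-count to physical time via Santalo's formula $\bar\tau(\mathsf c)=\pi|Q|/(\mathsf c\,|\partial Q|)$. The calculation
\begin{equation*}
  \frac{1}{\bar\tau(\mathsf c)}\int_{-\pi/2}^{\pi/2}\mathsf c\cos^2\varphi\,q(\mathsf c\cos\varphi)\,\frac{\cos\varphi}{2}\,d\varphi
  =\frac{|\partial Q|}{\pi|Q|}\,\mathsf c^2\int_0^{\pi/2}q(\mathsf c\cos\varphi)\cos^3\varphi\,d\varphi
\end{equation*}
reproduces precisely the right-hand side of the ODE in Theorem~\ref{thm_mainResult} once the $\epsilon$ is absorbed into the time rescaling.

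\emph{Justification of the averaging limit.} The heart of the proof is to show that, uniformly on compact intervals of rescaled time, $|\mathsf c_\epsilon(t/\epsilon)-\bar{\mathsf c}(t)|\to 0$ as $\epsilon\to 0$. I would follow the averaging scheme developed by Chernov and Dolgopyat (and refined by B\'alint and collaborators) for billiards under small smooth external forcing, adapting it to the present setting where the $O(\epsilon)$ perturbation is impulsive and acts only at collisions. The essential inputs are exponential decay of correlations for H\"older observables under $T$ on a dispersing table with finite horizon and no cusps, the exact scaling invariance of $T$ under a uniform rescaling of speed (so the fast dynamics and its statistics are $\mathsf c$-independent, and the dependence of the drift on $\mathsf c$ enters only through the collision rate and through the argument of $q$), and a Gronwall-type comparison between the piecewise constant process $\mathsf c_\epsilon$ and the smooth solution $\bar{\mathsf c}$.

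The main obstacle, as always in averaging for billiards, is controlling orbit segments that accumulate near the singularity set of $T$, where hyperbolicity degenerates and both the collision perturbation and the Jacobian of $T$ blow up. These are handled by the standard growth-lemma machinery of Chernov: the finite-horizon/no-cusp hypothesis yields a uniform bound on the complexity of the singularity curves, while the $\cos\varphi$-weight of $\mu$ tames the blow-ups in the perturbation exactly as it does in the proofs of exponential mixing and the central limit theorem for dispersing billiards, producing the equidistribution estimates uniform in the slowly varying $\mathsf c$ that drive the Gronwall step.
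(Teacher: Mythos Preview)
Your averaged vector field and the three-stage strategy match the paper's. The gap is your claim that ``the fast dynamics and its statistics are $\mathsf c$-independent.'' This is false for the perturbed map: the inelastic collision rule alters not only the speed but also the outgoing angle, since by \eqref{eqn_updateRule_phi_c_explicit}
\[
\varphi_1 \;=\; \arctan\frac{\tan\tilde\varphi_1}{1-\eta(\mathsf c_0\cos\tilde\varphi_1)}
\]
depends on $\mathsf c_0$ through the argument of $\eta$. The $(s,\varphi)$ dynamics under $\hat{{\mathcal F}}$ is therefore a genuinely fully-coupled fast--slow system, not a skew product over the elastic billiard; the paper stresses this point explicitly in the introduction. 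Each collision shifts the angle by $O(\epsilon)$, and over the $O(1/\epsilon)$ collisions on the averaging time scale these shifts accumulate to $O(1)$. You cannot simply invoke mixing of the unperturbed map ${\mathcal F}$, because the ergodic sums you need run along orbits of $\hat{{\mathcal F}}$, and your sketch supplies no mechanism to transfer statistics from one to the other.

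The paper's remedy is the main technical content of the work. It first builds invariant unstable cones $\hat{{\mathcal C}}^u$, curvature and distortion bounds, and the full standard-pair/growth-lemma apparatus directly on the extended phase space $\hat{{\mathcal M}}={\mathcal M}\times(0,\infty)$ for $\hat{{\mathcal F}}$, uniformly in the dissipation parameters (Sections~\ref{sect_cones}--\ref{sect_ucurves}). Then, to evaluate the Birkhoff sums arising in Lemma~\ref{lem_est_dA_2}, it uses a Dolgopyat-type shadowing argument: the telescoping identity \eqref{eqn_sumf_termSep1} rewrites $f({\mathcal F}\circ\Pi\circ\hat{{\mathcal F}}^{k})$ as $f({\mathcal F}^{k+1}\circ\Pi)$ plus a sum of increments, and each increment is controlled by coupling the two $O(\epsilon)$-close unstable curves $\Pi\hat\gamma$ and $\Pi\circ P^{-1}\hat\gamma$ along stable manifolds of the elastic map ${\mathcal F}$ (Fig.~\ref{fig_shadowing}). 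That shadowing step---not a Gronwall comparison fed only by decay of correlations for ${\mathcal F}$---is what bridges the perturbed and unperturbed dynamics, and it is the missing ingredient in your outline.
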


The fact that in our model the dynamics of the fast variables
will depend on the slow variable makes this system a
so-called fully coupled fast-slow system, whose analysis is
generally hard.
Indeed, only very recently fully-coupled fast-slow systems were
investigated \cite{MR2062922,MR2547839,MR2241812}, however, under
the assumption of a smooth dynamics. Since billiards have
singularities those result do not apply right away.
More general dynamical aspects were considered in
\cite{MR3640023}, but not in the fully-coupled setting.
The only directly related work we are aware of is the
study of the motion of a heavy particle colliding with
a light particle \cite{MR2499824}, where as similar fast-slow
system is investigated.

In order to keep the presentation as clear as possible we
will not include lengthy proofs which are just slight modifications
of proofs of similar results. Instead we will point the reader to the
corresponding references, which are primarily
\cite{MR2229799,MR2499824,MR2583573,MR2737493}.

\section{Description of the map}

Let $Q$ denote a domain on the torus, such that
any ray emanating from any point on $\partial Q$ will intersect
$\partial Q$ at a distance that is uniformly bounded away from
$0$ and $\infty$.
Furthermore, we will assume that $\partial Q$ consists of
a finite number of piece-wise smooth (at least $C^3$ smooth) curves
that are convex inwards, also referred to as dispersing.
Consider a particle moving with velocity $v \in {\mathbb R}^2$
along a straight line inside of $Q$ until it reaches a point
$x \in \partial Q$. At such a point of collision the velocity
is instantaneously changed from its pre-collisional value $v_-$
to its post-collisional value $v_+$. Then the particular
continues to move along a straight line with velocity $v_+$
until the next point of collision and so on.

If $v_+$ is obtained from $v_-$ by means of reflection
of $v_-$ about the tangent to the point of
reflection $x \in \partial Q$, i.e.
$
v_+
=
(1 - 2\,{\mathcal N}\,{\mathcal N}^T)\,v_-
$
with ${\mathcal N}$ denoting the unit normal vector to
$x \in \partial Q$, then the dynamics of the
particle is called a dispersing billiard with finite horizon.
We refer to the monograph \cite{MR2229799} and references therein
for a detailed exposition of dynamical properties of such billiards.

In the present work we consider the following model of inelastic
reflection for the expression for $v_+$ in terms of
$v_-$
\begin{equation}
  \label{eqn_def_v_updateRule}
  v_+
  =
  [ 1 - (2 - \eta) \,{\mathcal N}\,{\mathcal N}^T ]\,v_-
  \;,\quad
  \eta
  \equiv
  \eta( -{\mathcal N} \cdot v_-)
  \qquad
  0 \leq \eta < 1
  \;,
\end{equation}
where ${\mathcal N}$ denotes again the unit normal vector
to $\partial Q$ at the point of collision, and
$\eta$ denotes the so-called normal restitution
coefficient, which in general is a function of the normal
velocity $-{\mathcal N} \cdot v_-$.
As detailed in the introduction
the choice of \eqref{eqn_def_v_updateRule}
is motivated by models in the kinetic theory of granular
materials \cite{MR2101911}.

Following standard practice in the theory of billiards
\cite{MR2229799} we decompose the flow into two separate
components. The first is the free flight from one
point on $\partial Q$ to another, the second being the
instantaneous change of the velocity at the moment of collision
with a point on $\partial Q$. The composition of a free
flight with a collision induces a map $\hat{{\mathcal F}}$
\begin{subequations}
  \label{eqn_def_bMapExt}
  \begin{equation}
    \hat{{\mathcal F}}
    \colon
    \hat{{\mathcal M}}
    \to
    \hat{{\mathcal M}}
    \quad\text{where}\quad
    \hat{{\mathcal M}}
    =
    {\mathcal M} \times (0,\infty)
    \quad\text{and}\quad
    {\mathcal M}
    =
    \partial Q \times (-\frac{\pi}{2}, \frac{\pi}{2} )
    \;,
  \end{equation}
  which is the natural generalization of the standard billiard map
  ${\mathcal F}$ defined on ${\mathcal M}$. And we use the usual coordinates
  \begin{equation}
    \hat{x}
    \equiv
    (s, \varphi, \mathsf c)
    \in \hat{{\mathcal M}}
    \quad\text{and}\quad
    x
    \equiv
    (s, \varphi)
    \in {\mathcal M}
    \;,
  \end{equation}
  where $s$ is the arc length parameter along
  $\partial Q$, $\varphi$ of reflection, and $\mathsf c$
  denotes the speed of the particle.
\end{subequations}
Furthermore, we let $\Pi$ denote the projection of $\hat{x}$ onto it
$(s, \varphi)$--component, i.e.
\begin{equation}
  \label{eqn_def_projection}
  \Pi \colon \hat{{\mathcal M}} \to {\mathcal M}
  \;,\quad
  \Pi(s, \varphi, \mathsf c)
  =
  (s, \varphi)
  \;.
\end{equation}

A natural representation of
$
(s_1, \varphi_1, \mathsf c_1)
=
\hat{{\mathcal F}}(s_0, \varphi_0, \mathsf c_0)
$
is given by first applying the standard billiard map
\begin{subequations}
  \label{eqn_def_bMapExt_rep}
  \begin{equation}
    (s_1, \tilde{\varphi}_1)
    =
    {\mathcal F}( s_0, \varphi_0)
  \end{equation}
  and then determine the angle $\varphi_1$ and speed $\mathsf c_1$
  according to \eqref{eqn_def_v_updateRule}
  \begin{equation}
    \label{eqn_updateRule_phi_c_implicit}
    \mathsf c_1 \,\cos\varphi_1
    =
    [ 1 - \eta(\mathsf c_0\,\cos\tilde{\varphi}_1 ) ]
    \,\mathsf c_0\,\cos\tilde{\varphi}_1
    \quad\text{and}\quad
    \mathsf c_1 \,\sin\varphi_1
    =
    \mathsf c_0\,\sin\tilde{\varphi}_1
  \end{equation}
\end{subequations}
i.e.
\begin{equation}
  \label{eqn_updateRule_phi_c_explicit}
  \begin{split}
    \varphi_1
    &=
    \arctan
    \frac{ \tan\tilde{\varphi}_1
    }{ 1 - \eta(\mathsf c_0\,\cos\tilde{\varphi}_1 ) }
    \\
    \mathsf c_1
    &=
    \mathsf c_0
    \,\sqrt{
    [ 1 - \eta(\mathsf c_0\,\cos\tilde{\varphi}_1 ) ]^2
    \,\cos^2\tilde{\varphi}_1
    +
    \sin^2\tilde{\varphi}_1
    }
    \;.
  \end{split}
\end{equation}
With this notation, introduce the mappings
\begin{equation}
  \label{eqn_def_mapPert}
  P(\tilde{\varphi}_1, \mathsf c_0)
  =
  (\varphi_1, \mathsf c_1)
  \quad\text{and}\quad
  \hat{P}(s_1, \tilde{\varphi}_1, \mathsf c_0)
  =
  (s_1, \varphi_1, \mathsf c_1)
  \;,
\end{equation}
and
\begin{equation}
  \label{eqn_def_bMapExtZero}
  \hat{{\mathcal F}}_0(s, \varphi, \mathsf c)
  =
  ( {\mathcal F}(s, \varphi), \mathsf c)
  \;,
\end{equation}
so that
\begin{equation*}
  \hat{{\mathcal F}}(s, \varphi, \mathsf c)
  =
  \hat{P}({\mathcal F}(s, \varphi), \mathsf c)
  =
  \hat{P} \circ \hat{{\mathcal F}}_0(s, \varphi, \mathsf c)
\end{equation*}
holds. Throughout the paper we will frequently make use of
\begin{equation}
  \label{eqn_def_dissCoeff_D_DD}
  \eta_1(w) = w\,\eta'(w)
  \;,\quad
  \eta_2(w) = w^2\,\eta''(w)
  \quad\text{for all}\quad w \geq 0
\end{equation}
to shorten the notation.

Using the two-stage representation \eqref{eqn_def_bMapExt_rep}
of $\hat{{\mathcal F}}$ we readily obtain an expression for
$\mathrm{D}\hat{{\mathcal F}}$ in terms of the derivative of $\mathrm{D}{\mathcal F}$, \cite{MR2229799},
and the derivative of the mapping
$
(\tilde{\varphi}_1, \mathsf c_0)
\mapsto
(\varphi_1, \mathsf c_1)
$.
The expression for
$\mathrm{D}{\mathcal F}(s_0, \varphi_0)$
is given by \cite{MR2229799}
\begin{subequations}
  \label{eqn_DbMapExt}
  \begin{equation}
    \label{eqn_DbMapExt_DbMap}
    \mathrm{D}{\mathcal F}(s_0, \varphi_0)
    =
    -\frac{1}{\cos\tilde{\varphi}_1}
    \begin{pmatrix}
      \tau_{01}\,{\mathcal K}_0 + \cos\varphi_0
      &
      \tau_{01}
      \\
      \tau_{01}\,{\mathcal K}_0\,{\mathcal K}_1
      +
      {\mathcal K}_0\,\cos\tilde{\varphi}_1
      +
      {\mathcal K}_1\,\cos\varphi_0
      &
      \tau_{01}\,{\mathcal K}_1 + \cos\tilde{\varphi}_1
    \end{pmatrix}
    \;,
  \end{equation}
  where $\tau_{01}$ denotes the length of the free path, and
  ${\mathcal K}_0$ and ${\mathcal K}_1$ denote the
  curvature of the boundary $\partial Q$ at
  $s_0$ and $s_1$, respectively.
  By differentiating \eqref{eqn_updateRule_phi_c_implicit}
  we obtain
  \begin{align*}
    \begin{pmatrix}
      -\mathsf c_1 \,\sin\varphi_1
      &
      \cos\varphi_1
      \\
      \mathsf c_1 \,\cos\varphi_1
      &
      \sin\varphi_1
    \end{pmatrix}
    \begin{pmatrix}
      d\varphi_1 \\
      d\mathsf c_1
    \end{pmatrix}
    &=
    \begin{pmatrix}
      -
      \mathsf c_0\,\sin\tilde{\varphi}_1
      \,[1 - \eta - \eta_1]
      &
      \cos\tilde{\varphi}_1
      \,[1 - \eta - \eta_1]
      \\
      \mathsf c_0\,\cos\tilde{\varphi}_1
      &
      \sin\tilde{\varphi}_1
    \end{pmatrix}
    \begin{pmatrix}
      d\tilde{\varphi}_1 \\
      d\mathsf c_0
    \end{pmatrix}
  \end{align*}
  where we used the short-hand notation
  $
  \eta \equiv \eta(\mathsf c_0\,\cos\tilde{\varphi}_1 )
  $
  and
  $
  \eta_1 \equiv \eta_1(\mathsf c_0\,\cos\tilde{\varphi}_1 )
  $a.
  Therefore it follows that
  \begin{equation}
    \label{eqn_DbMapExt_Dpert}
    \begin{split}
      \frac{ \partial\varphi_1 }{ \partial\tilde{\varphi}_1 }
      &=
      \frac{ 1 - \eta }{
      ( 1 - \eta )^2 \,\cos^2\tilde{\varphi}_1
      +
      \sin^2\tilde{\varphi}_1
      }
      -
      \sin^2\varphi_1
      \,\eta_1
      =
      \frac{ \sin\varphi_1 \,\cos\varphi_1
      }{ \sin\tilde{\varphi}_1 \,\cos\tilde{\varphi}_1 }
      -
      \sin^2\varphi_1
      \,\eta_1
      \\
      \frac{ \partial\varphi_1 }{ \partial\mathsf c_0 }
      &=
      \frac{1}{ \mathsf c_0 }
      \,\frac{ \sin\tilde{\varphi}_1 \,\cos\tilde{\varphi}_1
      }{
      ( 1 - \eta )^2 \,\cos^2\tilde{\varphi}_1
      +
      \sin^2\tilde{\varphi}_1
      }
      \,\eta_1
      =
      \frac{ \mathsf c_0 }{ \mathsf c_1^2 }
      \,\sin\tilde{\varphi}_1
      \,\cos\tilde{\varphi}_1
      \,\eta_1
      \\
      \frac{ \partial\mathsf c_1 }{ \partial\tilde{\varphi}_1 }
      &=
      \mathsf c_0
      \,\sin\tilde{\varphi}_1
      \,\cos\varphi_1
      \,\Big[
      \frac{ ( 2 - \eta )\,\eta }{ 1 - \eta }
      +
      \eta_1
      \Big]
      \\
      \frac{ \partial\mathsf c_1 }{ \partial\mathsf c_0 }
      &=
      \frac{ \mathsf c_1 }{ \mathsf c_0 }
      -
      \cos\varphi_1
      \,\cos\tilde{\varphi}_1
      \,\eta_1
      =
      \frac{ \sin\tilde{\varphi}_1 }{ \sin\varphi_1 }
      -
      \cos\varphi_1
      \,\cos\tilde{\varphi}_1
      \,\eta_1
    \end{split}
  \end{equation}
  where we made use of \eqref{eqn_updateRule_phi_c_explicit}
  to simplify.
  In particular,
  \begin{equation}
    \mathrm{D}\hat{{\mathcal F}}(s_0, \varphi_0, \mathsf c_0)
    =
    \begin{pmatrix}
      1 & 0 & 0 \\
      0 &
      \frac{ \partial\varphi_1 }{ \partial\tilde{\varphi}_1 }
      &
      \frac{ \partial\varphi_1 }{ \partial\mathsf c_0 }
      \\
      0 &
      \frac{ \partial\mathsf c_1 }{ \partial\tilde{\varphi}_1 }
      & 
      \frac{ \partial\mathsf c_1 }{ \partial\mathsf c_0 }
    \end{pmatrix}
    \begin{pmatrix}
      \mathrm{D}{\mathcal F}(s_0, \varphi_0)
      &
      \begin{matrix}
        0 \\
        0
      \end{matrix}
      \\
      \begin{matrix}
        0 & 0
      \end{matrix}
      & 1
    \end{pmatrix}
  \end{equation}
\end{subequations}
for the explicit expression for the derivative of $\hat{{\mathcal F}}$.

\section{Invariant cone fields}
\label{sect_cones}

Throughout we will denote by
${\mathcal V}_{\mathrm{min}}$, ${\mathcal V}_{\mathrm{max}}$, $\kappa$
three parameters that will always be assumed to satisfy
\begin{equation*}
  0 < {\mathcal V}_{\mathrm{min}} < {\mathcal V}_{\mathrm{max}} \leq \infty
  \;,\quad
  0 \leq \kappa < \infty
  \;.
\end{equation*}
For any such choice we define the cone field
\begin{equation}
  \label{eqn_def_uConeExt}
  \hat{{\mathcal C}}^u_{\hat{x}}
  =
  \Big\{
  d\hat{x} \operatorname{:}
  {\mathcal V}_{\mathrm{min}}
  \leq
  \frac{d\varphi}{ds}
  \leq
  {\mathcal V}_{\mathrm{max}}
  \quad\text{and}\quad
  \Big|
  \frac{d\mathsf c}{ \mathsf c\,\cos\varphi\,ds }
  \Big|
  \leq
  \kappa
  \Big\}
  \;,
\end{equation}
and the corresponding
\begin{equation}
  \label{eqn_def_uConeExt_projection}
  {\mathcal C}^u_{x}
  =
  \Pi \hat{{\mathcal C}}^u_{ \Pi(\hat{x})}
  =
  \Big\{
  dx \operatorname{:}
  {\mathcal V}_{\mathrm{min}}
  \leq
  \frac{d\varphi}{ds}
  \leq
  {\mathcal V}_{\mathrm{max}}
  \Big\}
\end{equation}
projection of $\hat{{\mathcal C}}^u_{\hat{x}}$ for any
$\hat{x}$ with $\Pi(\hat{x}) = x$.
Furthermore, we will assume throughout that
\begin{equation}
  \eta_{\mathrm{max}}
  =
  \sup_{\mathsf c \geq 0} \eta(\mathsf c)
  <\infty
  \;,\quad
  \eta_{1,\mathrm{max}}
  =
  \sup_{\mathsf c \geq 0} |\eta_1(\mathsf c)|
  <\infty
  \;,\quad
  \eta_{2,\mathrm{max}}
  =
  \sup_{\mathsf c \geq 0} |\eta_2(\mathsf c)|
  <\infty
  \;.
\end{equation}
Clearly $0 \leq \eta_{\mathrm{max}} \leq 1$, and for any
given billiard table $Q$ the two conditions
\begin{equation}
  \tag{C}
  \begin{split}
      \eta_{\mathrm{max}} + \eta_{1,\mathrm{max}}
      &<
      ( 1 - \eta_{\mathrm{max}} )\, \tau_{\mathrm{min}}\,{\mathcal K}_{\mathrm{min}}
      \\
      \frac{ \eta_{1,\mathrm{max}} }{ ( 1 - \eta_{\mathrm{max}})^{\frac{5}{2}} }
      \,\frac{
      \frac{ 2 - \eta_{\mathrm{max}} }{ 1 - \eta_{\mathrm{max}} }\,\eta_{\mathrm{max}}
      +
      \eta_{1,\mathrm{max}}
      }{
      \tau_{\mathrm{min}}\,{\mathcal K}_{\mathrm{min}}
      -
      \frac{ \eta_{\mathrm{max}} + \eta_{1,\mathrm{max}} }{ 1 - \eta_{\mathrm{max}} }
      }
      &<
      ( 1 - \eta_{\mathrm{max}} - \eta_{1,\mathrm{max}} )
      \,\frac{ \tau_{\mathrm{min}}\, {\mathcal K}_{\mathrm{min}}
      }{ 1 + \tau_{\mathrm{min}}\, {\mathcal K}_{\mathrm{max}} }
  \end{split}
  \label{eqn_uConeCondition}
\end{equation}
are satisfied for all small enough values of $\eta_{\mathrm{max}}$ and
$\eta_{1,\mathrm{max}}$. Here we assume that $\tau_{\mathrm{min}} >0$, which will
be a temporary assumption we make to simplify the exposition.
We will comment on the situation where $\tau =0$ right
before stating Theorem~\ref{thm_avg3}.

\begin{lemma}[Invariance of $\hat{{\mathcal C}}^u$ and ${\mathcal C}^u$]
  \label{lem_uCones_invariance}
  Suppose $\eta_{\mathrm{max}}$ and $\eta_{1,\mathrm{max}}$ satisfy
  \eqref{eqn_uConeCondition}.
  Then for any choice of the parameters
  ${\mathcal V}_{\mathrm{min}}$, ${\mathcal V}_{\mathrm{max}}$, $\kappa$
  such that
  \begin{align*}
    \kappa
    &\geq
    \frac{
    1 + \tau_{\mathrm{min}}\,{\mathcal K}_{\mathrm{min}}
    }{
    \sqrt{ 1 - \eta_{\mathrm{max}} }
    }
    \,\frac{
    \frac{ 2 - \eta_{\mathrm{max}} }{ 1 - \eta_{\mathrm{max}} }\,\eta_{\mathrm{max}}
    +
    \eta_{1,\mathrm{max}}
    }{
    \tau_{\mathrm{min}}\,{\mathcal K}_{\mathrm{min}}
    -
    \frac{ \eta_{\mathrm{max}} + \eta_{1,\mathrm{max}} }{ 1 - \eta_{\mathrm{max}} }
    }
    \,\Big( {\mathcal K}_{\mathrm{max}} + \frac{1}{ \tau_{\mathrm{min}} } \Big)
    \\
    {\mathcal V}_{\mathrm{min}}
    &\leq
    ( 1 - \eta_{\mathrm{max}} - \eta_{1,\mathrm{max}} )
    \,{\mathcal K}_{\mathrm{min}}
    -
    \frac{ \eta_{1,\mathrm{max}} }{ ( 1 - \eta_{\mathrm{max}} )^2 }
    \,\frac{ \kappa }{ 1 + \tau_{\mathrm{min}}\,{\mathcal K}_{\mathrm{min}} }
    \\
    {\mathcal V}_{\mathrm{max}}
    &\geq
    \Big( \frac{1 }{ 1 - \eta_{\mathrm{max}} } + \eta_{1,\mathrm{max}} \Big)
    \,\Big( {\mathcal K}_{\mathrm{max}} + \frac{1}{ \tau_{\mathrm{min}} } \Big)
    +
    \frac{ \eta_{1,\mathrm{max}} }{ ( 1 - \eta_{\mathrm{max}} )^2 }
    \,\frac{ \kappa }{ 1 + \tau_{\mathrm{min}}\,{\mathcal K}_{\mathrm{min}} }
  \end{align*}
  the corresponding cone fields $\hat{{\mathcal C}}^u$ and ${\mathcal C}^u$
  are invariant under $\hat{{\mathcal F}}$ and ${\mathcal F}$, respectively.
\end{lemma}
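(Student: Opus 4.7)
The plan is to exploit the two-stage factorization $\hat{\mathcal{F}} = \hat{P} \circ \hat{\mathcal{F}}_0$ from \eqref{eqn_def_bMapExtZero} and work entirely at the level of slopes. For a tangent vector $d\hat{x}_0 = (ds_0, d\varphi_0, d\mathsf{c}_0)$ I introduce
\begin{equation*}
  u_i = \frac{d\varphi_i}{ds_i},
  \qquad
  u_* = \frac{d\tilde{\varphi}_1}{ds_1},
  \qquad
  \xi_i = \frac{d\mathsf{c}_i}{\mathsf{c}_i \cos\varphi_i \, ds_i}.
\end{equation*}
The invariance of $\hat{\mathcal{C}}^u$ then reduces to the implication that $u_0 \in [\mathcal{V}_{\mathrm{min}}, \mathcal{V}_{\mathrm{max}}]$ and $|\xi_0| \leq \kappa$ imply $u_1 \in [\mathcal{V}_{\mathrm{min}}, \mathcal{V}_{\mathrm{max}}]$ and $|\xi_1| \leq \kappa$; invariance of $\mathcal{C}^u$ under $\mathcal{F}$ is the same statement with $u_*$ in place of $u_1$ and with no condition involving $\mathsf{c}$.

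For the first stage, manipulating \eqref{eqn_DbMapExt_DbMap} yields the classical mirror-equation identity
\begin{equation*}
  u_* = \mathcal{K}_1 + \frac{\cos\tilde{\varphi}_1}{\tau_{01} + \cos\varphi_0/(u_0 + \mathcal{K}_0)},
  \qquad
  \Big|\frac{ds_1}{ds_0}\Big|\cos\tilde{\varphi}_1 = \tau_{01}(u_0 + \mathcal{K}_0) + \cos\varphi_0.
\end{equation*}
From $u_0 \geq 0$ I immediately read off $\mathcal{K}_{\mathrm{min}} \leq u_* \leq \mathcal{K}_{\mathrm{max}} + 1/\tau_{\mathrm{min}}$, and since the lemma's hypotheses force $\mathcal{V}_{\mathrm{min}} \leq (1-\eta_{\mathrm{max}}-\eta_{1,\mathrm{max}})\mathcal{K}_{\mathrm{min}} \leq \mathcal{K}_{\mathrm{min}}$ and $\mathcal{V}_{\mathrm{max}} \geq \mathcal{K}_{\mathrm{max}} + 1/\tau_{\mathrm{min}}$, this already establishes invariance of $\mathcal{C}^u$ under $\mathcal{F}$. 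The same identity also produces the elementary estimate $\cos\varphi_0 \,|ds_0/ds_1| \leq 1/(1 + \tau_{\mathrm{min}}\mathcal{K}_{\mathrm{min}})$, which converts the hypothesis $|\xi_0| \leq \kappa$ into the bound $|d\mathsf{c}_0/ds_1| \leq \kappa \mathsf{c}_0/(1 + \tau_{\mathrm{min}}\mathcal{K}_{\mathrm{min}})$ needed to control the cross-terms in the second stage.

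For the second stage, I substitute the partial derivatives \eqref{eqn_DbMapExt_Dpert} into
\begin{equation*}
  u_1 = \tfrac{\partial\varphi_1}{\partial\tilde{\varphi}_1}\, u_* + \tfrac{\partial\varphi_1}{\partial\mathsf{c}_0}\, \tfrac{d\mathsf{c}_0}{ds_1},
  \qquad
  \xi_1 = \tfrac{1}{\mathsf{c}_1\cos\varphi_1}\Big[\tfrac{\partial\mathsf{c}_1}{\partial\tilde{\varphi}_1}\, u_* + \tfrac{\partial\mathsf{c}_1}{\partial\mathsf{c}_0}\, \tfrac{d\mathsf{c}_0}{ds_1}\Big],
\end{equation*}
and exploit the identities $\sin\tilde{\varphi}_1/\sin\varphi_1 = \mathsf{c}_1/\mathsf{c}_0$ and $(1-\eta)\cos\tilde{\varphi}_1/\cos\varphi_1 = \mathsf{c}_1/\mathsf{c}_0$ from \eqref{eqn_updateRule_phi_c_implicit} to cast each coefficient in a manifestly $\eta$-perturbative form. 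The upper bound on $u_1$ then comes from $\partial\varphi_1/\partial\tilde{\varphi}_1 \leq 1/(1-\eta_{\mathrm{max}}) + \eta_{1,\mathrm{max}}$ times $u_* \leq \mathcal{K}_{\mathrm{max}} + 1/\tau_{\mathrm{min}}$, plus a cross-term of magnitude $\eta_{1,\mathrm{max}}\kappa/[(1-\eta_{\mathrm{max}})^2(1+\tau_{\mathrm{min}}\mathcal{K}_{\mathrm{min}})]$, which is precisely the lemma's bound on $\mathcal{V}_{\mathrm{max}}$; the lower bound on $u_1$ uses $\partial\varphi_1/\partial\tilde{\varphi}_1 \geq 1-\eta_{\mathrm{max}}-\eta_{1,\mathrm{max}}$ and $u_* \geq \mathcal{K}_{\mathrm{min}}$. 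A parallel calculation for $\xi_1$, in which the diagonal coefficient reproduces $\xi_0$ up to a factor at most one and the off-diagonal piece is controlled by $(2-\eta_{\mathrm{max}})\eta_{\mathrm{max}}/(1-\eta_{\mathrm{max}}) + \eta_{1,\mathrm{max}}$ times $u_*$, gives the lemma's bound on $\kappa$.

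The main obstacle is the algebraic self-consistency of this scheme: because $\hat{P}$ couples the $\mathsf{c}$-direction into the $\varphi$-direction through $\eta_1$, the quantity $\kappa$ appears on both sides of the inequalities for $\mathcal{V}_{\mathrm{min}}$ and $\mathcal{V}_{\mathrm{max}}$, while $\kappa$ is itself determined by the requirement $|\xi_1| \leq \kappa$. One must therefore solve a coupled scalar system for the triple $(\mathcal{V}_{\mathrm{min}}, \mathcal{V}_{\mathrm{max}}, \kappa)$, and the two conditions in \eqref{eqn_uConeCondition} are designed precisely so that such a choice exists with strictly positive $\mathcal{V}_{\mathrm{min}}$ and finite $\kappa$. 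Additional care is needed for the signs of $\eta_1$ (which need not be of one sign, unlike $\eta \geq 0$) and for keeping the denominators $1-\eta$ positive, but in every estimate the dominant contribution is the elastic cone-invariance inequality and the perturbative cross-terms are absorbed into explicit powers of $\eta_{\mathrm{max}}$ and $\eta_{1,\mathrm{max}}$.
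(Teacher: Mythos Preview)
Your proposal is correct and follows essentially the same approach as the paper's own proof: both exploit the two-stage factorization $\hat{\mathcal F}=\hat P\circ\hat{\mathcal F}_0$, pass to the slope variables $d\varphi/ds$ and $d\mathsf c/(\mathsf c\cos\varphi\,ds)$, insert the explicit partial derivatives \eqref{eqn_DbMapExt_Dpert}, and reduce invariance to the same three scalar inequalities, with condition \eqref{eqn_uConeCondition} used to guarantee that the coupled system for $(\mathcal V_{\min},\mathcal V_{\max},\kappa)$ is solvable. One small imprecision: you describe the diagonal coefficient in the $\xi_1$-estimate as ``at most one'', whereas the actual factor is $\frac{1+\eta_{1,\max}}{(1-\eta_{\max})(1+\tau_{\min}\mathcal K_{\min})}$, and it is precisely the \emph{strict} inequality (the first line of \eqref{eqn_uConeCondition}) that makes this factor strictly less than one and hence allows you to solve for a finite $\kappa$---but you correctly identify this self-consistency issue in your final paragraph.
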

\begin{proof}
  The invariance property of ${\mathcal C}^u$ is well-known \cite{MR2229799}.
  Let $\hat{x}_0$ be arbitrary, and suppose that
  $d\hat{x}_0 \in \hat{{\mathcal C}}^u_{\hat{x}_0}$. Denote by
  $\hat{x}_1$ the point $\hat{{\mathcal F}}(\hat{x}_0)$, and
  denote by $d\hat{x}_1$ the vector
  $\mathrm{D}\hat{{\mathcal F}}(\hat{x}_0)\,d\hat{x}_0$.
  Fix $0 \leq {\mathcal V}_0 < {\mathcal V}_{\mathrm{max}} \leq \infty$ and
  $0 \leq \kappa < \infty$.
  The explicit expression \eqref{eqn_DbMapExt}
  for the derivative of $\hat{{\mathcal F}}$ yields
  \begin{align*}
    \frac{ d\varphi_1 }{ ds_1 }
    &=
    \frac{ \partial\varphi_1 }{ \partial\tilde{\varphi}_1 }
    \,\Big[
    {\mathcal K}_1
    +
    \frac{ \cos\tilde{\varphi}_1 }{
    \tau_{01}
    +
    \frac{ \cos\varphi_0
    }{ {\mathcal K}_0 + \frac{ d\varphi_0 }{ ds_0 } }
    }
    \Big]
    -
    \frac{
    \mathsf c_0\,\cos\tilde{\varphi}_1
    \,\frac{ \partial\varphi_1 }{ \partial\mathsf c_0 }
    }{
    1
    +
    \tau_{01}
    \,\frac{
    {\mathcal K}_0 + \frac{ d\varphi_0 }{ ds_0 }
    }{ \cos\varphi_0 }
    }
    \,\frac{ d\mathsf c_0 }{
    \mathsf c_0\,\cos\varphi_0\,ds_0 }
    \\
    \frac{ d\mathsf c_1 }{
    \mathsf c_1\,\cos\varphi_1\,ds_1 }
    &=
    \frac{1}{ \mathsf c_1\,\cos\varphi_1 }
    \,\frac{ \partial\mathsf c_1 }{ \partial\tilde{\varphi}_1 }
    \,\Big[
    {\mathcal K}_1
    +
    \frac{ \cos\tilde{\varphi}_1 }{
    \tau_{01}
    +
    \frac{ \cos\varphi_0
    }{ {\mathcal K}_0 + \frac{ d\varphi_0 }{ ds_0 } }
    }
    \Big]
    -
    \frac{
    \frac{\mathsf c_0}{\mathsf c_1}
    \frac{ \partial\mathsf c_1 }{ \partial\mathsf c_0 }
    \,\frac{ \cos\tilde{\varphi}_1 }{ \cos\varphi_1 }
    }{
    1
    +
    \tau_{01}
    \,\frac{
    {\mathcal K}_0 + \frac{ d\varphi_0 }{ ds_0 }
    }{ \cos\varphi_0 }
    }
    \,\frac{ d\mathsf c_0 }{
    \mathsf c_0\,\cos\varphi_0\,ds_0 }
    \;,
  \end{align*}
  which allows us to compute the image $d\hat{x}_1$
  of $d\hat{x}_0$ under $\mathrm{D}\hat{{\mathcal F}}$. With
  $
  {\mathcal V}_{\mathrm{min}}
  \leq
  \tfrac{d\varphi_0}{ds_0}
  \leq
  {\mathcal V}_{\mathrm{max}}
  $
  and
  $
  |
  \tfrac{ d\mathsf c_0 }{
  \mathsf c_0\,\cos\varphi_0\,ds_0 }
  |
  \leq \kappa
  $
  it thus follows that
  \begin{align*}
    \frac{ d\varphi_1 }{ ds_1 }
    &\leq
    \Big|
    \frac{ \partial\varphi_1 }{ \partial\tilde{\varphi}_1 }
    \Big|
    \,\Big( {\mathcal K}_{\mathrm{max}} + \frac{1}{ \tau_{\mathrm{min}} } \Big)
    +
    \frac{
    |
    \mathsf c_0\,\cos\tilde{\varphi}_1
    \,\frac{ \partial\varphi_1 }{ \partial\mathsf c_0 }
    |
    }{ 1 + \tau_{\mathrm{min}}\,{\mathcal K}_{\mathrm{min}} }
    \,\kappa
    \\
    \frac{ d\varphi_1 }{ ds_1 }
    &\geq
    \Big|
    \frac{ \partial\varphi_1 }{ \partial\tilde{\varphi}_1 }
    \Big|
    \,{\mathcal K}_{\mathrm{min}}
    -
    \frac{
    |
    \mathsf c_0\,\cos\tilde{\varphi}_1
    \,\frac{ \partial\varphi_1 }{ \partial\mathsf c_0 }
    |
    }{ 1 + \tau_{\mathrm{min}}\,{\mathcal K}_{\mathrm{min}} }
    \,\kappa
    \\
    \Big|
    \frac{ d\mathsf c_1 }{
    \mathsf c_1\,\cos\varphi_1\,ds_1 }
    \Big|
    &\leq
    \Big|
    \frac{1}{ \mathsf c_1\,\cos\varphi_1 }
    \,\frac{ \partial\mathsf c_1 }{ \partial\tilde{\varphi}_1 }
    \Big|
    \,\Big( {\mathcal K}_{\mathrm{max}} + \frac{1}{ \tau_{\mathrm{min}} } \Big)
    +
    \frac{
    |
    \frac{\mathsf c_0}{\mathsf c_1}
    \frac{ \partial\mathsf c_1 }{ \partial\mathsf c_0 }
    \,\frac{ \cos\tilde{\varphi}_1 }{ \cos\varphi_1 }
    |
    }{ 1 + \tau_{\mathrm{min}}\,{\mathcal K}_{\mathrm{min}} }
    \,\kappa
    \;.
  \end{align*}
  Using the explicit expressions
  \eqref{eqn_DbMapExt_Dpert}
  for the various derivatives we see that
  \begin{align*}
    1 - \eta_{\mathrm{max}} - \eta_{1,\mathrm{max}}
    &\leq
    \Big|
    \frac{ \partial\varphi_1 }{ \partial\tilde{\varphi}_1 }
    \Big|
    \leq
    \frac{1 }{ 1 - \eta_{\mathrm{max}} } + \eta_{1,\mathrm{max}}
    \\
    \Big|
    \mathsf c_0\,\cos\tilde{\varphi}_1
    \,\frac{ \partial\varphi_1 }{ \partial\mathsf c_0 }
    \Big|
    &\leq
    \frac{ \eta_{1,\mathrm{max}} }{ ( 1 - \eta_{\mathrm{max}} )^2 }
    \\
    \Big|
    \frac{1}{ \mathsf c_1\,\cos\varphi_1 }
    \,\frac{ \partial\mathsf c_1 }{ \partial\tilde{\varphi}_1 }
    \Big|
    &\leq
    \frac{
    \frac{ ( 2 - \eta_{\mathrm{max}} )\,\eta_{\mathrm{max}} }{ 1 - \eta_{\mathrm{max}} }
    +
    \eta_{1,\mathrm{max}}
    }{ \sqrt{ 1 - \eta_{\mathrm{max}} } }
    \\
    \Big|
    \frac{\mathsf c_0}{\mathsf c_1}
    \frac{ \partial\mathsf c_1 }{ \partial\mathsf c_0 }
    \,\frac{ \cos\tilde{\varphi}_1 }{ \cos\varphi_1 }
    \Big|
    &\leq
    \frac{ 1 + \eta_{1,\mathrm{max}} }{ 1 - \eta_{\mathrm{max}} }
  \end{align*}
  and hence
  \begin{align*}
    \frac{ d\varphi_1 }{ ds_1 }
    &\leq
    \Big( \frac{1 }{ 1 - \eta_{\mathrm{max}} } + \eta_{1,\mathrm{max}} \Big)
    \,\Big( {\mathcal K}_{\mathrm{max}} + \frac{1}{ \tau_{\mathrm{min}} } \Big)
    +
    \frac{ \eta_{1,\mathrm{max}} }{ ( 1 - \eta_{\mathrm{max}} )^2 }
    \,\frac{ \kappa }{ 1 + \tau_{\mathrm{min}}\,{\mathcal K}_{\mathrm{min}} }
    \\
    \frac{ d\varphi_1 }{ ds_1 }
    &\geq
    ( 1 - \eta_{\mathrm{max}} - \eta_{1,\mathrm{max}} )
    \,{\mathcal K}_{\mathrm{min}}
    -
    \frac{ \eta_{1,\mathrm{max}} }{ ( 1 - \eta_{\mathrm{max}} )^2 }
    \,\frac{ \kappa }{ 1 + \tau_{\mathrm{min}}\,{\mathcal K}_{\mathrm{min}} }
    \\
    \Big|
    \frac{ d\mathsf c_1 }{
    \mathsf c_1\,\cos\varphi_1\,ds_1 }
    \Big|
    &\leq
    \frac{
    \frac{ ( 2 - \eta_{\mathrm{max}} )\,\eta_{\mathrm{max}} }{ 1 - \eta_{\mathrm{max}} }
    +
    \eta_{1,\mathrm{max}}
    }{ \sqrt{ 1 - \eta_{\mathrm{max}} } }
    \,\Big( {\mathcal K}_{\mathrm{max}} + \frac{1}{ \tau_{\mathrm{min}} } \Big)
    +
    \frac{ 1 + \eta_{1,\mathrm{max}} }{ 1 - \eta_{\mathrm{max}} }
    \,\frac{ \kappa }{ 1 + \tau_{\mathrm{min}}\,{\mathcal K}_{\mathrm{min}} }
    \;.
  \end{align*}
  Therefore, it is sufficient for the invariance of $\hat{{\mathcal C}}^u$
  to have $0 \leq \kappa$, and
  $0 \leq {\mathcal V}_{\mathrm{min}} \leq {\mathcal V}_{\mathrm{max}} \leq \infty$
  such that
  \begin{align*}
    {\mathcal V}_{\mathrm{max}}
    &\geq
    \Big( \frac{1 }{ 1 - \eta_{\mathrm{max}} } + \eta_{1,\mathrm{max}} \Big)
    \,\Big( {\mathcal K}_{\mathrm{max}} + \frac{1}{ \tau_{\mathrm{min}} } \Big)
    +
    \frac{ \eta_{1,\mathrm{max}} }{ ( 1 - \eta_{\mathrm{max}} )^2 }
    \,\frac{ \kappa }{ 1 + \tau_{\mathrm{min}}\,{\mathcal K}_{\mathrm{min}} }
    \\
    0
    \leq
    {\mathcal V}_{\mathrm{min}}
    &\leq
    ( 1 - \eta_{\mathrm{max}} - \eta_{1,\mathrm{max}} )
    \,{\mathcal K}_{\mathrm{min}}
    -
    \frac{ \eta_{1,\mathrm{max}} }{ ( 1 - \eta_{\mathrm{max}} )^2 }
    \,\frac{ \kappa }{ 1 + \tau_{\mathrm{min}}\,{\mathcal K}_{\mathrm{min}} }
    \\
    \kappa
    &\geq
    \frac{
    \frac{ ( 2 - \eta_{\mathrm{max}} )\,\eta_{\mathrm{max}} }{ 1 - \eta_{\mathrm{max}} }
    +
    \eta_{1,\mathrm{max}}
    }{ \sqrt{ 1 - \eta_{\mathrm{max}} } }
    \,\Big( {\mathcal K}_{\mathrm{max}} + \frac{1}{ \tau_{\mathrm{min}} } \Big)
    +
    \frac{ 1 + \eta_{1,\mathrm{max}} }{ 1 - \eta_{\mathrm{max}} }
    \,\frac{ \kappa }{ 1 + \tau_{\mathrm{min}}\,{\mathcal K}_{\mathrm{min}} }
    \;.
  \end{align*}
  By our first assumption
  \begin{equation*}
    0
    <
    1
    -
    \frac{ 1 + \eta_{1,\mathrm{max}} }{ 1 - \eta_{\mathrm{max}} }
    \,\frac{ 1 }{ 1 + \tau_{\mathrm{min}}\,{\mathcal K}_{\mathrm{min}} }
  \end{equation*}
  so that the condition on $\kappa$ takes on the
  equivalent form
  \begin{align*}
    \kappa
    &\geq
    \frac{
    1 + \tau_{\mathrm{min}}\,{\mathcal K}_{\mathrm{min}}
    }{
    \sqrt{ 1 - \eta_{\mathrm{max}} }
    }
    \,\frac{
    \frac{ 2 - \eta_{\mathrm{max}} }{ 1 - \eta_{\mathrm{max}} }\,\eta_{\mathrm{max}}
    +
    \eta_{1,\mathrm{max}}
    }{
    \tau_{\mathrm{min}}\,{\mathcal K}_{\mathrm{min}}
    -
    \frac{ \eta_{\mathrm{max}} + \eta_{1,\mathrm{max}} }{ 1 - \eta_{\mathrm{max}} }
    }
    \,\Big( {\mathcal K}_{\mathrm{max}} + \frac{1}{ \tau_{\mathrm{min}} } \Big)
    \;.
  \end{align*}
  In particular
  \begin{align*}
    \frac{ \eta_{1,\mathrm{max}} }{ ( 1 - \eta_{\mathrm{max}} )^2 }
    \,\frac{ \kappa }{ 1 + \tau_{\mathrm{min}}\,{\mathcal K}_{\mathrm{min}} }
    &\geq
    \frac{ \eta_{1,\mathrm{max}} }{ ( 1 - \eta_{\mathrm{max}})^{\frac{5}{2}} }
    \,\frac{
    \frac{ 2 - \eta_{\mathrm{max}} }{ 1 - \eta_{\mathrm{max}} }\,\eta_{\mathrm{max}}
    +
    \eta_{1,\mathrm{max}}
    }{
    \tau_{\mathrm{min}}\,{\mathcal K}_{\mathrm{min}}
    -
    \frac{ \eta_{\mathrm{max}} + \eta_{1,\mathrm{max}} }{ 1 - \eta_{\mathrm{max}} }
    }
    \,\Big( {\mathcal K}_{\mathrm{max}} + \frac{1}{ \tau_{\mathrm{min}} } \Big)
    \;.
  \end{align*}
  On the other hand, for ${\mathcal V}_{\mathrm{min}}$ to exists we also
  need to have
  \begin{align*}
    \frac{ \eta_{1,\mathrm{max}} }{ ( 1 - \eta_{\mathrm{max}} )^2 }
    \,\frac{ \kappa }{ 1 + \tau_{\mathrm{min}}\,{\mathcal K}_{\mathrm{min}} }
    <
    ( 1 - \eta_{\mathrm{max}} - \eta_{1,\mathrm{max}} )
    \,{\mathcal K}_{\mathrm{min}}
  \end{align*}
  which is possible due to our second assumption, and thus the three
  conditions stated Lemma~\ref{lem_uCones_invariance}
  can be simultaneously satisfied.
\end{proof}

For any $d\hat{x} \in \hat{{\mathcal C}}^u_{\hat{x}}$,
$dx \in {\mathcal C}^u_{x}$
we define its
Euclidean norm and its adapted norm by
\begin{equation}
  \label{eqn_def_norm_uVectors}
  \begin{split}
  \left\Arrowvert \, d\hat{x} \, \right\Arrowvert
  &=
  \sqrt{ ds^2 + d\varphi^2 + d\mathsf c^2 }
  \;,\quad
  \left| \, d\hat{x} \, \right|_*
  =
  \cos\varphi\,| ds |
  \\
  \left\Arrowvert \, dx \, \right\Arrowvert
  &=
  \sqrt{ ds^2 + d\varphi^2 }
  \;,\quad
  \left| \, dx \, \right|_*
  =
  \cos\varphi\,| ds |
  \end{split}
  \;,
\end{equation}
respectively. In particular we have
\begin{equation}
  \label{eqn_norm_uVectors_equivalence}
  \begin{split}
    \frac{ \sqrt{ 1 + {\mathcal V}_{\mathrm{min}}^2 } }{ \cos\varphi }
    \,\left| \, d\hat{x} \, \right|_*
    \leq
    \left\Arrowvert \, d\hat{x} \, \right\Arrowvert
    &\leq
    \frac{
    \sqrt{ 1 + {\mathcal V}_{\mathrm{max}}^2 + \kappa^2\,\mathsf c^2\,\cos^2\varphi }
    }{ \cos\varphi }
    \,\left| \, d\hat{x} \, \right|_*
    \\
    \frac{ \sqrt{ 1 + {\mathcal V}_{\mathrm{min}}^2 } }{ \cos\varphi }
    \,\left| \, dx \, \right|_*
    \leq
    \left\Arrowvert \, dx \, \right\Arrowvert
    &\leq
    \frac{
    \sqrt{ 1 + {\mathcal V}_{\mathrm{max}}^2 }
    }{ \cos\varphi }
    \,\left| \, dx \, \right|_*
    \;,
  \end{split}
  \;,
\end{equation}
which shows that the two norms are (locally) equivalent on
$\hat{{\mathcal C}}^u$. 
These are the natural generalizations of the
corresponding standard concepts in the theory of dispersing
billiards \cite{MR2229799}. The following result is
an immediate consequence of the definition of 
$\left| \, \cdot \, \right|_*$.

\begin{lemma}[Uniform expansion of $\hat{{\mathcal C}}^u$ and ${\mathcal C}^u$]
  \label{lem_uCones_expansion}
  Under the assumptions of Lemma~\ref{lem_uCones_invariance}
  \begin{equation*}
    \left| \,  \mathrm{D}\hat{{\mathcal F}}(\hat{x})\,d\hat{x}  \, \right|_*
    \geq
    \Lambda
    \,\left| \, d\hat{x} \, \right|_*
    \quad\text{and}\quad
    \left| \,  \mathrm{D}{\mathcal F}(x)\,dx  \, \right|_*
    \geq
    \Lambda
    \,\left| \, dx \, \right|_*
  \end{equation*}
  with
  \begin{equation}
    \label{eqn_def_uConeRate}
    \Lambda
    =
    ( 1 - \eta_{\mathrm{max}} )
    \,[ 1 + \tau_{\mathrm{min}} \,( {\mathcal K}_{\mathrm{min}} + {\mathcal V}_{\mathrm{min}} ) ]
  \end{equation}
  holds for
  any $\hat{x}$, $d\hat{x} \in \hat{{\mathcal C}}^u_{\hat{x}}$
  and any $x$,  $dx \in {\mathcal C}^u_{x}$.
\end{lemma}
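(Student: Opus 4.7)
The plan is to observe that the adapted norm $|d\hat{x}|_* = \cos\varphi\,|ds|$ depends only on the $ds$-component of the tangent vector, so the estimate reduces to comparing $\cos\varphi_1\,|ds_1|$ with $\cos\varphi_0\,|ds_0|$. Moreover, inspecting the block structure of $\mathrm{D}\hat{{\mathcal F}}$ at the end of \eqref{eqn_DbMapExt}, the $s$-component is passed through unchanged by the post-collision perturbation factor, so $ds_1$ is read off directly from $\mathrm{D}{\mathcal F}$ via \eqref{eqn_DbMapExt_DbMap}, namely
\begin{equation*}
  ds_1
  =
  -\frac{1}{\cos\tilde\varphi_1}
  \Big[\cos\varphi_0 + \tau_{01}\bigl({\mathcal K}_0 + \tfrac{d\varphi_0}{ds_0}\bigr)\Big]\,ds_0 .
\end{equation*}

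Next I would pick up the factor $\cos\varphi_1/\cos\tilde\varphi_1$ coming from the adapted norm on the image and convert it using the first identity of \eqref{eqn_updateRule_phi_c_implicit}, which gives
$\cos\varphi_1/\cos\tilde\varphi_1 = (1-\eta)\,\mathsf c_0/\mathsf c_1$. Since
$\mathsf c_1 \le \mathsf c_0$ (immediate from the square-root in \eqref{eqn_updateRule_phi_c_explicit}, as $(1-\eta)^2\le 1$), this ratio is bounded below by $1-\eta \ge 1 - \eta_{\mathrm{max}}$. Combining the two steps yields
\begin{equation*}
  \cos\varphi_1\,|ds_1|
  \;\ge\;
  (1-\eta_{\mathrm{max}})\,
  \Big[\cos\varphi_0 + \tau_{01}\bigl({\mathcal K}_0 + \tfrac{d\varphi_0}{ds_0}\bigr)\Big]\,|ds_0|.
\end{equation*}

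Now I use the cone assumption $\tfrac{d\varphi_0}{ds_0}\ge {\mathcal V}_{\mathrm{min}}$ together with ${\mathcal K}_0\ge {\mathcal K}_{\mathrm{min}}$ and $\tau_{01}\ge \tau_{\mathrm{min}}$ (recall dispersing means ${\mathcal K}_0\ge 0$, so the bracket has the sign of $ds_0$) to bound the bracket from below by $\cos\varphi_0 + \tau_{\mathrm{min}}({\mathcal K}_{\mathrm{min}}+{\mathcal V}_{\mathrm{min}})$. The elementary observation $\cos\varphi_0\le 1$ then rewrites this as
$\cos\varphi_0\,[1+\tau_{\mathrm{min}}({\mathcal K}_{\mathrm{min}}+{\mathcal V}_{\mathrm{min}})/\cos\varphi_0]\ge \cos\varphi_0\,[1+\tau_{\mathrm{min}}({\mathcal K}_{\mathrm{min}}+{\mathcal V}_{\mathrm{min}})]$, which is exactly the factor $\Lambda$ defined in \eqref{eqn_def_uConeRate}.

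The estimate for ${\mathcal F}$ on $dx\in{\mathcal C}^u_x$ is the projection of the same calculation (and is of course classical, cf.~\cite{MR2229799}). There is no real obstacle here: the only place where the inelastic perturbation could spoil expansion is the ratio $\cos\varphi_1/\cos\tilde\varphi_1$, which is controlled cleanly by the dissipation bound $\mathsf c_1\le\mathsf c_0$ and costs exactly the factor $(1-\eta_{\mathrm{max}})$; all invariance conditions on ${\mathcal V}_{\mathrm{min}},{\mathcal V}_{\mathrm{max}},\kappa$ have already been absorbed in Lemma~\ref{lem_uCones_invariance}, so no further smallness of $\eta_{\mathrm{max}},\eta_{1,\mathrm{max}}$ is needed at this step.
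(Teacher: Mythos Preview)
Your argument is correct and is precisely the computation the paper has in mind when it says the lemma ``is an immediate consequence of the definition of $\left|\,\cdot\,\right|_*$'': read off $ds_1$ from the first row of $\mathrm{D}{\mathcal F}$, convert $\cos\varphi_1/\cos\tilde\varphi_1$ via \eqref{eqn_updateRule_phi_c_implicit} and the monotonicity $\mathsf c_1\le\mathsf c_0$, and then use the cone bound $d\varphi_0/ds_0\ge{\mathcal V}_{\mathrm{min}}$ together with $\cos\varphi_0\le 1$. The paper gives no further details, so there is nothing to compare beyond this.
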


In the limit $\eta_{\mathrm{max}} \to 0$ and $\eta_{1,\mathrm{max}}\to 0$
the result of Lemma~\ref{lem_uCones_invariance} allows for the choice
of parameters
$\kappa = 0$,
$ {\mathcal V}_{\mathrm{min}} = {\mathcal K}_{\mathrm{min}} $,
$ {\mathcal V}_{\mathrm{max}} = {\mathcal K}_{\mathrm{max}} + \frac{1}{ \tau_{\mathrm{min}} } $.
In this case the minimal expansion rate
$\Lambda$, as defined in
Lemma~\ref{lem_uCones_expansion}, take on the form
$
\Lambda
=
1 + 2\,\tau_{\mathrm{min}}\,{\mathcal K}_{\mathrm{min}}
$.
These are the unstable cone field ${\mathcal C}^u$ for ${\mathcal F}$
and the corresponding expansion rate (with respect to $\left| \, \cdot \, \right|_*$)
as they are usually used in the theory of hyperbolic billiards
\cite{MR2229799}.

Throughout we will assume that
$\eta_{\mathrm{max}}$, $\eta_{1,\mathrm{max}}$, $\eta_{2,\mathrm{max}}$ and the parameters
${\mathcal V}_{\mathrm{min}}$, ${\mathcal V}_{\mathrm{max}}$, $\kappa$ are chosen
such that:
\begin{assumption}
  \label{assume_assumption1}
  The parameters $\kappa$, ${\mathcal V}_{\mathrm{min}}$, ${\mathcal V}_{\mathrm{max}}$
  chosen so that equality holds in Lemma~\ref{lem_uCones_invariance},
  and the cone fields $\hat{{\mathcal C}}^u$, ${\mathcal C}^u$ are invariant with
  $\Lambda > 1 + \tau_{\mathrm{min}} {\mathcal K}_{\mathrm{min}}$.
\end{assumption}

\begin{remark}
  As was already pointed out earlier,
  by Lemma~\ref{lem_uCones_invariance} and Lemma~\ref{lem_uCones_expansion}
  Assumption~\ref{assume_assumption1}
  can always be realized as long as
  $\eta_{\mathrm{max}}$, $\eta_{1,\mathrm{max}}$ are sufficiently small
  compared to geometric parameters of the billiard table $Q$.
  The smallness assumption on $\eta_{2,\mathrm{max}}$ will be imposed
  later on to ensure certain regularity of $\hat{{\mathcal F}}$.
\end{remark}

\section{Dynamics of unstable curves}
\label{sect_ucurves}

A curve $\hat{\gamma}$ in $\hat{{\mathcal M}}$ is called an unstable
curve if all its tangent vectors are in
the unstable cone $\hat{{\mathcal C}}^u_{\hat{\gamma}}$. This is in complete
analogy to the corresponding concept in the theory of
hyperbolic billiards \cite{MR2229799}. In fact, this is more
than a formal analogy since
Lemma~\ref{lem_uCones_invariance} and Lemma~\ref{lem_uCones_expansion}
show that the projection $\gamma = \Pi(\hat{\gamma})$ of
any unstable curve $\hat{\gamma}$ in $\hat{{\mathcal M}}$ is
an unstable curve in ${\mathcal M}$.
And since the dynamics of unstable curves is central in the
study for hyperbolic billiards \cite{MR2229799} we
derive the corresponding results for the dynamics of
unstable curves under iterations of $\hat{{\mathcal F}}$.

Let $\hat{\gamma}$ be some unstable curve in $\hat{{\mathcal M}}$.
By its very definition \eqref{eqn_def_uConeExt} we see
that $\hat{\gamma}$ can be parametrized in terms of $s$.
Furthermore, the assumed bound
$
| \frac{d\mathsf c}{ \mathsf c\,\cos\varphi\,ds } |
\leq
\kappa
$
on $\hat{\gamma}$ clearly implies the uniform bound
\begin{equation}
  \label{eqn_uCurveExt_variationC}
  \sup_{ \hat{x}', \hat{x}'' \in \hat{\gamma} }
  \frac{ \mathsf c' }{ \mathsf c'' }
  \leq
  e^{ \kappa \,( | \partial Q | \operatorname{\wedge} \frac{ \pi }{ {\mathcal V}_{\mathrm{min}} }) }
  =
  e^{ (\eta_{\mathrm{max}} + \eta_{1,\mathrm{max}} )\times {\mathrm{const}} }
\end{equation}
on the variation of $\mathsf c$ along $\hat{\gamma}$.
And since the cone field $\hat{{\mathcal C}}^u$ is invariant under $\hat{{\mathcal F}}$
it follows that images of unstable curves remain unstable curves
with the same uniform bounds, and projecting any of these unstable
curves by $\Pi$ from $\hat{{\mathcal M}}$ to ${\mathcal M}$ yields an
unstable curve for the standard billiard map ${\mathcal F}$ on ${\mathcal M}$.
\begin{figure}[ht!]
  \centering
  \includegraphics[height=5cm]{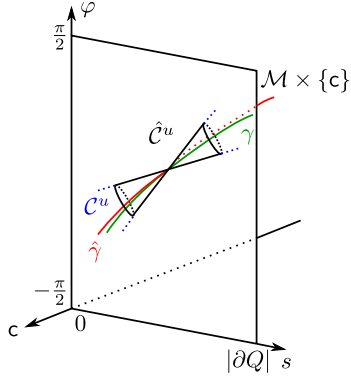}
  \caption{An illustration of the cone fields ${\mathcal C}^u$ and $\hat{{\mathcal C}}^u$
  as well as unstable curves $\hat{\gamma}$ and their projections
  $\gamma$ onto ${\mathcal M}$ (or in fact
  ${\mathcal M} \times \{\mathsf c\}$). The cones $\hat{{\mathcal C}}^u$
  has a narrow opening in the $\mathsf c$--coordinate, and
  hence unstable curves $\hat{\gamma}$ almost agree with their
  projections $\gamma$.}
  \label{fig_cones}
\end{figure}
An illustration of the unstable cone field and unstable curves, as
well as their projections is given in Fig.~\ref{fig_cones}.
As our standing assumption is $\eta_{2,\mathrm{max}} < \infty$ (in fact
it will be eventually assumed to very very small) we have
bounded second derivatives of $P$. Thus
the curvature of $\hat{{\mathcal F}}(\hat{\gamma})$ is bounded as long as
the curvature of $\hat{\gamma}$ is bounded. For planar
hyperbolic billiards \cite{MR2197961} shows that
the curvature of unstable curves is uniformly bounded
under iteration by the standard billiard map.
The map $\hat{{\mathcal F}}$ we consider here is essentially a small
perturbation of the standard billiard ${\mathcal F}$. Indeed,
a straightforward
adaptation of the proofs in \cite{MR2197961}, as was done
also in similar settings \cite{MR2389891,MR2737493}, we obtain
the following:
\begin{lemma}[Uniform curvature bounds]
  \label{lem_curvature_bounds}
  Suppose that the curvature of an unstable curve $\hat{\gamma}$
  is bounded by some constant $C_0$. Then uniformly in $n \geq 1$
  the curvature of $\hat{{\mathcal F}}^n(\hat{\gamma})$ is bounded by
  some constant
  $
  C
  $, which depends on $C_0$, $Q$, and is independent of
  $\eta_{\mathrm{max}}$, $\eta_{1,\mathrm{max}}$, $\eta_{2,\mathrm{max}}$
  provided that they are chosen less than some
  $\eta_{\mathrm{max}}^*$, $\eta_{1,\mathrm{max}}^*$, $\eta_{2,\mathrm{max}}^*$,
  respectively.
\end{lemma}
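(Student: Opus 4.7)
The plan is to mimic the classical curvature-recursion argument for dispersing billiards (as in \cite{MR2197961}) in the fibered setting, exploiting the fact that $\hat{\mathcal F}=\hat P\circ\hat{\mathcal F}_0$ is a small perturbation of $\mathcal F$ on each fiber $\{\mathsf c=\text{const}\}$. Since $d\varphi/ds$ is bounded away from $0$ and $\infty$ on $\hat{\mathcal C}^u$, and $|d\mathsf c/(\mathsf c\cos\varphi\,ds)|\leq\kappa$ with $\kappa=O(\eta_{\mathrm{max}}+\eta_{1,\mathrm{max}})$, any unstable curve can be parametrized as $\hat\gamma(s)=(s,\varphi(s),\mathsf c(s))$ on a subarc of $\partial Q$, and its curvature is equivalent, up to uniform multiplicative constants, to $\max(|\varphi''(s)|,|\mathsf c''(s)|)$. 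The first step is therefore to set up this parametrization and express the curvature of the image $\hat{\mathcal F}(\hat\gamma)$ in terms of $(\varphi'',\mathsf c'')$ of $\hat\gamma$ via the chain rule applied to the two-stage representation \eqref{eqn_def_bMapExt_rep}.

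The second step is to read off the recursion. For the $(s,\varphi)$-component, the classical calculation in \cite{MR2197961} applied to $\gamma=\Pi(\hat\gamma)$ gives a formula of the shape
\begin{equation*}
  \mathcal B_{n+1}
  \;\leq\;
  \frac{1}{\Lambda_n^2}\,\mathcal B_n \;+\; \beta_n,
\end{equation*}
where $\mathcal B_n$ denotes the relevant curvature quantity after $n$ iterates, $\Lambda_n\geq\Lambda>1+\tau_{\mathrm{min}}\mathcal K_{\mathrm{min}}$ is the expansion factor from Lemma~\ref{lem_uCones_expansion}, and $\beta_n$ is a bounded inhomogeneous term depending only on $\mathcal K_{\mathrm{min}},\mathcal K_{\mathrm{max}},\tau_{\mathrm{min}}$ and on $C^3$-bounds for $\partial Q$. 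The contribution of $\hat P$ to the curvature is controlled by the second derivatives of $(\tilde\varphi_1,\mathsf c_0)\mapsto(\varphi_1,\mathsf c_1)$, which by \eqref{eqn_updateRule_phi_c_explicit} are uniformly bounded in terms of $\eta_{\mathrm{max}},\eta_{1,\mathrm{max}},\eta_{2,\mathrm{max}}$. Provided these three quantities are below thresholds $\eta_{\mathrm{max}}^*,\eta_{1,\mathrm{max}}^*,\eta_{2,\mathrm{max}}^*$, one therefore obtains a coupled recursion
\begin{equation*}
  \hat{\mathcal B}_{n+1}
  \;\leq\;
  \bigl(\tfrac{1}{\Lambda^2}+O(\eta_{2,\mathrm{max}})\bigr)\hat{\mathcal B}_n \;+\; \hat\beta,
\end{equation*}
with contraction rate strictly less than $1$ and with $\hat\beta$ bounded uniformly. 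Iterating yields $\hat{\mathcal B}_n\leq\hat\beta/(1-\alpha)+\alpha^n\hat{\mathcal B}_0$ for the contraction rate $\alpha<1$, which is exactly the uniform bound $C$ claimed.

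The main obstacle is the mixed-fiber contribution $\mathsf c''(s)$: the classical argument only tracks the $(s,\varphi)$-curvature, while here one must verify that the $\mathsf c$-component does not destroy the contraction. This is handled by observing two facts. First, the $\mathsf c$-opening of the cone $\hat{\mathcal C}^u$ is of order $\kappa\cdot\mathsf c\cos\varphi$, so the $\mathsf c$-component of the tangent vector is a small perturbation of the $(s,\varphi)$-component; combined with the formula \eqref{eqn_def_bMapExtZero} ($\hat{\mathcal F}_0$ acts as the identity in $\mathsf c$), the free flight does not mix $\mathsf c''$ into $\varphi''$. Second, the coupling introduced by $\hat P$ involves the derivatives $\partial\varphi_1/\partial\mathsf c_0$ and $\partial\mathsf c_1/\partial\tilde\varphi_1$ from \eqref{eqn_DbMapExt_Dpert}, both of which are $O(\eta_{\mathrm{max}}+\eta_{1,\mathrm{max}})$. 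Hence the coupled $2\times 2$ linear recursion for $(\varphi'',\mathsf c'')$ has a matrix whose spectral radius differs from $\Lambda^{-2}$ by $O(\eta_{2,\mathrm{max}})$, and choosing the three threshold parameters small enough preserves the contraction. The proof then concludes, following \cite{MR2197961,MR2389891,MR2737493}, by showing this uniform-contraction recursion implies the stated $n$-independent curvature bound.
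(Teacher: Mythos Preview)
Your proposal is correct and matches the paper's approach exactly: the paper does not actually supply a proof but simply states that the result follows by ``a straightforward adaptation of the proofs in \cite{MR2197961}, as was done also in similar settings \cite{MR2389891,MR2737493}''. Your sketch---parametrizing $\hat\gamma$ by $s$, running the classical curvature recursion of \cite{MR2197961} on the $(s,\varphi)$-component, and then controlling the additional $\mathsf c$-fiber coupling through the smallness of the first and second derivatives of $\hat P$---is precisely such an adaptation, and the contraction argument you outline is the standard one. One minor remark: the perturbation of the contraction rate is more accurately $O(\eta_{\mathrm{max}}+\eta_{1,\mathrm{max}}+\eta_{2,\mathrm{max}})$ rather than $O(\eta_{2,\mathrm{max}})$ alone, since the first-derivative coupling terms in \eqref{eqn_DbMapExt_Dpert} also enter the recursion coefficient; but this does not affect the conclusion.
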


Due to Lemma~\ref{lem_curvature_bounds} we will throughout make the
following assumption:
\begin{assumption}
  \label{assume_curvatureBounds}
  All unstable curves $\hat{\gamma}$ considered have a universally
  bounded curvature, uniformly in the parameters
  $\eta_{\mathrm{max}}\leq\eta_{\mathrm{max}}^*$,
  $\eta_{1,\mathrm{max}}\leq\eta_{1,\mathrm{max}}^*$,
  $\eta_{2,\mathrm{max}} \leq\eta_{2,\mathrm{max}}^*$.
\end{assumption}

In order to control distortions of unstable curves under
iterations by $\hat{{\mathcal F}}$ we make again use of the fact that
$\hat{{\mathcal F}}$ is a perturbation of the standard billiard map ${\mathcal F}$,
whose distortion estimates are well-understood, e.g.
\cite{MR2229799}.
The natural generalization of the so-called the homogeneity
strips used in hyperbolic billiards \cite{MR1138952,MR1071936} are
\begin{equation}
  \label{eqn_def_HstripExt}
  \begin{split}
    \forall\; k \geq k_0
    \;:
    \quad
    \hat{{\mathbb H}}_k
    &=
    \Big\{
    (s, \varphi, \mathsf c)
    \operatorname{:}
    \frac{\pi}{2} - \frac{1}{k^2}
    <
    \varphi
    <
    \frac{\pi}{2} - \frac{1}{(k+1)^2}
    \Big\}
    \\
    \hat{{\mathbb H}}_0
    &=
    \Big\{
    (s, \varphi, \mathsf c)
    \operatorname{:}
    -\frac{\pi}{2} + \frac{1}{k_0^2}
    <
    \varphi
    <
    \frac{\pi}{2} - \frac{1}{k_0^2}
    \Big\}
    \\
    \forall\; k \geq k_0
    \;:
    \quad
    \hat{{\mathbb H}}_{-k}
    &=
    \Big\{
    (s, \varphi, \mathsf c)
    \operatorname{:}
    -\frac{\pi}{2} + \frac{1}{(k+1)^2}
    <
    \varphi
    <
    -\frac{\pi}{2} + \frac{1}{k^2}
    \Big\}
    \;,
  \end{split}
\end{equation}
which we will call homogeneity surfaces.
The value of $k_0 \geq 1$ is determined by the one-step expansion
property \cite{MR2229799} stated in Lemma~\ref{lem_oneStep_expansion}
below.

In order to make use of the homogeneity surfaces
when estimating distortions of images of unstable
curves under iterations of $\hat{{\mathcal F}}$ it is convenient
to follow standard practice of hyperbolic billiards
and introduce additional
singularities for $\hat{{\mathcal F}}$ (i.e. artificial singularities
in addition to the ones present in $\hat{{\mathcal F}}$ due to
${\mathcal F}$) as follows:
\begin{itemize}
  \item
    An unstable curve $\hat{\gamma}$ in $\hat{{\mathcal M}}$
    which does not cross any of the homogeneity surfaces
    $(\hat{{\mathbb H}}_k)_k$ is called a weakly homogeneous unstable curve.
  \item
    The surfaces $(\hat{{\mathbb H}}_k)_k$ act as additional
    singularities of $\hat{{\mathcal F}}$, hence if any of the
    image under $\hat{{\mathcal F}}$
    of any weakly homogeneous unstable curve $\hat{\gamma}$
    crossing any of the
    $(\hat{{\mathbb H}}_k)_k$ will be cut accordingly into
    weakly homogeneous unstable curves.
\end{itemize}
In particular, the image under $\hat{{\mathcal F}}$ of any
weakly homogeneous unstable curve
is a finite or countable union of
weakly homogeneous unstable curves.

For any (weakly homogeneous) unstable curves
$\hat{\gamma}_0$, $\hat{\gamma}_1$
with $\hat{{\mathcal F}}(\hat{\gamma}_0) = \hat{\gamma}_1$
and any $\hat{x}_0 \in \hat{\gamma}_0$
we denote by ${\mathcal J}_{\hat{\gamma}_0}\hat{{\mathcal F}}(\hat{x}_0)$
the Jacobian of $\hat{{\mathcal F}} \colon \hat{\gamma}_0 \to \hat{\gamma}_1$
at $\hat{x}_0$. Similarly, we denote by
${\mathcal J}_{\hat{\gamma}_1}\hat{{\mathcal F}}^{-1}(\hat{x}_1)$
the Jacobian of $\hat{{\mathcal F}}^{-1}$.
Since we already know that unstable curves $\hat{\gamma}$
in ${\mathcal M}$ are very close to their projections
$\gamma = \Pi \hat{\gamma}$ in ${\mathcal M}$, the following
uniform distortion bound follows from the corresponding standard
arguments for hyperbolic billiards
\cite{MR2229799,MR2737493}:
\begin{lemma}[Uniform distortion bounds]
  \label{lem_distortion_bounds}
  For any choice of
  $\eta_{\mathrm{max}}^*$, $\eta_{1,\mathrm{max}}^*$, $\eta_{2,\mathrm{max}}^*$
  there exists a constant $C$ such that for every
  weakly homogeneous unstable curves
  $ \hat{\gamma}_0 $ and $ \hat{\gamma}_1 = \hat{{\mathcal F}}( \hat{\gamma}_0 ) $
  \begin{equation*}
    \sup_{ \hat{x}_1 \in \hat{\gamma}_1 }
    \Big|
    \frac{d}{ds_1}
    \log {\mathcal J}_{\hat{\gamma}_1} {\mathcal F}^{-1}(\hat{x}_1)
    \Big|
    \leq
    \frac{C}{|\hat{\gamma}_1|^{\frac{2}{3}} }
  \end{equation*}
  holds, uniformly for in
  $\eta_{\mathrm{max}}$, $\eta_{1,\mathrm{max}}$, $\eta_{2,\mathrm{max}}$
  provided that they are chosen less than
  $\eta_{\mathrm{max}}^*$, $\eta_{1,\mathrm{max}}^*$, $\eta_{2,\mathrm{max}}^*$,
  respectively.
\end{lemma}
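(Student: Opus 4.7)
The plan is to reduce the bound for $\hat{\mathcal{F}}$ to the analogous (already known) bound for the standard billiard map $\mathcal{F}$ using the factorization $\hat{\mathcal{F}} = \hat{P} \circ \hat{\mathcal{F}}_0$ from \eqref{eqn_def_bMapExtZero}. Since
$$
\mathcal{J}_{\hat{\gamma}_1} \hat{\mathcal{F}}^{-1}
=
\bigl( \mathcal{J}_{\hat{\gamma}_0'} \hat{\mathcal{F}}_0^{-1} \bigr) \cdot \bigl( \mathcal{J}_{\hat{\gamma}_1} \hat{P}^{-1} \bigr),
\qquad \hat{\gamma}_0' = \hat{P}^{-1}(\hat{\gamma}_1),
$$
the logarithmic $s_1$-derivative splits into two contributions, which I would estimate separately.

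First I would treat the $\hat{\mathcal{F}}_0$ piece. Because $\hat{\mathcal{F}}_0$ acts trivially on $\mathsf{c}$, the Jacobian along $\hat{\gamma}_0'$ equals the Jacobian of $\mathcal{F}$ along its projection $\Pi(\hat{\gamma}_0')$ times a factor accounting for the slope $d\mathsf{c}/ds$ inside the cone \eqref{eqn_def_uConeExt}. The cone condition and \eqref{eqn_uCurveExt_variationC} force this slope factor to be smooth with uniformly bounded derivative, so it contributes an $O(1)$ term. The remaining factor is exactly the billiard Jacobian, for which the classical distortion bound of Chernov–Markarian \cite{MR2229799} gives $C\,|\Pi(\hat{\gamma}_1)|^{-2/3}$; the hypothesis that $\hat{\gamma}_0$ is weakly homogeneous guarantees that neither $\hat{\gamma}_0$ nor $\hat{\gamma}_1$ crosses any homogeneity surface $\hat{\mathbb{H}}_k$, which (as in the elastic theory) is precisely what tames the $\cos\varphi$--singularity to give the $2/3$-exponent. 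By the norm equivalence \eqref{eqn_norm_uVectors_equivalence} and \eqref{eqn_uCurveExt_variationC} one has $|\Pi(\hat{\gamma}_1)| \asymp |\hat{\gamma}_1|$ uniformly, so this contribution is bounded by $C\,|\hat{\gamma}_1|^{-2/3}$.

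Next I would handle the $\hat{P}$ piece. The map $\hat{P}$ is given explicitly by \eqref{eqn_updateRule_phi_c_explicit}; under Assumption~\ref{assume_curvatureBounds} and the standing assumption $\eta_{2,\mathrm{max}} \leq \eta_{2,\mathrm{max}}^*$, the derivatives in \eqref{eqn_DbMapExt_Dpert} are smooth functions of $(\tilde{\varphi}_1, \mathsf{c}_0)$ whose first $s_1$--derivatives remain uniformly bounded on each homogeneity surface (the only potential singularity, $\cos\tilde{\varphi}_1 \to 0$, is excluded by weak homogeneity). Hence $\log \mathcal{J}_{\hat{\gamma}_1} \hat{P}^{-1}$ has an $s_1$--derivative bounded uniformly by a constant, which is absorbed into the right-hand side since $|\hat{\gamma}_1|$ is a priori bounded above.

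The only genuinely delicate point—and thus the main obstacle—is verifying that the $\hat{P}$--perturbation does not degrade the sharp $2/3$ exponent near tangential collisions. This requires checking that the derivatives in \eqref{eqn_DbMapExt_Dpert} are not merely bounded but also have bounded second derivatives when restricted to a single homogeneity strip $\hat{\mathbb{H}}_k$, with bounds that are uniform in $k$. This uses the fact that $\eta, \eta_1, \eta_2$ are bounded and that on $\hat{\mathbb{H}}_k$ one has $\cos\tilde{\varphi}_1 \sim k^{-2}$ while the width of $\hat{\mathbb{H}}_k$ is $\sim k^{-3}$, so the balance leading to the $|\hat{\gamma}_1|^{-2/3}$ bound (see \cite[Sect.~5.8]{MR2229799} and the analogous adaptations in \cite{MR2389891,MR2737493,MR2499824}) is preserved. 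With this verification, the proof reduces verbatim to the classical one.
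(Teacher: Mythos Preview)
Your proposal is correct and aligns with the paper's approach: the paper gives no detailed proof but simply asserts that the bound follows from the standard arguments for hyperbolic billiards in \cite{MR2229799,MR2737493}, using that unstable curves $\hat{\gamma}$ are uniformly close to their projections $\Pi\hat{\gamma}$. Your factorization $\hat{\mathcal{F}} = \hat{P}\circ\hat{\mathcal{F}}_0$ and separate treatment of the billiard piece and the bounded-$C^2$ perturbation $\hat{P}$ is exactly how one fleshes out those ``standard arguments,'' and your check that the $\hat{P}$--contribution does not spoil the $2/3$ exponent on homogeneity strips is the one point requiring care.
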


The key result in the study of hyperbolic billiards is the
so-called one-step expansion \cite{MR2229799} property. Since
unstable curves $\hat{\gamma}$ are uniformly close to their
projections $\gamma = \Pi \hat{\gamma}$ also this property of
${\mathcal F}$ readily carries over to our setting of $\hat{{\mathcal F}}$.
To be precise, let $\hat{\gamma}$ be a weakly homogeneous
unstable curve.
Recall that $\hat{{\mathcal F}}(\hat{\gamma})$ is cut into
several connected component due to the presence of singularities
in ${\mathcal F}$, and due to the additional singularities introduced by
the homogeneity surfaces. For any connected component
$\hat{\gamma}_i$ of $\hat{{\mathcal F}}(\hat{\gamma})$ denote by
$\lambda_i$ the minimal expansion of $\hat{{\mathcal F}}$
on $\hat{{\mathcal F}}^{-1} \hat{\gamma}_i$ in terms of the adapted
metric $\left| \, \cdot \, \right|_*$.
\begin{lemma}[Uniform one-step expansion]
  \label{lem_oneStep_expansion}
  For any $\eta_{\mathrm{max}}^*$, $\eta_{1,\mathrm{max}}^*$, $\eta_{2,\mathrm{max}}^*$
  \begin{equation*}
    \liminf_{l \to 0}
    \sup_{ \eta_{\mathrm{max}}, \eta_{1,\mathrm{max}}, \eta_{2,\mathrm{max}} }
    \sup_{\hat{\gamma} \operatorname{:} |\hat{\gamma}| < l }
    \sum_i \lambda_i
    <
    1
  \end{equation*}
  where the supremum is taken over all weakly unstable curves
  $\hat{\gamma}$ and all
  $\eta_{\mathrm{max}}$, $\eta_{1,\mathrm{max}}$, $\eta_{2,\mathrm{max}}$
  less than
  $\eta_{\mathrm{max}}^*$, $\eta_{1,\mathrm{max}}^*$, $\eta_{2,\mathrm{max}}^*$,
  respectively.
\end{lemma}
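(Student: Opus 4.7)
The plan is to reduce the claim to the classical one-step expansion property for the unperturbed dispersing billiard map $\mathcal F$ from \cite{MR2229799}, via a perturbative argument driven by the smallness of $\eta_{\mathrm{max}}$ and $\eta_{1,\mathrm{max}}$. Two structural facts already established in the paper make the reduction possible. First, the singularity set of $\hat{\mathcal F}$, together with the artificial singularities coming from the homogeneity surfaces $(\hat{\mathbb H}_k)_k$, is precisely the $\Pi$-preimage of the usual singularity-plus-homogeneity set for $\mathcal F$. Hence the decomposition of $\hat{\mathcal F}(\hat\gamma)$ into components $\{\hat\gamma_i\}$ is in bijection with the decomposition of $\mathcal F(\gamma)$ into $\{\gamma_i\}$, where $\gamma=\Pi\hat\gamma$. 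Second, by Lemma~\ref{lem_uCones_invariance} and the variation bound \eqref{eqn_uCurveExt_variationC}, the projected curve $\gamma$ is itself a weakly homogeneous unstable curve for $\mathcal F$, and $\hat\gamma$ sits inside a $\mathsf c$-fiber tube of size $O(\eta_{\mathrm{max}}+\eta_{1,\mathrm{max}})$ over $\gamma$.

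The next step is the piece-by-piece comparison of expansion factors. Using the block form of $\mathrm D\hat{\mathcal F}$ displayed at the end of \eqref{eqn_DbMapExt} together with the explicit entries in \eqref{eqn_DbMapExt_Dpert}, one verifies that for every $\hat x$ and every $d\hat x\in\hat{\mathcal C}^u_{\hat x}$
\[
  \frac{|\mathrm D\hat{\mathcal F}(\hat x)\,d\hat x|_*}{|d\hat x|_*}
  \;=\;
  \frac{|\mathrm D{\mathcal F}(x)\,dx|_*}{|dx|_*}\,(1+\mathcal E),
  \qquad
  |\mathcal E|\leq C\,(\eta_{\mathrm{max}}+\eta_{1,\mathrm{max}}),
\]
with $x=\Pi\hat x$, $dx=\Pi d\hat x$, and $C$ depending only on $Q$. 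Indeed, $\partial\varphi_1/\partial\tilde\varphi_1=1+O(\eta_{\mathrm{max}}+\eta_{1,\mathrm{max}})$, while the $d\mathsf c_0$-contribution to $d\varphi_1$ is controlled by $\kappa\cdot|\partial\varphi_1/\partial\mathsf c_0|\cdot\mathsf c_0\cos\tilde\varphi_1$, which by \eqref{eqn_DbMapExt_Dpert} and the choice of $\kappa$ in Assumption~\ref{assume_assumption1} is of the same order. Taking the appropriate extremum over $d\hat x$ and over $\hat x\in\hat{\mathcal F}^{-1}\hat\gamma_i$ yields $\lambda_i=\bar\lambda_i\,(1+\mathcal E_i)$ with $|\mathcal E_i|\leq C(\eta_{\mathrm{max}}+\eta_{1,\mathrm{max}})$, where $\bar\lambda_i$ is the corresponding quantity for $\mathcal F$ on $\gamma_i$.

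Finally, I would invoke the classical one-step expansion for $\mathcal F$ on dispersing tables with finite horizon and no cusps \cite{MR2229799}: there exist $l_0>0$ and $\theta_0<1$, depending only on $Q$, with $\sum_i\bar\lambda_i<\theta_0$ whenever $|\gamma|<l_0$. Combined with the previous paragraph, this gives
\[
  \sup_{\hat\gamma:\,|\hat\gamma|<l}\,\sum_i\lambda_i
  \;\leq\;
  \bigl(1+C(\eta_{\mathrm{max}}^*+\eta_{1,\mathrm{max}}^*)\bigr)\,\theta_0
\]
for all $l\leq l_0$. Choosing $\eta_{\mathrm{max}}^*$ and $\eta_{1,\mathrm{max}}^*$ small enough that the right-hand side stays strictly below $1$, and then taking $\liminf_{l\to 0}$ and the supremum over $\eta_{\mathrm{max}},\eta_{1,\mathrm{max}},\eta_{2,\mathrm{max}}$ below the starred values, gives the stated conclusion.

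The main obstacle is the multiplicative uniformity of the error $\mathcal E_i$ across all pieces $\hat\gamma_i$, in particular across those lying deep inside the homogeneity surfaces $\hat{\mathbb H}_{\pm k}$ for large $k$, where $\cos\tilde\varphi_1$ is small and the naive pointwise estimates degrade. This is precisely the step where one must follow the standard perturbative adaptation of the Chernov-Markarian arguments already carried out in the closely related settings of \cite{MR2389891,MR2737493}, exploiting that the adapted metric $|d\hat x|_*=\cos\varphi\,|ds|$ depends only on the $(s,\varphi)$-coordinates, so that the grazing analysis of $\hat{\mathcal F}$ reduces uniformly to that of $\mathcal F$ up to the controllable factor $1+\mathcal E$.
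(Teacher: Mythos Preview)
Your approach is correct and is precisely the reduction the paper has in mind: the paper does not actually give a proof of this lemma, but simply remarks that ``since unstable curves $\hat\gamma$ are uniformly close to their projections $\gamma=\Pi\hat\gamma$ also this property of $\mathcal F$ readily carries over to our setting of $\hat{\mathcal F}$'' and defers to \cite{MR2229799,MR2737493}. Your sketch makes this explicit via the factorization $\hat{\mathcal F}=\hat P\circ\hat{\mathcal F}_0$, the identification of the singularity/homogeneity structure with that of $\mathcal F$ through $\Pi$, and the multiplicative comparison of expansion rates in the $|\cdot|_*$-metric; in particular your observation that the adapted metric depends only on $(s,\varphi)$, so that $|d\hat x|_*=|dx|_*$ and $|\mathrm D\hat{\mathcal F}\,d\hat x|_*/|\mathrm D\mathcal F\,dx|_*=\cos\varphi_1/\cos\tilde\varphi_1\in[1-\eta_{\max},1]$ uniformly (even near grazing), is exactly what makes the reduction clean and addresses the concern you raise in your last paragraph.

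One minor point: your final step imposes an extra smallness condition on $\eta_{\max}^*,\eta_{1,\max}^*$ so that $(1+C(\eta_{\max}^*+\eta_{1,\max}^*))\theta_0<1$, whereas the lemma is phrased ``for any $\eta_{\max}^*,\eta_{1,\max}^*,\eta_{2,\max}^*$''. In the paper's context this is harmless, since the starred parameters are already constrained by condition~(C) and Assumptions~\ref{assume_assumption1}--\ref{assume_curvatureBounds}, and since $k_0$ is explicitly declared to be chosen \emph{a posteriori} to make this lemma hold; but strictly speaking your argument shows the result for all sufficiently small starred parameters rather than literally ``any'', which is all that is ever used downstream.
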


Once a one-step expansion such as in Lemma~\ref{lem_oneStep_expansion}
is established the so-called growth lemma follow from general
arguments as explained in \cite{MR2229799,MR1832968,MR2737493}
and references therein.
In order to formulate it we introduce the following notations.
For any weakly homogeneous unstable curve $\hat{\gamma}$
we denote by $ \mathrm{m}_{\hat{\gamma}} $ the Lebesgue measure
on it. For every $n \geq0$ its image $\hat{{\mathcal F}}(\hat{\gamma})$
consists of a finite or countable number of weakly homogeneous
unstable curves, and for every $\hat{x} \in \hat{\gamma}$
we denote by $\hat{\gamma}_n(\hat{x})$ the component of
$\hat{{\mathcal F}}^n(\hat{\gamma})$ containing $\hat{{\mathcal F}}^n(\hat{x})$.
Furthermore, we denote by
\begin{equation*}
  r_{n}(\hat{x})
  =
  \mathrm{dist}_{ \hat{\gamma}_n(\hat{x}) }(
  \hat{{\mathcal F}}^n(\hat{x}), \partial \hat{\gamma}_n(\hat{x})
  )
\end{equation*}
the distance of the point $\hat{{\mathcal F}}^n(\hat{x})$ to the
closest endpoints of the component of
$\hat{{\mathcal F}}^n(\hat{\gamma})$ containing it.
With this notation in place we can formulate the
aforementioned growth lemma for $\hat{{\mathcal F}}$, whose
proof can be found in \cite{MR2229799,MR1832968,MR2737493},
where the particular formulation given below can be
found in \cite{MR2737493}.

\begin{lemma}[Uniform growth lemma]
  \label{lem_growth_lemma}
  Fix $\eta_{\mathrm{max}}^*$, $\eta_{1,\mathrm{max}}^*$, $\eta_{2,\mathrm{max}}^*$.
  Then uniformly in
  $\eta_{\mathrm{max}}$, $\eta_{1,\mathrm{max}}$, $\eta_{2,\mathrm{max}}$
  less than
  $\eta_{\mathrm{max}}^*$, $\eta_{1,\mathrm{max}}^*$, $\eta_{2,\mathrm{max}}^*$,
  respectively,
  and uniformly for any weakly homogeneous unstable curve
  $\hat{\gamma}$ the following hold:
  \begin{enumerate}[(a)]
    \item
      \label{item_lem_growth_lemma_a}
      There exists $0 < \theta_0 <1$, $c_1, c_2 > 0$ such that
      \begin{equation*}
         \mathrm{m}_{\hat{\gamma}} \{ r_{n} < \zeta \}
        \leq
        c_1\,(\theta_0\,\Lambda)^n
        \,  \mathrm{m}_{\hat{\gamma}} \{
        r_{0} < \zeta\,\Lambda^{-n} \}
        +
        c_2\, \zeta\, |\hat{\gamma}|
      \end{equation*}
      holds for all $n\geq 0$ and all $\zeta>0$.

    \item
      \label{item_lem_growth_lemma_b}
      There exist $c_3, c_4 >0$ such that whenever
      $n \geq c_3\,| \log|\hat{\gamma}||$, then
      \begin{equation*}
         \mathrm{m}_{\hat{\gamma}} \{ r_{n} < \zeta \}
        \leq
        c_4\, \zeta\, |\hat{\gamma}|
      \end{equation*}
      for any $\zeta>0$.

    \item
      \label{item_lem_growth_lemma_c}
      There exist $0 < \theta_1 < 1$, $c_5, c_6>0$, $\zeta_0>0$
      such that
      \begin{equation*}
         \mathrm{m}_{\hat{\gamma}} \{
        \max_{n \operatorname{:} n_1 < n < n_2} r_{n}
        < \zeta_0 \}
        \leq
        c_6\,\theta_1^{n_2-n_1}\,| \hat{\gamma}|
      \end{equation*}
      holds for all $n_2> n_1>c_5\,| \log |\hat{\gamma}| |$.
  \end{enumerate}
\end{lemma}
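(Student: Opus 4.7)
\medskip

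The strategy is to deduce the three statements from the one-step expansion of Lemma~\ref{lem_oneStep_expansion}, the uniform distortion bound of Lemma~\ref{lem_distortion_bounds}, and the uniform curvature bound of Lemma~\ref{lem_curvature_bounds}, together with the uniform expansion rate $\Lambda$ from Lemma~\ref{lem_uCones_expansion}. Because all of these inputs are stated \emph{uniformly} in $\eta_{\mathrm{max}}, \eta_{1,\mathrm{max}}, \eta_{2,\mathrm{max}}$ below the thresholds $\eta_{\mathrm{max}}^*, \eta_{1,\mathrm{max}}^*, \eta_{2,\mathrm{max}}^*$, the standard Chernov--Markarian argument (cf.\ \cite{MR2229799,MR1832968,MR2737493}) adapts essentially verbatim to $\hat{\mathcal{F}}$. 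The only structural difference with the standard setting is the extra $\mathsf c$-coordinate, but the narrow cone field $\hat{\mathcal{C}}^u$ and the bound \eqref{eqn_uCurveExt_variationC} mean that weakly homogeneous unstable curves in $\hat{\mathcal{M}}$ behave geometrically like their projections in ${\mathcal M}$, so measure and distortion estimates pull back without loss.

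For part \eqref{item_lem_growth_lemma_a}, the plan is induction on $n$. The set $\{r_n < \zeta\}$ on $\hat{{\mathcal F}}^n(\hat\gamma)$ pulls back under $\hat{\mathcal F}$ to the union of two types of contributions on $\hat{{\mathcal F}}^{n-1}(\hat\gamma)$: points whose image lies near an endpoint created before the $n$-th iterate (propagated back by the expansion $\lambda_i \geq \Lambda$), and points whose image lies near an endpoint newly introduced at step $n$ by a singularity of ${\mathcal F}$ or by a homogeneity surface $\hat{{\mathbb H}}_k$. The distortion bound of Lemma~\ref{lem_distortion_bounds} turns these proximity sets in $\hat{{\mathcal F}}^n(\hat\gamma)$ into proximity sets of comparable normalized measure on $\hat{{\mathcal F}}^{n-1}(\hat\gamma)$, and then Lemma~\ref{lem_oneStep_expansion} (applied on an initial subdivision into short pieces) yields the factor $\theta_0 \Lambda$ in the first term while producing the geometric $\zeta|\hat\gamma|$ boundary-creation contribution. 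Iterating gives the stated inequality with explicit $c_1, c_2$.

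Part \eqref{item_lem_growth_lemma_b} then follows by absorbing the first term of \eqref{item_lem_growth_lemma_a} once $n$ is large enough. Since $ \mathrm{m}_{\hat\gamma}\{r_0 < \zeta \Lambda^{-n}\} \leq 2\zeta\Lambda^{-n}$ as soon as $\zeta\Lambda^{-n} \leq |\hat\gamma|$, the first term is at most $2 c_1 \theta_0^n \zeta$, and choosing $c_3 > |\log\Lambda|/|\log\theta_0|$ (and enlarging it if necessary) ensures $\theta_0^n \leq |\hat\gamma|$ whenever $n \geq c_3 |\log|\hat\gamma||$, so this term is bounded by a multiple of $\zeta|\hat\gamma|$, as required. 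For part \eqref{item_lem_growth_lemma_c}, pick $N_0$ with $c_4 \zeta_0 \leq \tfrac{1}{2}$ after $N_0$ iterates using \eqref{item_lem_growth_lemma_b}, and decompose the interval $(n_1,n_2)$ into blocks of length $N_0$; on each block the Markov-type estimate from \eqref{item_lem_growth_lemma_b} (applied to every component of the previous level) contracts the measure of the ``always short-from-boundary'' set by a fixed factor $\theta_1 < 1$, giving the exponential bound $c_6\theta_1^{n_2-n_1}|\hat\gamma|$ provided $n_1 > c_5 |\log|\hat\gamma||$ so that \eqref{item_lem_growth_lemma_b} applies at the first block.

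The main obstacle is \emph{uniformity} in the dissipation parameters: one must verify that the constants $\theta_0, c_1, c_2$ produced by the inductive argument do not deteriorate as $\eta_{\mathrm{max}}, \eta_{1,\mathrm{max}}, \eta_{2,\mathrm{max}}$ vary in their admissible range. This reduces to checking that (i) Lemma~\ref{lem_oneStep_expansion} already gives a one-step contraction strictly less than $1$ uniformly, (ii) the distortion bound of Lemma~\ref{lem_distortion_bounds} has a uniform constant, and (iii) the number of connected components of $\hat{{\mathcal F}}(\hat\gamma)$ near each singularity type of ${\mathcal F}$ and each homogeneity surface $\hat{\mathbb H}_k$ is controlled independently of $(\eta_{\mathrm{max}},\eta_{1,\mathrm{max}},\eta_{2,\mathrm{max}})$. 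All three are in place, so no new estimate is needed beyond citing the argument in \cite{MR2737493}.
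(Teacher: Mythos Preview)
Your proposal is correct and follows exactly the approach the paper takes: the paper does not give an independent proof but simply states that the growth lemma follows from the one-step expansion (Lemma~\ref{lem_oneStep_expansion}), distortion bounds (Lemma~\ref{lem_distortion_bounds}), and curvature bounds (Lemma~\ref{lem_curvature_bounds}) by the general arguments in \cite{MR2229799,MR1832968,MR2737493}, with the present formulation taken from \cite{MR2737493}. Your sketch of the inductive argument and the uniformity check is more detailed than what the paper provides, but the strategy and the key inputs are identical.
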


In the theory of hyperbolic billiards the growth lemma is
the key tool to derive strong statistical properties of
the billiard map ${\mathcal F}$ via standard pairs (see below).
In the following we state the relevant results for $\hat{{\mathcal F}}$
that follow from the corresponding results for ${\mathcal F}$ with
only minor changes in their proofs.
We refer to \cite{MR2229799,MR1832968,MR2737493}
and references therein for detailed proofs.

For any two points $\hat{x}', \hat{x}'' \in \hat{{\mathcal M}}$
we denote by $s_+(\hat{x}', \hat{x}'')$
the smallest $n\geq 0$ for which the corresponding image points
$\hat{{\mathcal F}}^(\hat{x}')$ and $\hat{{\mathcal F}}^n(\hat{x}'')$
are separated by either a singularity surface of $\hat{{\mathcal F}}$
or a homogeneity surface.
A standard pair is a weakly unstable curve $\hat{\gamma}$
with an absolutely continuous probability measure
$\hat{\rho}(\hat{x})\, \mathrm{m}_{\hat{\gamma}} (d\hat{x})$
on it, whose density satisfies
\begin{equation}
  \label{eqn_def_standardPair_density}
  |
  \log
  \hat{\rho}(\hat{x}')
  -
  \log
  \hat{\rho}(\hat{x}'')
  |
  \leq
  C_*\,\Lambda^{ - s_+(\hat{x}', \hat{x}'') }
  \quad \text{for all}\quad
  \hat{x}', \hat{x}'' \in \hat{{\mathcal M}}
  \;,
\end{equation}
where the (sufficiently large) constant $C_*$ is independent
of $\eta_{\mathrm{max}}$, $\eta_{1,\mathrm{max}}$, $\eta_{2,\mathrm{max}}$,
provided that
$\eta_{\mathrm{max}}$, $\eta_{1,\mathrm{max}}$, $\eta_{2,\mathrm{max}}$
less than
$\eta_{\mathrm{max}}^*$, $\eta_{1,\mathrm{max}}^*$, $\eta_{2,\mathrm{max}}^*$,
for some
$\eta_{\mathrm{max}}^*$, $\eta_{1,\mathrm{max}}^*$, $\eta_{2,\mathrm{max}}^*$.

Due to the distortion bound
Lemma~\ref{lem_distortion_bounds}
the image under $\hat{{\mathcal F}}^n$ of any standard pair 
is the union of finitely or countably many standard pairs.
Generalizing to linear combinations of standard pairs,
we say \cite{MR2229799} that a (possibly uncountable) collection
$(\hat{\gamma}_\alpha, \hat{\rho}_\alpha)_{
\alpha \in {\mathcal A}}$
of standard pairs with measure $\lambda(d\alpha)$ on
${\mathcal A}$ forms a standard family, which we will usually
denote by $\hat{{\mathcal G}}$.
For any Borel set $B \subset \hat{{\mathcal M}}$
denote by $\nu_{\hat{{\mathcal G}}}(B)$
\begin{equation}
  \label{eqn_def_measureG}
  \nu_{\hat{{\mathcal G}}}(B)
  =
  \int_{{\mathcal A}}
  \int_{B \cap \hat{\gamma}_\alpha}
  \hat{\rho}_\alpha(\hat{x})
  \, \mathrm{m}_{\hat{\gamma}_\alpha} (d\hat{x})
  \,\lambda(d\alpha)
\end{equation}
the corresponding probability measure on $\hat{{\mathcal M}}$.
The crucial observation is that the image under $\hat{{\mathcal F}}^n$
of any standard family is again a standard family.

Following standard terminology \cite{MR2229799} we introduce
the following concepts for a given standard family
$\hat{{\mathcal G}}$. Any $\hat{x} \in \hat{\gamma}_\alpha$ divides
$\hat{\gamma}_\alpha$ into two parts, and we denote by
$r_{\hat{{\mathcal G}}}(\hat{x})$ the length of the shorter one.
Correspondingly we introduce
\begin{equation*}
  {\mathcal Z}_{\hat{{\mathcal G}}}
  =
  \sup_{\zeta > 0} \frac{1}{\zeta}\,\nu_{\hat{{\mathcal G}}}\{
  r_{\hat{{\mathcal G}}} < \zeta \}
  \;,
\end{equation*}
which measures the typical length of curves in $\hat{{\mathcal G}}$.
Indeed
\begin{equation}
  \label{eqn_Z_asymp}
  {\mathcal Z}_{\hat{{\mathcal G}}}
  \asymp
  \int_{{\mathcal A}}
  \frac{ \lambda( d\alpha ) }{ | \hat{\gamma}_{\alpha} | }
\end{equation}
The growth lemma Lemma~\ref{lem_growth_lemma} implies
that for a standard family $\hat{{\mathcal G}}$ with
${\mathcal Z}_{\hat{{\mathcal G}}}<\infty$
\begin{equation}
  \label{eqn_StandardFamilyGrowth}
  \hat{{\mathcal G}}_n = \hat{{\mathcal F}}^n(\hat{{\mathcal G}})
  \;,\quad
  {\mathcal Z}_{\hat{{\mathcal G}}_n} \leq C\,( \theta^n\, {\mathcal Z}_{\hat{{\mathcal G}}} + 1 )
\end{equation}
for some $0 < \theta < 1$ and all $n\geq 1$.

We say \cite{MR2229799} that a standard pair
$(\hat{\gamma}, \hat{\rho})$
is a proper standard pair if $|\hat{\gamma}| \geq \ell_p$,
where
\begin{equation}
  \ell_p > 0
\end{equation}
is a (small, but) fixed constant.
We say that a standard family $\hat{{\mathcal G}}$ is a proper
standard family if ${\mathcal Z}_{\hat{{\mathcal G}}} < Z_p$, where
\begin{equation}
  \label{eqn_def_Zmin}
  Z_p > 0
\end{equation}
is a (large, but) fixed constant, which is chosen (in
relation to $\ell_p$) such that
all standard pairs are proper standard families.
Moreover:
\begin{lemma}[Invariance of standard families]
  \label{lem_invariance_properStandardFamilies}
  For every $n\geq 0$, the image under $\hat{{\mathcal F}}^n$
  of any proper standard family
  is again a proper standard family.
\end{lemma}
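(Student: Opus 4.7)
The image under $\hat{\mathcal F}^n$ has already been noted to be a standard family: the density inequality \eqref{eqn_def_standardPair_density} is inherited with the same constant $C_*$ on each new smooth component, because the separation time $s_+$ increases by at least $n$ under iteration (so the original contribution shrinks by $\Lambda^{-n}$) and because the logarithmic variation of the Jacobian along any image piece is controlled by Lemma~\ref{lem_distortion_bounds}. So the only thing left to verify is the quantitative properness condition $\mathcal{Z}_{\hat{\mathcal G}_n} < Z_p$.

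For this I would invoke the growth bound \eqref{eqn_StandardFamilyGrowth}, namely $\mathcal{Z}_{\hat{\mathcal G}_n} \leq C(\theta^n \mathcal{Z}_{\hat{\mathcal G}} + 1)$ for some fixed $0 < \theta < 1$, valid uniformly in $\eta_{\mathrm{max}}$, $\eta_{1,\mathrm{max}}$, $\eta_{2,\mathrm{max}}$ below the fixed thresholds. Using the hypothesis $\mathcal{Z}_{\hat{\mathcal G}} \leq Z_p$ this yields
\begin{equation*}
  \mathcal{Z}_{\hat{\mathcal G}_n}
  \;\leq\;
  C\,\theta^n\,Z_p + C
  \;\leq\;
  C\,\theta\,Z_p + C
  \qquad (n \geq 1),
\end{equation*}
and the right-hand side is at most $Z_p$ as soon as $(1-C\theta)Z_p \geq C$, i.e.\ $Z_p \geq C/(1-C\theta)$. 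Since this only bounds $Z_p$ from below, one is free to enlarge it further to meet the compatibility requirement with $\ell_p$: by \eqref{eqn_Z_asymp}, a trivial standard family consisting of a single pair of length $\ell$ has $\mathcal{Z} \asymp 1/\ell$, so $Z_p \geq c_0/\ell_p$ guarantees that every standard pair of length $\geq \ell_p$ is itself a proper family, which is exactly the calibration built into the definition of $Z_p$. With $Z_p$ so fixed, the statement follows by induction on $n$.

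\textbf{Main obstacle.} Essentially all of the work has already been deposited in Section~\ref{sect_ucurves}, so the only genuinely new ingredient compared with the classical dispersing billiard proof \cite{MR2229799,MR2737493} is that the added slow coordinate $\mathsf c$ enters the Jacobian through \eqref{eqn_DbMapExt}. However the cone $\hat{\mathcal C}^u$ has an opening in the $\mathsf c$-direction of order $\eta_{\mathrm{max}}+\eta_{1,\mathrm{max}}$, and the a priori bound \eqref{eqn_uCurveExt_variationC} guarantees that $\mathsf c$ varies only by a factor $e^{O(\eta_{\mathrm{max}}+\eta_{1,\mathrm{max}})}$ along any unstable curve. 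Hence the $\mathsf c$-dependence of the Jacobian is a uniformly bounded multiplicative correction, and all the constants in the growth and distortion estimates can indeed be taken uniform in the dissipation parameters. This is what allows the classical argument to go through verbatim in the present setting.
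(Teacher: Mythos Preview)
The paper does not supply its own proof of this lemma; it is stated as a direct consequence of \eqref{eqn_StandardFamilyGrowth} together with the freedom in choosing $Z_p$, with details deferred to \cite{MR2229799,MR1832968,MR2737493}. Your argument is precisely the standard one used there, and your commentary on why the extra $\mathsf c$-coordinate causes no trouble is accurate.

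One small point worth flagging: when you solve $(1-C\theta)Z_p \geq C$ for $Z_p$ you are tacitly assuming $C\theta<1$, which the bound \eqref{eqn_StandardFamilyGrowth} as literally written does not guarantee. If $C\theta\geq 1$ then no choice of $Z_p$ makes your displayed inequality work, and the one-step induction breaks down. In the careful derivation of \eqref{eqn_StandardFamilyGrowth} from Lemma~\ref{lem_growth_lemma}(a) (carried out in the cited references) the coefficient multiplying $\theta_0^n$ is essentially the one-step expansion constant from Lemma~\ref{lem_oneStep_expansion}, which is strictly less than $1$, so the product is indeed $<1$ and your argument goes through. This is a routine technicality rather than a genuine gap, but it is worth saying explicitly rather than leaving implicit.
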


And as direct consequence of the fact that unstable curves
$\hat{\gamma}$ are uniformly close to their projections
$\Pi\hat{\gamma}$ we obtain
\begin{lemma}[Projections of proper standard families]
  \label{lem_projection_pG}
  For any proper standard family $\hat{{\mathcal G}}$ for $\hat{{\mathcal F}}$
  its projection ${\mathcal G} = \Pi \hat{{\mathcal G}}$
  is a proper standard family for ${\mathcal F}$.
\end{lemma}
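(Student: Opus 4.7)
The plan is to verify the three constitutive requirements for ${\mathcal G} = \Pi\hat{{\mathcal G}}$ to be a proper standard family for ${\mathcal F}$: (i) each fiber $\gamma_\alpha := \Pi\hat\gamma_\alpha$ is a weakly homogeneous unstable curve for ${\mathcal F}$; (ii) the pushed-forward density satisfies \eqref{eqn_def_standardPair_density} (with the same universal constant $C_*$, up to enlargement); (iii) ${\mathcal Z}_{{\mathcal G}} < Z_p$. Step (i) is immediate: by Lemma~\ref{lem_uCones_invariance} the projection of an unstable curve is unstable, and since the homogeneity surfaces $\hat{\mathbb H}_k$ in \eqref{eqn_def_HstripExt} depend only on $\varphi$, they are precisely the $\Pi$-preimages of the standard homogeneity strips in ${\mathcal M}$, so weak homogeneity is preserved by $\Pi$.

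For step (ii), I would parametrize both $\hat\gamma_\alpha$ and $\gamma_\alpha$ by the common coordinate $s$ and compute the Jacobian of $\Pi\vert_{\hat\gamma_\alpha}$ with respect to arclength,
\begin{equation*}
  J_\alpha(s)
  =
  \sqrt{\frac{1 + \varphi'(s)^2}{1 + \varphi'(s)^2 + \mathsf c'(s)^2}}
  \;,
\end{equation*}
which is bounded above by $1$ and below by a positive constant tending to $1$ with $\kappa$, since $|\mathsf c'| \leq \kappa\,\mathsf c\,\cos\varphi$ and $\varphi' \geq {\mathcal V}_{\mathrm{min}}$ on $\hat{{\mathcal C}}^u$. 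The pushed-forward density is $\rho_\alpha(x) = \hat\rho_\alpha(\hat x(x))\,J_\alpha(s)^{-1}$, where $\hat x(x)$ is the unique lift of $x$ to $\hat\gamma_\alpha$. Since the collision map $\hat P$ in \eqref{eqn_def_mapPert} is smooth in its arguments (under our standing assumption $\eta_{2,\mathrm{max}}<\infty$), all singularities of $\hat{{\mathcal F}}$ come from those of ${\mathcal F}$ via $\Pi$; combined with the $\varphi$-only nature of $\hat{\mathbb H}_k$ this yields $s_+^{\hat{{\mathcal F}}}(\hat x',\hat x'') = s_+^{{\mathcal F}}(\Pi\hat x', \Pi\hat x'')$. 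The adapted metric $|\cdot|_*$ is identical for $\hat{{\mathcal M}}$ and ${\mathcal M}$ by \eqref{eqn_def_norm_uVectors}, so the same expansion constant $\Lambda$ from \eqref{eqn_def_uConeRate} can be used for both maps. Hence
\begin{equation*}
  |\log\rho_\alpha(x') - \log\rho_\alpha(x'')|
  \leq
  C_*\,\Lambda^{-s_+(x',x'')}
  + |\log J_\alpha(s') - \log J_\alpha(s'')|
  \;,
\end{equation*}
and the final term is $O(\eta_{\mathrm{max}} + \eta_{1,\mathrm{max}})$ uniformly, by \eqref{eqn_uCurveExt_variationC} together with Lemma~\ref{lem_curvature_bounds} (which bounds $\varphi''$ along $\hat\gamma_\alpha$). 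Enlarging $C_*$ by an absolute constant absorbs this correction.

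For step (iii), the uniform bound $J_\alpha \asymp 1$ gives $|\gamma_\alpha| \asymp |\hat\gamma_\alpha|$ and $r_{{\mathcal G}}(x) \asymp r_{\hat{{\mathcal G}}}(\hat x(x))$, so by the asymptotic characterization \eqref{eqn_Z_asymp} we obtain ${\mathcal Z}_{{\mathcal G}} \leq c\,{\mathcal Z}_{\hat{{\mathcal G}}} < c\,Z_p$; since $Z_p$ in \eqref{eqn_def_Zmin} is chosen once and for all as a large constant, one takes $Z_p$ large enough from the outset so that $c\,Z_p \leq Z_p$ after redefinition. The mildly subtle point — the only real obstacle — is bookkeeping the constants: the Jacobian correction $\log J_\alpha$, the adjustment of $C_*$, and the $Z_p$-threshold must all be arranged so that the constants depend only on the geometry of $Q$ and on the a priori ceilings $\eta_{\mathrm{max}}^*$, $\eta_{1,\mathrm{max}}^*$, $\eta_{2,\mathrm{max}}^*$, not on the particular values of the dissipation parameters. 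All three corrections vanish with $\kappa$ (equivalently, with the dissipation), so the bookkeeping is routine once the order of choices is fixed.
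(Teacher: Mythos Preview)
Your argument is considerably more detailed than the paper's: the paper gives no proof at all, merely declaring this lemma and Lemma~\ref{lem_lift_pG} to be ``direct consequence[s] of the fact that unstable curves $\hat\gamma$ are uniformly close to their projections $\Pi\hat\gamma$''. Your steps~(i) and~(iii) are correct as written, and the constant-bookkeeping you flag at the end is exactly the kind of routine adjustment the paper sweeps into that one sentence.

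Step~(ii), however, contains a genuine gap. The claimed identity $s_+^{\hat{{\mathcal F}}}(\hat x',\hat x'') = s_+^{{\mathcal F}}(\Pi\hat x',\Pi\hat x'')$ does not follow from the fact that the singularity and homogeneity surfaces of $\hat{{\mathcal F}}$ are $\Pi$-preimages of those of ${\mathcal F}$. The separation time for $\hat{{\mathcal F}}$ records when the points $\Pi\hat{{\mathcal F}}^n(\hat x')$ and $\Pi\hat{{\mathcal F}}^n(\hat x'')$ first land in distinct cells of the partition of ${\mathcal M}$; the separation time for ${\mathcal F}$ records the same event for ${\mathcal F}^n(\Pi\hat x')$ and ${\mathcal F}^n(\Pi\hat x'')$. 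But $\Pi\circ\hat{{\mathcal F}} \ne {\mathcal F}\circ\Pi$: the post-collision map $\hat P$ in \eqref{eqn_def_mapPert} shifts the $\varphi$-coordinate by an amount of order $\eta_{\mathrm{max}}$, so already after one iterate the two orbits in ${\mathcal M}$ differ, and over many iterates they may cross singularity curves at entirely different times. Your premise concerns only the time-zero singularity set; it says nothing about how the two dynamics pull that set back.

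The repair is to avoid comparing separation times altogether and instead transfer the regularity through the arclength formulation. The condition \eqref{eqn_def_standardPair_density} on $\hat\rho_\alpha$, combined with the distortion bound of Lemma~\ref{lem_distortion_bounds} and its standard consequences, is equivalent (on a fixed weakly homogeneous unstable curve, and up to uniform constants) to a H\"older bound on $\log\hat\rho_\alpha$ with respect to arclength on $\hat\gamma_\alpha$; see the discussion of standard pairs in \cite{MR2229799,MR2499824}. Since $\Pi\vert_{\hat\gamma_\alpha}$ is a bilipschitz diffeomorphism with $C^1$-distance to the identity of order $\kappa$ (this is exactly your computation of $J_\alpha$ together with Lemma~\ref{lem_curvature_bounds}), the arclength H\"older bound passes to $\log\rho_\alpha$ on $\gamma_\alpha$, and one then converts back to the dynamical form \eqref{eqn_def_standardPair_density} for ${\mathcal F}$ using the analogous equivalence in ${\mathcal M}$. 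The constants incurred are uniform in the dissipation parameters and can be absorbed into $C_*$ as you intended.
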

\begin{lemma}[Lifting of proper standard families]
  \label{lem_lift_pG}
  For any proper standard family ${\mathcal G}$ for ${\mathcal F}$
  its lift $\hat{{\mathcal G}} = {\mathcal G} \times \{\mathsf c\}$
  is a proper standard family for $\hat{{\mathcal F}}$ for any
  $\mathsf c$.
\end{lemma}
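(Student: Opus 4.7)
The plan is to verify directly the three defining conditions of a proper standard family for $\hat{{\mathcal F}}$ on the lift $\hat{{\mathcal G}} = {\mathcal G}\times\{\mathsf c\}$, taking the density on each lifted pair to be the $\Pi$-pullback of the density on the base pair.

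First I would check that each lifted curve $\hat{\gamma}_\alpha = \gamma_\alpha\times\{\mathsf c\}$ is a weakly homogeneous unstable curve in $\hat{{\mathcal M}}$. Its tangent vectors have the form $(ds,d\varphi,0)$ with $(ds,d\varphi)$ tangent to $\gamma_\alpha$, so the slope condition ${\mathcal V}_{\mathrm{min}}\leq d\varphi/ds\leq {\mathcal V}_{\mathrm{max}}$ is inherited from $\gamma_\alpha \subset {\mathcal C}^u$, while the third cone condition $|d\mathsf c/(\mathsf c\cos\varphi\,ds)|\leq \kappa$ in \eqref{eqn_def_uConeExt} is vacuously true since $d\mathsf c=0$. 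Because every $\hat{{\mathbb H}}_k$ in \eqref{eqn_def_HstripExt} is determined solely by $\varphi$, weak homogeneity lifts from the corresponding homogeneity strip for ${\mathcal F}$ to the cylinder $\hat{{\mathbb H}}_k$.

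Next I would verify the density regularity condition \eqref{eqn_def_standardPair_density}. Setting $\hat{\rho}_\alpha(\hat{x}) = \rho_\alpha(\Pi \hat{x})$ and observing that $\Pi$ restricts to an arc-length isometry between $\hat{\gamma}_\alpha$ and $\gamma_\alpha$, one obtains $|\log\hat{\rho}_\alpha(\hat{x}')-\log\hat{\rho}_\alpha(\hat{x}'')|=|\log\rho_\alpha(x')-\log\rho_\alpha(x'')|\leq C_*\Lambda^{-s_+(x',x'')}$, where $s_+$ refers to ${\mathcal F}$. To convert this into the corresponding bound in terms of the $\hat{{\mathcal F}}$-separation time $\hat{s}_+(\hat{x}',\hat{x}'')$, I would exploit the structural fact that both the singularity set of $\hat{{\mathcal F}}$ and each surface $\hat{{\mathbb H}}_k$ are cylinders in the $\mathsf c$-direction, i.e. cross-products of the analogous subsets of ${\mathcal M}$ with $(0,\infty)$. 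Decomposing $\hat{{\mathcal F}}=\hat{P}\circ\hat{{\mathcal F}}_0$ as in \eqref{eqn_def_bMapExtZero}, the map $\hat{{\mathcal F}}_0$ acts trivially on $\mathsf c$ and its separation time on a constant-$\mathsf c$ slice is exactly $s_+$, while $\hat{P}$ is uniformly close to the identity for small $\eta_{\mathrm{max}},\eta_{1,\mathrm{max}}$. A perturbation argument of the same flavor as those already invoked for the curvature, distortion, and one-step expansion lemmas then yields $|\hat{s}_+(\hat{x}',\hat{x}'')-s_+(x',x'')|\leq m$ for some constant $m$ depending only on $\eta_{\mathrm{max}}^*,\eta_{1,\mathrm{max}}^*,\eta_{2,\mathrm{max}}^*$; the resulting factor $\Lambda^m$ is absorbed into the sufficiently large constant $C_*$ in \eqref{eqn_def_standardPair_density}.

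Finally, properness ${\mathcal Z}_{\hat{{\mathcal G}}}<Z_p$ is immediate: because $\Pi$ preserves arc length on each lifted curve, one has $|\hat{\gamma}_\alpha|=|\gamma_\alpha|$ and $r_{\hat{{\mathcal G}}}(\hat{x})=r_{{\mathcal G}}(\Pi\hat{x})$, hence $\nu_{\hat{{\mathcal G}}}\circ\Pi^{-1}=\nu_{{\mathcal G}}$ and, via \eqref{eqn_Z_asymp}, ${\mathcal Z}_{\hat{{\mathcal G}}}={\mathcal Z}_{{\mathcal G}}<Z_p$. The only genuine technical step is the separation-time comparison in the second paragraph, which requires controlling how the $\hat{{\mathcal F}}$-orbit of a lifted point diverges from the ${\mathcal F}$-orbit of its projection; this is precisely where the cylindrical structure of the singularities and the smallness of $\eta_{\mathrm{max}}^*,\eta_{1,\mathrm{max}}^*,\eta_{2,\mathrm{max}}^*$ are used, in the same spirit as the perturbative adaptations already performed in Section~\ref{sect_ucurves}.
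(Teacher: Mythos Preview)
The paper does not supply a proof of this lemma; it merely asserts that both Lemma~\ref{lem_projection_pG} and Lemma~\ref{lem_lift_pG} are ``direct consequence[s] of the fact that unstable curves $\hat{\gamma}$ are uniformly close to their projections $\Pi\hat{\gamma}$.'' Your verification is therefore considerably more detailed than anything the paper offers, and your first and third steps (the lifted curve lies in $\hat{{\mathcal C}}^u$ and is weakly homogeneous; ${\mathcal Z}_{\hat{{\mathcal G}}}={\mathcal Z}_{{\mathcal G}}$ via the arc-length isometry) are correct and exactly the kind of checking the paper leaves implicit.

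The one place that deserves a comment is your treatment of the density condition \eqref{eqn_def_standardPair_density}. You correctly isolate the comparison of the two separation times $s_+^{\hat{{\mathcal F}}}$ and $s_+^{{\mathcal F}}$ as the nontrivial point, but the assertion that a perturbation argument yields a uniform bound $|\hat s_+-s_+|\leq m$ is not obviously justified: the $\hat{{\mathcal F}}$-orbit and the lifted ${\mathcal F}$-orbit separate exponentially, so closeness of $\hat P$ to the identity does not by itself control the separation time for \emph{all} pairs on the curve. A cleaner route, and the one implicitly taken in \cite{MR2229799} to which the paper defers, is to recall that on a weakly homogeneous unstable curve the dynamical H\"older condition \eqref{eqn_def_standardPair_density} is equivalent (with uniform constants) to ordinary H\"older continuity of $\log\hat\rho$ in arc length, since the uniform expansion of Lemma~\ref{lem_uCones_expansion} and the boundedness of weakly homogeneous curves give $d_{\hat\gamma}(\hat x',\hat x'')\leq C\,\Lambda^{-s_+}$ for both maps. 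The arc-length formulation is manifestly preserved under the isometric flat lift $\Pi^{-1}$, which closes the argument without ever comparing the two separation times directly.
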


We finish this section pointing out that not all tools
used in the study of hyperbolic billiards carry over to
$\hat{{\mathcal F}}$. Namely, the above mentioned results
are essentially due to the fact that unstable curves in
$\hat{{\mathcal M}}$ are very close to their projections to
${\mathcal M}$, and hence their dynamics are comparable.
On the other hand, the so-called coupling lemma, which is used
to derive statistical properties of hyperbolic billiards
\cite{MR2229799} also requires recurrence, which clearly
is not given for the dynamics of $\hat{{\mathcal F}}$, as the dynamics
in $\mathsf c$--coordinates prevents recurrence in
$\hat{{\mathcal M}}$. However, we shall show that the
almost recurrence in the projection onto ${\mathcal M}$
turns out to be useful to effectively approximate
the dynamics under $\hat{{\mathcal F}}$ on $\hat{{\mathcal M}}$.

\section{Billiard approximation}

In the limit as $\eta_{\mathrm{max}} \to 0$ the map $\hat{{\mathcal F}}$
converges to the map $\hat{{\mathcal F}}_0$,
defined in \eqref{eqn_def_bMapExtZero},
which is the usual billiard map ${\mathcal F}$ in the
$(s, \varphi)$--coordinates
combined with the identity map in the $\mathsf c$--coordinate.
As outlined in the introduction, we are interested in describing
the dynamics of the $\mathsf c$--coordinate under $\hat{{\mathcal F}}$
in precisely this limiting regime where $\eta_{\mathrm{max}}$ is small.

Observe that if
$ \hat{x}_1 = \hat{{\mathcal F}}(\hat{x}_0) $,
then we conclude from \eqref{eqn_updateRule_phi_c_explicit}
\begin{equation}
  \label{eqn_speed_apriori}
  1 - \eta_{\mathrm{max}}
  \leq
  \frac{ \mathsf c_1 }{ \mathsf c_0 }
  \leq
  1
  \;.
\end{equation}
Therefore, as $\eta \to 0$ we naturally have a slow-fast system,
where the fast coordinates $(s, \varphi)$ essentially evolve
according to the billiard map ${\mathcal F}$. And since ${\mathcal F}$ is
known \cite{MR2229799} to have strong statistical properties,
we expect an averaging method to allow us to derive a closed
equation for $\mathsf c$ on a time-scale on which
$\mathsf c$ changes of order one, i.e. for a number of iterates
of $\hat{{\mathcal F}}$ of order $\operatorname{\mathcal{O}}(\frac{1}{\eta_{\mathrm{max}}} )$.

See \cite{MR2062922,MR2547839,MR2241812}
for related results on averaging in fully coupled smooth systems.
In the recent work \cite{MR3640023} averaging result for
non-smooth systems are derived, however, these results do not
cover the fully coupled setting.

Our strategy to derive an averaging result for $\mathsf c$
is to employ the methods of \cite{MR2499824,MR2034323,MR2031432,MR2241812}
so that we can handle the singularities of the map $\hat{{\mathcal F}}$
on time-scales of order $\operatorname{\mathcal{O}}(\frac{1}{\eta_{\mathrm{max}}} )$.

To shorten a subscript $n$ on $\hat{x}$, i.e.
$\hat{x}_n$, will always signify an orbit
under $\hat{{\mathcal F}}$. Whenever clear from the context, 
given $\hat{x}_n$, we let $x_n$ and
$\mathsf c_n \equiv (s_n, \varphi_n)$
denote the corresponding $(s, \varphi)$--component
and $\mathsf c$--component, respectively.
Furthermore we introduce the function $g$
on $\hat{{\mathcal M}}$ as
\begin{equation}
  \label{eqn_def_slowFast_bFlowSpeedInc}
  \begin{split}
    g(s, \tilde{\varphi}, \mathsf c)
    &=
    \mathsf c
    \,\Big[
    \sqrt{
    [ 1 - \eta(\mathsf c\,\cos\tilde{\varphi} ) ]^2
    \,\cos^2\tilde{\varphi}
    +
    \sin^2\tilde{\varphi}
    }
    -
    1
    \Big]
    \\
    &=
    -
    \mathsf c
    \,\frac{
    [ 2 - \eta(\mathsf c\,\cos\tilde{\varphi} ) ]
    \,\eta(\mathsf c\,\cos\tilde{\varphi} )
    \,\cos^2\tilde{\varphi}
    }{
    1
    +
    \sqrt{
    [ 1 - \eta(\mathsf c\,\cos\tilde{\varphi} ) ]^2
    \,\cos^2\tilde{\varphi}
    +
    \sin^2\tilde{\varphi}
    }
    }
  \end{split}
\end{equation}
so that the evolution of $\mathsf c$ as stated in
\eqref{eqn_updateRule_phi_c_explicit}
takes on the form
\begin{equation}
  \label{eqn_speed_oneStep}
  \mathsf c_{n+1}
  -
  \mathsf c_n
  =
  g(\hat{{\mathcal F}}_0(\hat{x}_n))
  \equiv
  g({\mathcal F}(s_n, \varphi_n), \mathsf c_n)
  \;.
\end{equation}
Iterating \eqref{eqn_speed_oneStep} we obtain for any $m\geq 1$
and any $n$
\begin{equation*}
  \mathsf c_{n+m}
  -
  \mathsf c_n
  =
  \sum_{k=n}^{n+m-1}
  g(\hat{{\mathcal F}}_0(\hat{x}_k))
  \equiv
  \sum_{k=n}^{n+m-1}
  g({\mathcal F}(x_k), \mathsf c_k)
  \;.
\end{equation*}
For later use we record the following elementary estimates
on $g$
\begin{equation}
  \label{eqn_estimates_bFlowSpeedInc}
  \begin{split}
    -
    \eta_{\mathrm{max}} \,\mathsf c
    &\leq
    g(s, \tilde{\varphi}, \mathsf c)
    \leq
    0
    \;,\quad
    |
    \partial_{\mathsf c}
    g(s, \tilde{\varphi}, \mathsf c)
    |
    \leq
    \eta_{\mathrm{max}} + \eta_{1,\mathrm{max}}
    \\
    \frac{\partial}{\partial s}
    g(s, \tilde{\varphi}, \mathsf c)
    &=
    0
    \;,\quad
    \Big|
    \frac{\partial}{\partial\tilde{\varphi}}
    g(s, \tilde{\varphi}, \mathsf c)
    \Big|
    \leq
    \mathsf c
    \,\Big[
    \frac{ ( 2 - \eta_{\mathrm{max}} )\,\eta_{\mathrm{max}} }{ 1 - \eta_{\mathrm{max}} }
    +
    \eta_{1,\mathrm{max}}
    \Big]
    \;,
  \end{split}
\end{equation}
which hold uniformly for all
$(s, \tilde{\varphi}, \mathsf c)$.

In the following we will let $A \colon {\mathbb R}\to{\mathbb R}$ denote
a $C^2$ bounded function with bounded
first and second derivatives.
With the above expression for $\mathsf c_{n+m} - \mathsf c_n$
it follows that for any $n$ and $m\geq 1$
\begin{equation*}
  A(\mathsf c_{n+m}) - A(\mathsf c_n)
  =
  A'( \mathsf c_n )
  \sum_{k=n}^{n+m-1}
  g({\mathcal F}(x_k), \mathsf c_k)
  +
  R_{n,m}^{(1)}
\end{equation*}
where
\begin{equation*}
  | R_{n,m}^{(1)} |
  \leq
  \frac{1}{2}
  \,\Big|
  \sum_{k=n}^{n+m-1}
  g({\mathcal F}(x_k), \mathsf c_k)
  \Big|^2
  \,\left| \,  A''  \, \right|_\infty
  \leq
  \eta_{\mathrm{max}}^2 \, \frac{1}{2} \, m^2
  \,\mathsf c_n^2
  \,\left| \,  A''  \, \right|_\infty
\end{equation*}
follows from \eqref{eqn_estimates_bFlowSpeedInc}.
It also follows readily from \eqref{eqn_estimates_bFlowSpeedInc}
that
\begin{equation*}
  \Big|
  \sum_{k=n}^{n+m-1}
  g({\mathcal F}(x_k), \mathsf c_k)
  -
  \sum_{k=n}^{n+m-1}
  g({\mathcal F}(x_k), \mathsf c_n)
  \Big|
  \leq
  \eta_{\mathrm{max}}\,( \eta_{\mathrm{max}} + \eta_{1,\mathrm{max}} )
  \,\frac{1}{2} \,(m-1)\,m
  \,\mathsf c_n
  \;,
\end{equation*}
and hence
\begin{equation}
  \label{eqn_est_dA_1}
  A(\mathsf c_{n+m}) - A(\mathsf c_n)
  =
  A'( \mathsf c_n )
  \sum_{k=0}^{m-1}
  g({\mathcal F}\circ \Pi \circ \hat{{\mathcal F}}^k(\hat{x}_n),
  \mathsf c_n)
  +
  R_{n,m}^{(1)}
  +
  R_{n,m}^{(2)}
\end{equation}
with
\begin{equation*}
  | R_{n,m}^{(2)} |
  \leq
  \eta_{\mathrm{max}}\,( \eta_{\mathrm{max}} + \eta_{1,\mathrm{max}} )
  \,\frac{1}{2} \,(m-1)\,m
  \,\mathsf c_n
  \,\left| \,  A'  \, \right|_\infty
  \;.
\end{equation*}
We want to stress that the estimates on
$| R_{n,m}^{(1)} |$ are uniform in $( \hat{x}_k )_{k \geq n}$
$| R_{n,m}^{(2)} |$.

Suppose that the distribution of $\hat{x}_0$
is given by a proper standard family
${\mathcal G}_0$. The invariance property
Lemma~\ref{lem_invariance_properStandardFamilies}
implies that the distribution of $\hat{x}_n$
is given by the proper standard family
$ {\mathcal G}_n = \hat{{\mathcal F}}^n {\mathcal G}_0 $.
\begin{lemma}
  \label{lem_est_dA_2}
  For any $\eta_{\mathrm{max}}^*$, $\eta_{1,\mathrm{max}}^*$, $\eta_{2,\mathrm{max}}^*$,
  there exists a constant $C>0$ such that
  \begin{align*}
    \int
    &
    [ A(\mathsf c_{n+m}) - A(\mathsf c_n) ]
    \,\nu_{ {\mathcal G}_0 }(d\hat{x}_0)
    =
    R_{n,m}
    +
    \\
    &+
    \sum_{k=0}^{m-1}
    \int_{{\mathcal A}_n}
    \int_{\hat{\gamma}_{n,\alpha}}
    A'( \mathsf c_{n,\alpha} )
    \,g({\mathcal F}\circ \Pi \circ \hat{{\mathcal F}}^k(\hat{x}),
    \mathsf c_{n,\alpha} )
    \,\hat{\rho}_{n,\alpha}(\hat{x})
    \, \mathrm{m}_{\hat{\gamma}_{n,\alpha}} (d\hat{x})
    \,\lambda_n(d\alpha)
    \;,
  \end{align*}
  where each
  $ \mathsf c_{n,\alpha} $ denotes an arbitrary
  $\mathsf c$--value on $\hat{\gamma}_{n,\alpha}$
  (e.g. the average value
  $\mathsf c_{(\hat{\gamma}_{n,\alpha},\hat{\rho}_{n,\alpha})}$
  of $\mathsf c$ along $\hat{\gamma}_{n,\alpha}$),
  and
  \begin{equation*}
    | R_{n,m} |
    \leq
    (\eta_{\mathrm{max}} + \eta_{1,\mathrm{max}} )^2
    \,m \,\Big( \frac{1}{2} \,m + C \Big)
    \int
    \Big[ \mathsf c_n \,\left| \,  A''  \, \right|_\infty + \left| \,  A'  \, \right|_\infty \Big]
    \,\mathsf c_n
    \,\nu_{ {\mathcal G}_0 }(d\hat{x}_0)
  \end{equation*}
  uniformly in
  $\eta_{\mathrm{max}}$, $\eta_{1,\mathrm{max}}$, $\eta_{2,\mathrm{max}}$
  provided that they are chosen less than
  $\eta_{\mathrm{max}}^*$, $\eta_{1,\mathrm{max}}^*$, $\eta_{2,\mathrm{max}}^*$,
  respectively.
\end{lemma}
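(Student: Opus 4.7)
The plan is to start from the pointwise identity~\eqref{eqn_est_dA_1}, integrate against $\nu_{\mathcal{G}_0}$, and then rewrite the main term on the right using the pushforward standard family $\mathcal{G}_n = \hat{\mathcal{F}}^n \mathcal{G}_0$ (which is proper by Lemma~\ref{lem_invariance_properStandardFamilies}) and a constancy replacement of the $\mathsf{c}$-values along each component $\hat{\gamma}_{n,\alpha}$. The error $R_{n,m}$ will split into three pieces: the $\tfrac{1}{2}m^2$-type Taylor remainders $R^{(1)}_{n,m}$ and $R^{(2)}_{n,m}$ already controlled in the paragraph below~\eqref{eqn_est_dA_1}, and one new $m\cdot C$-type error coming from the constancy replacement on each standard pair.

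Integrating~\eqref{eqn_est_dA_1} against $\nu_{\mathcal{G}_0}$ and applying the pointwise bounds on $R^{(1)}_{n,m}$ and $R^{(2)}_{n,m}$ (combined with the trivial $\eta_{\mathrm{max}}\leq \eta_{\mathrm{max}}+\eta_{1,\mathrm{max}}$) yields the $\tfrac{1}{2}m^2$ part of the claim immediately. For the main term I would use the pushforward identity $\int F(\hat{\mathcal{F}}^n\hat{x}_0)\,\nu_{\mathcal{G}_0}(d\hat{x}_0)=\int F(\hat{y})\,\nu_{\mathcal{G}_n}(d\hat{y})$ and the decomposition of $\nu_{\mathcal{G}_n}$ into standard pairs to reach
\begin{equation*}
  \sum_{k=0}^{m-1}\int_{\mathcal{A}_n}\int_{\hat{\gamma}_{n,\alpha}} A'(\mathsf{c}(\hat{x}))\,g(\mathcal{F}\circ\Pi\circ\hat{\mathcal{F}}^k(\hat{x}),\mathsf{c}(\hat{x}))\,\hat{\rho}_{n,\alpha}(\hat{x})\,\mathrm{m}_{\hat{\gamma}_{n,\alpha}}(d\hat{x})\,\lambda_n(d\alpha),
\end{equation*}
where $\mathsf{c}(\hat{x})$ denotes the $\mathsf{c}$-coordinate of the point $\hat{x}\in \hat{\gamma}_{n,\alpha}$. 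Replacing $\mathsf{c}(\hat{x})$ by the representative $\mathsf{c}_{n,\alpha}$ in both factors $A'$ and $g$ introduces an error controlled by the uniform bound~\eqref{eqn_uCurveExt_variationC}, which yields $|\mathsf{c}(\hat{x})-\mathsf{c}_{n,\alpha}|\leq C'(\eta_{\mathrm{max}}+\eta_{1,\mathrm{max}})\,\mathsf{c}(\hat{x})$ along each component, together with the Lipschitz bound $|\partial_{\mathsf c}g|\leq \eta_{\mathrm{max}}+\eta_{1,\mathrm{max}}$ and the pointwise bound $|g|\leq \eta_{\mathrm{max}}\mathsf{c}$ from~\eqref{eqn_estimates_bFlowSpeedInc}, and the uniform bounds on $A'$ and $A''$. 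This produces per index $k$ an error of order $(\eta_{\mathrm{max}}+\eta_{1,\mathrm{max}})^2\,\mathsf{c}(\hat{x})\,\bigl[\mathsf{c}(\hat{x})\left| \, A'' \, \right|_\infty + \left| \, A' \, \right|_\infty\bigr]$; summing over $k=0,\dots,m-1$ and undoing the pushforward back to $\nu_{\mathcal{G}_0}$ gives exactly the claimed $m\cdot C$ contribution.

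The one point that requires care is the uniformity in $n$ of the constant $C'$ in the variation bound~\eqref{eqn_uCurveExt_variationC}. This is exactly what Lemma~\ref{lem_uCones_invariance} together with Assumption~\ref{assume_assumption1} provides: the slope bound $\kappa$ on $\hat{\mathcal{C}}^u$ is fixed once and for all (and is itself $O(\eta_{\mathrm{max}}+\eta_{1,\mathrm{max}})$ in the small-dissipation regime), and each component $\hat{\gamma}_{n,\alpha}$ of the invariant proper family $\mathcal{G}_n$ is a weakly homogeneous unstable curve of length at most $|\partial Q|\wedge(\pi/\mathcal{V}_{\mathrm{min}})$, so that the exponential in~\eqref{eqn_uCurveExt_variationC} can be linearized uniformly. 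Once this uniformity is in place the remainder of the argument is essentially bookkeeping: combine the $\tfrac{1}{2}m^2$ contribution from Step~1 and the $m\cdot C$ contribution from the replacement into the single factor $m(\tfrac{1}{2}m+C)$ and absorb the various multiplicative constants into $C$.
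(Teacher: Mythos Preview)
Your proposal is correct and follows essentially the same route as the paper's proof: integrate \eqref{eqn_est_dA_1}, push forward to the proper standard family $\mathcal{G}_n$, decompose into standard pairs, and then use the variation bound \eqref{eqn_uCurveExt_variationC} together with the estimates \eqref{eqn_estimates_bFlowSpeedInc} to replace $\mathsf c(\hat x)$ by $\mathsf c_{n,\alpha}$ in both $A'$ and $g$, producing exactly the $Q_1$, $Q_2$ errors the paper records. Your remark on the uniformity in $n$ of the constant $C'$ via Assumption~\ref{assume_assumption1} is the one point the paper leaves implicit, so your write-up is in fact slightly more explicit there.
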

\begin{proof}
  From the preceding discussion
  leading to \eqref{eqn_est_dA_1}
  we obtain
  \begin{align*}
    \int
    [ A(\mathsf c_{n+m}) - A(\mathsf c_n) ]
    \,\nu_{ {\mathcal G}_0 }(d\hat{x}_0)
    &=
    \int
    \sum_{k=0}^{m-1}
    A'( \mathsf c )
    \,g({\mathcal F}\circ \Pi \circ \hat{{\mathcal F}}^k(\hat{x}),
    \mathsf c)
    \,\nu_{ {\mathcal G}_n }(d\hat{x})
    \\
    &\quad
    +
    \int
    ( R_{n,m}^{(1)} + R_{n,m}^{(2)} )
    \,\nu_{ {\mathcal G}_0 }(d\hat{x}_0)
  \end{align*}
  with
  \begin{align*}
    \Big|
    \int
    ( R_{n,m}^{(1)} + R_{n,m}^{(2)} )
    \,\nu_{ {\mathcal G}_0 }(d\hat{x}_0)
    \Big|
    &\leq
    \frac{1}{2}\,( \eta_{\mathrm{max}} + \eta_{1,\mathrm{max}} )^2 \,m^2
    \times
    \\
    &\quad
    \times
    \int
    \Big(
    \mathsf c_n
    \,\left| \,  A''  \, \right|_\infty
    +
    \,\left| \,  A'  \, \right|_\infty
    \Big)
    \,\mathsf c_n
    \,\nu_{ {\mathcal G}_0 }(d\hat{x}_0)
    \;.
  \end{align*}
  Writing the average with respect to
  $\nu_{ {\mathcal G}_n }$
  in terms of the individual standard pairs of the standard
  family, recall \eqref{eqn_def_measureG}, we have
  \begin{align*}
    \int
    &
    A'( \mathsf c )
    \,g({\mathcal F}\circ \Pi \circ \hat{{\mathcal F}}^k(\hat{x}),
    \mathsf c)
    \,\nu_{ {\mathcal G}_n }(d\hat{x})
    \\
    &=
    \int_{{\mathcal A}_n}
    \int_{\hat{\gamma}_{n,\alpha}}
    A'( \mathsf c )
    \,g({\mathcal F}\circ \Pi \circ \hat{{\mathcal F}}^k(\hat{x}),
    \mathsf c)
    \,\hat{\rho}_{n,\alpha}(\hat{x})
    \, \mathrm{m}_{\hat{\gamma}_{n,\alpha}} (d\hat{x})
    \,\lambda_n(d\alpha)
    \;.
  \end{align*}

  By \eqref{eqn_uCurveExt_variationC},
  the variation of $\mathsf c$ along any $\hat{\gamma}_{n,\alpha}$
  is small. In particular, there exists a constant $C>0$
  such that
  \begin{equation*}
    | \mathsf c_{n,\alpha} - \mathsf c |
    \leq
    (\eta_{\mathrm{max}} + \eta_{1,\mathrm{max}} )\,C \,\mathsf c
    \quad\text{for all}\quad
    \hat{x} \in \hat{\gamma}
    \;.
  \end{equation*}
  With this a straightforward argument similar to
  the above derivation of the estimate for
  $
  |
  \int
  ( R_{n,m}^{(1)} + R_{n,m}^{(2)} )
  \,\nu_{ {\mathcal G}_0 }(d\hat{x}_0)
  |
  $
  yields
  \begin{align*}
    \int
    &
    A'( \mathsf c )
    \,g({\mathcal F}\circ \Pi \circ \hat{{\mathcal F}}^k(\hat{x}),
    \mathsf c)
    \,\nu_{ {\mathcal G}_n }(d\hat{x})
    \\
    &=
    \int_{{\mathcal A}_n}
    \int_{\hat{\gamma}_{n,\alpha}}
    A'( \mathsf c_{n,\alpha} )
    \,g({\mathcal F}\circ \Pi \circ \hat{{\mathcal F}}^k(\hat{x}),
    \mathsf c_{n,\alpha} )
    \,\hat{\rho}_{n,\alpha}(\hat{x})
    \, \mathrm{m}_{\hat{\gamma}_{n,\alpha}} (d\hat{x})
    \,\lambda_n(d\alpha)
    +
    Q_1
    +
    Q_2
  \end{align*}
  where
  \begin{align*}
    |Q_1|
    &\leq
    (\eta_{\mathrm{max}} + \eta_{1,\mathrm{max}} )^2
    \,C
    \,\left| \,  A'  \, \right|_\infty
    \int \mathsf c_n \,\nu_{ {\mathcal G}_0 }(d\hat{x}_0)
    \\
    |Q_2|
    &\leq
    \eta_{\mathrm{max}} \,(\eta_{\mathrm{max}} + \eta_{1,\mathrm{max}} )
    \,C
    \left| \,  A''  \, \right|_\infty
    \int \mathsf c_n^2 \,\nu_{ {\mathcal G}_0 }(d\hat{x}_0)
    \;.
  \end{align*}
  Combining these yields the claimed estimate.
\end{proof}

At this point we would like to comment on an important aspect of the
result of Lemma~\ref{lem_est_dA_2}. Namely, replacing
$\mathsf c$ by some $\mathsf c_{n,\alpha}$
in each of the integrals along $\hat{\gamma}_{n,\alpha}$
turns out to be essential in order to proceed with the
analysis of these integrals. This is because although
$\mathsf c$ is uniformly close to $\mathsf c_{n,\alpha}$
it does change along $\hat{\gamma}_{n,\alpha}$. Using
$\mathsf c_{n,\alpha}$ instead 
makes the sum $\sum_{k=0}^{m-1}$ appearing in
the statement of Lemma~\ref{lem_est_dA_2} a Birkhoff
sum along the orbit of $\hat{{\mathcal F}}^k$, where only the fast
coordinates $\Pi\hat{{\mathcal F}}^k$ are sampled. This is key
in the approximation by orbits of the standard
billiard map ${\mathcal F}$. It is precisely in this approximation
scheme where we will use the shadowing methods developed in
\cite{MR2034323,MR2031432,MR2241812}, as will be explained next.

Suppose the same notation and setting as in Lemma~\ref{lem_est_dA_2},
and consider one of the terms appearing inside the integral
representation derived in Lemma~\ref{lem_est_dA_2}
\begin{equation*}
  \int_{\hat{\gamma}}
  f( {\mathcal F}\circ \Pi \circ \hat{{\mathcal F}}^k(\hat{x}) )
  \,\hat{\rho}(\hat{x})
  \, \mathrm{m}_{\hat{\gamma}} (d\hat{x})
  \;,
\end{equation*}
where we set
\begin{equation*}
  f( x )
  =
  A'( \mathsf c_{n,\alpha} )
  \,g( x, \mathsf c_{n,\alpha} )
  \;,\quad
  \hat{\gamma} = \hat{\gamma}_{n,\alpha}
  \;,\quad
  \hat{\rho} = \hat{\rho}_{n,\alpha}
\end{equation*}
to shorten the notation.
With the same notation as in 
Item~(\ref{item_lem_growth_lemma_c}) of Lemma~\ref{lem_growth_lemma}
fix two integers $k_1, k_2$ such that
$ c_5\,| \log |\hat{\gamma}| | < k_1 < k_2 < k $,
whose precise values will
be chosen as we go on. Define on $\hat{\gamma}$ a stopping time
$\tau \colon \hat{\gamma} \to {\mathbb N} \cup \{ \infty \}$
by (recall the notation $\hat{\gamma}_j(\hat{x})$ introduced right
before Lemma~\ref{lem_growth_lemma})
\begin{equation*}
  \tau(\hat{x}) = \inf \{ j > k_1 \operatorname{:}
  | \hat{\gamma}_j(\hat{x}) | \geq \zeta_0 \}
  \;,
\end{equation*}
i.e. after $\tau$ many iterations of $\hat{x}$ by $\hat{{\mathcal F}}$
the component containing its image has a length of at least $\zeta_0$,
which will be important when estimating some of the error terms.
It follows from
Item~(\ref{item_lem_growth_lemma_c}) of Lemma~\ref{lem_growth_lemma}
that
\begin{equation*}
   \mathrm{m}_{\hat{\gamma}} \{ \tau \geq k_2 \}
  \leq
  c_6\,\theta_1^{k_2-k_1}\,| \hat{\gamma}|
\end{equation*}
in other words, only on an exponentially small fraction of
$\hat{\gamma}$ the value of $\tau$ is larger that $k_2$.
Denote the connected components of partition of $\hat{\gamma}$
into $ \{ \tau = i \}$, $k_1 <i <k_2$, by
$( \hat{\gamma}^{(i)}_0 )_i$, and denote by
$ ( \lambda^{(i)} )_i $,
$ ( \hat{\rho}^{(i)}_0 )_i $
$ ( \tau^{(i)} )_i$
the corresponding statistical weights, conditional probability 
densities, and $\tau$--values, respectively.
By construction, for every $i$ the image
$\hat{{\mathcal F}}^{\tau^{(i)} }( \hat{\gamma}^{(i)}_0, \hat{\rho}^{(i)}_0 )$
of the standard pair
$( \hat{\gamma}^{(i)}_0, \hat{\rho}^{(i)}_0 )$
consists of a single standard pair, which we will denote by
$( \hat{\gamma}^{(i)}, \hat{\rho}^{(i)} )$.
Then
\begin{equation}
  \label{eqn_sumf_1}
  \begin{split}
    \int_{\hat{\gamma}}
    &
    f( {\mathcal F}\circ \Pi \circ \hat{{\mathcal F}}^k(\hat{x}) )
    \,\hat{\rho}(\hat{x})
    \, \mathrm{m}_{\hat{\gamma}} (d\hat{x})
    =
    \\
    &=
    \sum_{i} \lambda^{(i)}
    \int_{\hat{\gamma}^{(i)}_0}
    f( {\mathcal F}\circ \Pi \circ
    \hat{{\mathcal F}}^{k-\tau^{(i)}}
    \circ
    \hat{{\mathcal F}}^{\tau^{(i)}}(\hat{x})
    )
    \,\hat{\rho}^{(i)}_0(\hat{x})
    \, \mathrm{m}_{\hat{\gamma}^{(i)}_0} (d\hat{x})
    +
    R^{(3)}
    \\
    &=
    \sum_{i} \lambda^{(i)}
    \int_{\hat{\gamma}^{(i)}}
    f( {\mathcal F}\circ \Pi \circ \hat{{\mathcal F}}^{k-\tau^{(i)}}(\hat{x}) )
    \,\hat{\rho}^{(i)}(\hat{x})
    \, \mathrm{m}_{\hat{\gamma}^{(i)}} (d\hat{x})
    +
    R^{(3)}
  \end{split}
\end{equation}
with
\begin{align*}
  | R^{(3)} |
  \leq
  \eta_{\mathrm{max}} \,\left| \,  A'  \, \right|_\infty \,\mathsf c_{n,\alpha}
  \,\Big[ 1 - \sum_{i} \lambda^{(i)} \Big]
  \;,\quad
  1 - \sum_{i} \lambda^{(i)}
  =
   \mathrm{m}_{\hat{\gamma}} \{ \tau \geq k_2 \}
  \leq
  c_6\,\theta_1^{k_2-k_1}\,| \hat{\gamma}|
  \;.
\end{align*}
Next, consider any of the terms in \eqref{eqn_sumf_1} separately,
and note that the identity
\begin{equation}
  \label{eqn_sumf_termSep1}
  \begin{split}
    f( {\mathcal F}\circ \Pi \circ \hat{{\mathcal F}}^{k-\tau^{(i)}}(\hat{x}) )
    &=
    f( {\mathcal F}^{1 + k-\tau^{(i)}} \circ \Pi(\hat{x}) )
    +
    \\
    &\quad
    +
    \sum_{l=1}^{ k-\tau^{(i)} }
    \Big[
    f( {\mathcal F}^{1+ k-\tau^{(i)} -l}\circ
    \Pi \circ \hat{{\mathcal F}}^l(\hat{x}) )
    -
    \\
    &\quad
    \qquad\qquad
    -
    f( {\mathcal F}^{1+ k-\tau^{(i)} -l}\circ
    \Pi\circ P^{-1} \circ \hat{{\mathcal F}}^l(\hat{x}) )
    \Big]
    \;.
  \end{split}
\end{equation}
holds for all $\hat{x}$, we obtain for each of the terms in
\eqref{eqn_sumf_1} the expression
\begin{equation}
  \label{eqn_sumf_2}
  \begin{split}
    \int_{\hat{\gamma}^{(i)}}
    &
    f( {\mathcal F}\circ \Pi \circ \hat{{\mathcal F}}^{k-\tau^{(i)}}(\hat{x}) )
    \,\hat{\rho}^{(i)}(\hat{x})
    \, \mathrm{m}_{\hat{\gamma}^{(i)}} (d\hat{x})
    \\
    &=
    \int_{\hat{\gamma}^{(i)}}
    f( {\mathcal F}^{1 + k-\tau^{(i)}} \circ \Pi(\hat{x}) )
    \,\hat{\rho}^{(i)}(\hat{x})
    \, \mathrm{m}_{\hat{\gamma}^{(i)}} (d\hat{x})
    \\
    &\quad
    +
    \sum_{l=1}^{ k-\tau^{(i)} }
    \int_{\hat{\gamma}^{(i)}}
    \Big[
    f( {\mathcal F}^{1+ k-\tau^{(i)} -l}\circ
    \Pi \circ \hat{{\mathcal F}}^l(\hat{x}) )
    -
    \\
    &\quad
    \qquad\qquad\qquad
    -
    f( {\mathcal F}^{1+ k-\tau^{(i)} -l}\circ
    \Pi\circ P^{-1} \circ \hat{{\mathcal F}}^l(\hat{x}) )
    \Big]
    \,\hat{\rho}^{(i)}(\hat{x})
    \, \mathrm{m}_{\hat{\gamma}^{(i)}} (d\hat{x})
    \;.
  \end{split}
\end{equation}

The following Lemma~\ref{lem_eqn_sumf_2_term1} provides an
approximation for the first of the two terms on the right-hand-side
of \eqref{eqn_sumf_2}.
Recall that
$
\nu(ds, d\varphi)
=
\frac{ \cos\varphi\,d\varphi\,ds }{2\,|\partial Q|}
$
is the invariant measure for the billiard map ${\mathcal F}$.
\begin{lemma}
  \label{lem_eqn_sumf_2_term1}
  For any $\eta_{\mathrm{max}}^*$, $\eta_{1,\mathrm{max}}^*$, $\eta_{2,\mathrm{max}}^*$,
  there exist $\theta < 1$, $0<C$ such that
  \begin{align*}
    \Big|
    \int_{\hat{\gamma}^{(i)}}
    &
    f( {\mathcal F}^{1 + k-\tau^{(i)}} \circ \Pi(\hat{x}) )
    \,\hat{\rho}^{(i)}(\hat{x})
    \, \mathrm{m}_{\hat{\gamma}^{(i)}} (d\hat{x})
    -
    A'( \mathsf c_{n,\alpha} )
    \int_{{\mathcal M}}
    g( x, \mathsf c_{n,\alpha} )
    \,\nu(dx)
    \Big|
    \\
    &\leq
    C
    \,( \eta_{\mathrm{max}} + \eta_{1,\mathrm{max}} )
    \,\left| \,  A'  \, \right|_\infty
    \,\mathsf c_{n,\alpha}
    \,\theta^{ 1 + k-\tau^{(i)} }
  \end{align*}
  uniformly in
  $\eta_{\mathrm{max}}$, $\eta_{1,\mathrm{max}}$, $\eta_{2,\mathrm{max}}$
  provided that they are chosen less than
  $\eta_{\mathrm{max}}^*$, $\eta_{1,\mathrm{max}}^*$, $\eta_{2,\mathrm{max}}^*$,
  respectively.
\end{lemma}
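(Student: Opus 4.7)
The plan is to exploit the fact that the integrand $f \circ {\mathcal F}^{1+k-\tau^{(i)}} \circ \Pi$ depends on $\hat{x} \in \hat{\gamma}^{(i)}$ only through its projection $\Pi(\hat{x}) \in {\mathcal M}$, so the integral on $\hat{\gamma}^{(i)}$ can be rewritten as an integral on the projected unstable curve $\gamma^{(i)} = \Pi(\hat{\gamma}^{(i)}) \subset {\mathcal M}$, and then to invoke the standard equidistribution of proper standard pairs under iterates of the elastic billiard map ${\mathcal F}$.

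First, I would parametrize $\hat{\gamma}^{(i)}$ by arc length $s$ and push forward the measure $\hat{\rho}^{(i)}\, d\mathrm{m}_{\hat{\gamma}^{(i)}}$ under $\Pi$ to produce a density $\rho^{(i)} = \Pi_* \hat{\rho}^{(i)}$ on $\gamma^{(i)}$. The bound $|d\mathsf c / (\mathsf c\,\cos\varphi\,ds)| \leq \kappa$ from the definition \eqref{eqn_def_uConeExt} of $\hat{{\mathcal C}}^u$, together with the adapted norm equivalence \eqref{eqn_norm_uVectors_equivalence}, shows that $\Pi|_{\hat{\gamma}^{(i)}}$ is essentially an isometry with Jacobian equal to $1$ up to an error of order $(\eta_{\mathrm{max}}+\eta_{1,\mathrm{max}})^2$. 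By Lemma~\ref{lem_projection_pG} the resulting pair $(\gamma^{(i)}, \rho^{(i)})$ is a proper standard pair for ${\mathcal F}$, with length $|\gamma^{(i)}| \asymp |\hat{\gamma}^{(i)}| \geq \zeta_0$. Pulling the constant $A'(\mathsf c_{n,\alpha})$ out of the integral reduces the problem to estimating
\[
\Bigl| \int_{\gamma^{(i)}} g({\mathcal F}^{N}(y), \mathsf c_{n,\alpha})\,\rho^{(i)}(y)\,d\mathrm{m}_{\gamma^{(i)}}(y)
- \int_{{\mathcal M}} g(x, \mathsf c_{n,\alpha})\,\nu(dx) \Bigr| \,,
\]
with $N = 1 + k - \tau^{(i)}$, by a multiple of $(\eta_{\mathrm{max}}+\eta_{1,\mathrm{max}})\,\mathsf c_{n,\alpha}\,\theta^{N}$.

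Next, I would invoke the equidistribution theorem for proper standard pairs for ${\mathcal F}$, which is a direct consequence of the growth lemma (Lemma~\ref{lem_growth_lemma}) and the coupling lemma established in the hyperbolic billiard literature \cite{MR2229799,MR2737493}: there exist $\theta < 1$ and $C>0$, independent of $\eta_{\mathrm{max}}, \eta_{1,\mathrm{max}}, \eta_{2,\mathrm{max}}$ below the thresholds, such that for any proper standard pair $(\gamma, \rho)$ and any dynamically H\"older test function $h$ on ${\mathcal M}$ one has
\[
\Bigl| \int_{\gamma} h({\mathcal F}^{N}(y))\,\rho(y)\,d\mathrm{m}_{\gamma}(y) - \int_{{\mathcal M}} h\,d\nu \Bigr|
\leq C\,\|h\|_{C^{1/2}}\,\theta^{N} \,.
\]
Applying this to $h(x) = g(x,\mathsf c_{n,\alpha})$ and using the estimates \eqref{eqn_estimates_bFlowSpeedInc} (together with $\partial_s g = 0$) to bound both $\|h\|_\infty$ and its H\"older seminorm by a universal multiple of $(\eta_{\mathrm{max}}+\eta_{1,\mathrm{max}})\,\mathsf c_{n,\alpha}$ produces the claimed inequality after multiplication by the prefactor $|A'(\mathsf c_{n,\alpha})| \leq \left| \, A' \, \right|_\infty$.

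The principal obstacle is really the invocation of exponential equidistribution for proper standard pairs, which is the Chernov--Markarian theorem and is not itself reproved here. However, since the iterated map in this step is the \emph{unperturbed} map ${\mathcal F}$ (the dissipation has already been absorbed into the standard pair $(\gamma^{(i)}, \rho^{(i)})$), this theorem applies verbatim, and the uniformity in the dissipation parameters reduces to the uniformity already established in Lemmas~\ref{lem_curvature_bounds}--\ref{lem_lift_pG} plus the explicit $(\eta_{\mathrm{max}}+\eta_{1,\mathrm{max}})$ dependence of $h$.
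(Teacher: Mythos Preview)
Your proposal is correct and follows essentially the same route as the paper's own proof: project the standard pair $(\hat{\gamma}^{(i)},\hat{\rho}^{(i)})$ to a standard pair $(\gamma^{(i)},\rho^{(i)})$ on ${\mathcal M}$ of length bounded below by a fixed constant (coming from $|\hat{\gamma}^{(i)}|\geq\zeta_0$), rewrite the integral on the projection, and invoke the exponential equidistribution of standard pairs under the unperturbed billiard map ${\mathcal F}$ with the norm of $f(x)=A'(\mathsf c_{n,\alpha})\,g(x,\mathsf c_{n,\alpha})$ supplying the factor $(\eta_{\mathrm{max}}+\eta_{1,\mathrm{max}})\,|A'|_\infty\,\mathsf c_{n,\alpha}$ via \eqref{eqn_estimates_bFlowSpeedInc}. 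The only cosmetic difference is that the paper phrases the equidistribution bound in terms of $\|f\|_{C^1}$ and handles the threshold $N\geq K|\log|\gamma^{(i)}||$ explicitly (absorbing it into $C$ using the uniform lower bound on $|\gamma^{(i)}|$), whereas you phrase it in terms of a dynamical H\"older norm and invoke Lemma~\ref{lem_projection_pG} to assert properness directly; both amount to the same argument.
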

\begin{proof}
  Denote the projection
  $ \Pi (\hat{\gamma}^{(i)}, \hat{\rho}^{(i)} )$
  of the standard pair
  $ (\hat{\gamma}^{(i)}, \hat{\rho}^{(i)} )$
  by
  $ (\gamma^{(i)}, \rho^{(i)} )$, so that
  \begin{align*}
    \int_{\hat{\gamma}^{(i)}}
    f( {\mathcal F}^{1 + k-\tau^{(i)}} \circ \Pi(\hat{x}) )
    \,\hat{\rho}^{(i)}(\hat{x})
    \, \mathrm{m}_{\hat{\gamma}^{(i)}} (d\hat{x})
    =
    \int_{\gamma^{(i)}}
    f( {\mathcal F}^{1 + k-\tau^{(i)}}(x) )
    \,\rho^{(i)}(x)
    \,\mathrm{m}_{\gamma^{(i)}}(dx)
    \;.
  \end{align*}
  Since the length of $ \hat{\gamma}^{(i)} $ is bounded from below
  $| \hat{\gamma}^{(i)} | \geq \zeta_0$, it follows that
  $ |\gamma^{(i)}|$
  is bounded from below by some small fixed constant, say $\delta$.
  The result now follows from the equi-distribution property
  of the billiard map ${\mathcal F}$, e.g. \cite{MR2499824,MR2229799},
  with the error bound given by
  $
  C\,\left\Arrowvert \, f \, \right\Arrowvert_{C^1}\,\theta^{ 1 + k-\tau^{(i)} }
  $
  whenever
  $ 1 + k-\tau^{(i)} \geq K\,| \log |\gamma^{(i)}| |$,
  where $\theta<1$, $C>0$ and $K>0$ are some constants.
  Since the length $|\gamma^{(i)}| \geq \delta$ it suffices to
  have
  $ 1 + k-\tau^{(i)} \geq K'$
  for some fixed constant $K'>0$. At the expense of increasing
  the value of $C$ this condition on
  $ 1 + k-\tau^{(i)} $ can be dropped.
  Recall that we defined
  $
  f( x )
  =
  A'( \mathsf c_{n,\alpha} )
  \,g( x, \mathsf c_{n,\alpha} )
  $.
  With \eqref{eqn_estimates_bFlowSpeedInc} we have
  \begin{equation*}
    \left\Arrowvert \,  f  \, \right\Arrowvert_{C^1}
    \leq
    \left| \,  A'  \, \right|_\infty
    \,\Big[
    \eta_{\mathrm{max}} + \eta_{1,\mathrm{max}}
    +
    \frac{ ( 2 - \eta_{\mathrm{max}} )\,\eta_{\mathrm{max}} }{ 1 - \eta_{\mathrm{max}} }
    \Big]
    \,\mathsf c_{n,\alpha}
    \;,
  \end{equation*}
  and hence we can rewrite the error bound in the claimed form.
\end{proof}

It remains to estimate the second term on the right-hand-side of
\eqref{eqn_sumf_2}. In the proof of
Lemma~\ref{lem_eqn_sumf_2_term1} we made use of the fact that
projection of standard pairs in $\hat{{\mathcal M}}$ are standard
pairs in ${\mathcal M}$, so that well-known results on the
equi-distribution property of the billiard map ${\mathcal F}$ could
be applied. The second term in \eqref{eqn_sumf_2} is different,
and will be estimated by a shadowing argument developed
in \cite{MR2034323,MR2031432,MR2241812}. Here we will not
repeat the fairly lengthy details of this argument, rather
we will explain how it is being used in our present setting.

Consider one of the summands in \eqref{eqn_sumf_2}
\begin{align*}
  S
  =
  \int_{\hat{\gamma}^{(i)}}
  &
  \Big[
  f( {\mathcal F}^{1+ k-\tau^{(i)} -l}\circ
  \Pi \circ \hat{{\mathcal F}}^l(\hat{x}) )
  -
  \\
  &\quad\qquad
  -
  f( {\mathcal F}^{1+ k-\tau^{(i)} -l}\circ
  \Pi\circ P^{-1} \circ \hat{{\mathcal F}}^l(\hat{x}) )
  \Big]
  \,\hat{\rho}^{(i)}(\hat{x})
  \, \mathrm{m}_{\hat{\gamma}^{(i)}} (d\hat{x})
\end{align*}
which appear in the second term on the right-hand-side of
\eqref{eqn_sumf_2}. Note that both terms in the integrand
depend on the integration variable $\hat{x}$ only through
$\hat{{\mathcal F}}^l(\hat{x})$. That is to say that the integral
can be written as an integral with respect to the standard family
$\hat{{\mathcal G}}^{(i)}_l$
corresponding to the image under $\hat{{\mathcal F}}^l$ of the
standard pair $ ( \hat{\gamma}^{(i)}, \hat{\rho}^{(i)} )$
\begin{align*}
  S
  =
  \int_{{\mathcal A}^{(i)}_l}
  \int_{\hat{\gamma}^{(i)}_{l,\alpha} }
  &
  \Big[
  f( {\mathcal F}^{1+ k-\tau^{(i)} -l}\circ\Pi(\hat{x}) )
  -
  \\
  &\quad
  -
  f( {\mathcal F}^{1+ k-\tau^{(i)} -l}\circ
  \Pi\circ P^{-1}(\hat{x}) )
  \Big]
  \,\hat{\rho}^{(i)}_{l,\alpha}(\hat{x})
  \, \mathrm{m}_{ \hat{\gamma}^{(i)}_{l,\alpha}} (d\hat{x})
  \,\lambda^{(i)}_l(d\alpha)
  \;.
\end{align*}
As in the proof of Lemma~\ref{lem_eqn_sumf_2_term1}
set
$
(\gamma^{(i)}_{l,\alpha}, \rho^{(i)}_{l,\alpha} )
=
\Pi (\hat{\gamma}^{(i)}_{l,\alpha}, \hat{\rho}^{(i)}_{l,\alpha} )
$, but due to the presence of $P^{-1}$ in the integrand
we cannot write the above integrals in terms of
$(\gamma^{(i)}_{l,\alpha}, \rho^{(i)}_{l,\alpha} )$ only.
However, since
$\hat{\gamma}^{(i)}_{l,\alpha}$ can be parametrized by
$s$, and $\Pi$ as well as $P^{-1}$ leave
$s$ unchanged we see that
both curves
\begin{equation*}
  \gamma^{(i)}_{l,\alpha}
  =
  \Pi\hat{\gamma}^{(i)}_{l,\alpha}
  \quad\text{and}\quad
  \gamma^{(i,2)}_{l,\alpha}
  =
  \Pi\circ P^{-1}\hat{\gamma}^{(i)}_{l,\alpha}
\end{equation*}
can be parametrized by $s$ (over the exact same
domain).
In particular, the induced mapping
\begin{equation*}
  \Phi^{(i)}_{l,\alpha}
  \colon
  \gamma^{(i)}_{l,\alpha}
  \to
  \gamma^{(i,2)}_{l,\alpha}
  \quad\text{with}\quad
  \Phi^{(i)}_{l,\alpha}\circ \Pi(\hat{x})
  =
  \Pi\circ P^{-1}(\hat{x})
  \quad\forall\;\hat{x} \in \hat{\gamma}^{(i)}_{l,\alpha}
\end{equation*}
is well-defined, smooth, and its difference to the identity map
in the $C^1$--norm and in the $C^2$--norm is
$\operatorname{\mathcal{O}}(\eta_{\mathrm{max}}^* + \eta_{1,\mathrm{max}}^*)$,
$\operatorname{\mathcal{O}}(\eta_{\mathrm{max}}^* + \eta_{1,\mathrm{max}}^* + \eta_{2,\mathrm{max}}^*)$,
respectively, uniformly in the parameters.
With this notation in place we have
\begin{equation}
  \label{eqn_expressionS}
  \begin{split}
    S
    =
    \int_{{\mathcal A}^{(i)}_l}
    \int_{\gamma^{(i)}_{l,\alpha} }
    &
    \Big[
    f( {\mathcal F}^{1+ k-\tau^{(i)} -l}(x) )
    -
    f( {\mathcal F}^{1+ k-\tau^{(i)} -l}\circ
    \Phi^{(i)}_{l,\alpha}(x) )
    \Big]
    \times
    \\
    &\quad
    \qquad\qquad
    \times
    \rho^{(i)}_{l,\alpha}(x)
    \,\mathrm{m}_{ \gamma^{(i)}_{l,\alpha}}(dx)
    \,\lambda^{(i)}_l(d\alpha)
    \;.
  \end{split}
\end{equation}
This identity is key in order to apply the shadowing arguments of
\cite{MR2034323,MR2031432,MR2241812}.
\begin{figure}[ht!]
  \centering
  \includegraphics[height=5cm]{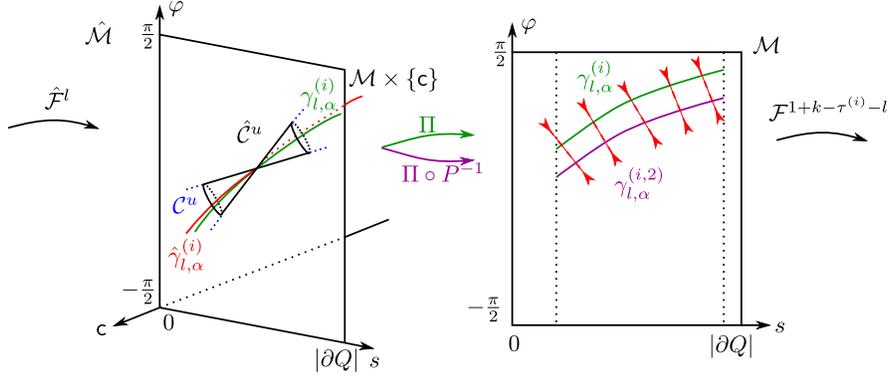}
  \caption{An illustration of the shadowing argument.
  The stable manifolds of ${\mathcal F}$ that provide the coupling
  between the two curves
  $\gamma^{(i)}_{l,\alpha}$ and $\gamma^{(i,2)}_{l,\alpha}$
  in ${\mathcal M}$ are shown on the right side as dashed
  lines in red color.}
  \label{fig_shadowing}
\end{figure}
Indeed, the two curves
$\gamma^{(i)}_{l,\alpha}$ and $\gamma^{(i,2)}_{l,\alpha}$
are $\operatorname{\mathcal{O}}(\eta_{\mathrm{max}}^* + \eta_{1,\mathrm{max}}^*)$--close in ${\mathcal M}$.
Therefore, as long as their lengths, i.e the length of
$\hat{\gamma}^{(i)}_{l,\alpha}$ is not too small, for example
a large enough constant times $(\eta_{\mathrm{max}}^* + \eta_{1,\mathrm{max}}^*)$,
then they are connected by stable manifolds of ${\mathcal F}$ up to
a subset (one each of the two curves) of Lebesgue measure bounded
by a (small) constant times $\operatorname{\mathcal{O}}(\eta_{\mathrm{max}}^* + \eta_{1,\mathrm{max}}^*)$.
The images of these coupled points under
${\mathcal F}^{1+ k-\tau^{(i)} -l}$
have a distance that is bounded by
$
C\,\theta^{1+ k-\tau^{(i)} -l}
\,(\eta_{\mathrm{max}}^* + \eta_{1,\mathrm{max}}^*)
$,
where $C>0$, and $\theta<1$ are some independent constants.
Therefore, a simple $\left\Arrowvert \,  f  \, \right\Arrowvert_{C^1}$ estimate shows that
for these points the integrands in
\eqref{eqn_expressionS} are bounded by
$
C\,\left| \, A' \, \right|_\infty\,\theta^{1+ k-\tau^{(i)} -l}
\,(\eta_{\mathrm{max}}^* + \eta_{1,\mathrm{max}}^*)^2
$. This procedure is illustrated in Fig.~\ref{fig_shadowing}.
Details of how to estimate the residual measure, and the measure
of those curves $\hat{\gamma}^{(i)}_{l,\alpha}$
which are too short, and how to adjust the values of $k_1$, $k_2$
accordingly can be found in
\cite{MR2034323,MR2031432,MR2241812}
in a similar setting.
Combining this with the results of
Lemma~\ref{lem_est_dA_2} and Lemma~\ref{lem_eqn_sumf_2_term1}
we summarize the results in this section in following
Theorem~\ref{thm_avg1}.

Define
\begin{equation}
  \label{eqn_def_AvgInc}
  \bar{g}( \mathsf c )
  =
  \int_{{\mathcal M}}
  g( x, \mathsf c )
  \,\nu(dx)
  \qquad
  \text{for all}\quad
  \mathsf c \geq 0
  \;,
\end{equation}
which is the average (with respect to the invariant measure
$
\nu(ds, d\varphi)
=
\frac{ \cos\varphi\,d\varphi\,ds }{2\,|\partial Q|}
$
of the billiard map ${\mathcal F}$) increment in $\mathsf c_n$.

\begin{theorem}
  \label{thm_avg1}
  For any $\eta_{\mathrm{max}}^*$, $\eta_{1,\mathrm{max}}^*$, $\eta_{2,\mathrm{max}}^*$,
  there exists a $C>0$ such that for all $n$ and all $m\geq 1$
  \begin{align*}
    \Big|
    \int
    &
    \Big(
    A(\mathsf c_{n+m}) - A(\mathsf c_n)
    -
    \sum_{k=0}^{m-1}
    A'( \mathsf c_{n+k} )
    \,\bar{g}( \mathsf c_{n+k} )
    \Big)
    \,\nu_{ {\mathcal G}_0 }(d\hat{x}_0)
    \Big|
    \\
    &\leq
    C
    \,[
    (\eta_{\mathrm{max}}^* + \eta_{1,\mathrm{max}}^* )
    +
    (\eta_{\mathrm{max}}^* + \eta_{1,\mathrm{max}}^* )^2\,m^2
    ]
    \times
    \\
    &\quad \qquad
    \times
    \int
    ( \mathsf c_n \,\left| \,  A''  \, \right|_\infty + \left| \,  A'  \, \right|_\infty )
    \,\mathsf c_n
    \,\nu_{ {\mathcal G}_0 }(d\hat{x}_0)
    \;,
  \end{align*}
  uniformly in
  $\eta_{\mathrm{max}}$, $\eta_{1,\mathrm{max}}$, $\eta_{2,\mathrm{max}}$
  provided that they are chosen less than
  $\eta_{\mathrm{max}}^*$, $\eta_{1,\mathrm{max}}^*$, $\eta_{2,\mathrm{max}}^*$,
  respectively.
\end{theorem}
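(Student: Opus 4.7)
The strategy is to build on Lemma~\ref{lem_est_dA_2}, whose residual $R_{n,m}$ already supplies the $(\eta_{\mathrm{max}}^*+\eta_{1,\mathrm{max}}^*)^2 m^2$ piece of the announced bound. It then suffices to show that, for every $k\in\{0,\ldots,m-1\}$, the inner integral on the right-hand-side of Lemma~\ref{lem_est_dA_2} agrees with $\int A'(\mathsf c_{n+k})\bar g(\mathsf c_{n+k})\,\nu_{\mathcal G_0}(d\hat x_0)$ up to an error whose sum over $k$ contributes only to the linear-in-$(\eta_{\mathrm{max}}^*+\eta_{1,\mathrm{max}}^*)$ piece.

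Fix $k$ and a standard pair $(\hat\gamma_{n,\alpha},\hat\rho_{n,\alpha})$. First perform the stopping-time decomposition described between \eqref{eqn_sumf_1} and \eqref{eqn_sumf_2}: cut $\hat\gamma_{n,\alpha}$ at the first time $\tau$ after $k_1$ at which the current image component has length $\geq\zeta_0$ and drop the set $\{\tau\geq k_2\}$, whose measure is exponentially small by Lemma~\ref{lem_growth_lemma}(c). On each surviving piece apply the telescoping identity \eqref{eqn_sumf_termSep1}. For the pure-${\mathcal F}$ term invoke Lemma~\ref{lem_eqn_sumf_2_term1}, which yields $A'(\mathsf c_{n,\alpha})\bar g(\mathsf c_{n,\alpha})$ up to $(\eta_{\mathrm{max}}+\eta_{1,\mathrm{max}})|A'|_\infty\mathsf c_{n,\alpha}\theta^{1+k-\tau^{(i)}}$. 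For each telescoping difference use the representation \eqref{eqn_expressionS} together with the shadowing construction of \cite{MR2034323,MR2031432,MR2241812}: since $\Phi^{(i)}_{l,\alpha}$ differs from the identity by $O(\eta_{\mathrm{max}}+\eta_{1,\mathrm{max}})$ in $C^1$, one couples $\gamma^{(i)}_{l,\alpha}$ to $\gamma^{(i,2)}_{l,\alpha}$ by stable leaves of ${\mathcal F}$ outside a proportionately small residual; coupled images separate at rate $\theta^{1+k-\tau^{(i)}-l}$, and using $\|f\|_{C^1}\lesssim(\eta_{\mathrm{max}}+\eta_{1,\mathrm{max}})|A'|_\infty\mathsf c_{n,\alpha}$ from \eqref{eqn_estimates_bFlowSpeedInc} each summand is bounded by $(\eta_{\mathrm{max}}+\eta_{1,\mathrm{max}})^2|A'|_\infty\mathsf c_{n,\alpha}\theta^{1+k-\tau^{(i)}-l}$. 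Summing over $l$, over the $(\lambda^{(i)},\tau^{(i)})$-partition, and over the residual and short-curve sets leaves a per-$k$ error $\lesssim(\eta_{\mathrm{max}}+\eta_{1,\mathrm{max}})|A'|_\infty\mathsf c_{n,\alpha}$.

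Summing this bound over $k=0,\ldots,m-1$ produces a second geometric series $\sum_k\theta^{k-k_0}=O(1)$, contributing $\lesssim(\eta_{\mathrm{max}}+\eta_{1,\mathrm{max}})\int|A'|_\infty\mathsf c_n\,d\nu_{\mathcal G_0}$, which is the linear piece of the announced bound. It remains to swap $A'(\mathsf c_{n,\alpha})\bar g(\mathsf c_{n,\alpha})$ for $A'(\mathsf c_{n+k})\bar g(\mathsf c_{n+k})$ under the integral. By \eqref{eqn_speed_apriori} one has $|\mathsf c_{n+k}-\mathsf c_n|\leq k\eta_{\mathrm{max}}\mathsf c_n$, by \eqref{eqn_uCurveExt_variationC} $|\mathsf c-\mathsf c_{n,\alpha}|\lesssim(\eta_{\mathrm{max}}+\eta_{1,\mathrm{max}})\mathsf c_n$ along $\hat\gamma_{n,\alpha}$, and from \eqref{eqn_estimates_bFlowSpeedInc} $|\bar g|\leq\eta_{\mathrm{max}}\mathsf c$ together with $|\partial_{\mathsf c}\bar g|\leq\eta_{\mathrm{max}}+\eta_{1,\mathrm{max}}$. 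A two-term estimate then charges each swap at cost $\lesssim (k+1)(\eta_{\mathrm{max}}+\eta_{1,\mathrm{max}})^2(|A''|_\infty\mathsf c_n^2+|A'|_\infty\mathsf c_n)$; summing over $k$ gives the remaining $m^2(\eta_{\mathrm{max}}^*+\eta_{1,\mathrm{max}}^*)^2$ piece and, together with $R_{n,m}$, closes the estimate.

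The principal obstacle is not any single analytic ingredient, since the growth lemma, equi-distribution of ${\mathcal F}$, and shadowing/coupling are all already invoked above. Rather, the real work is the joint calibration of $k_1,k_2,\zeta_0$ so that (i) the residual mass in Lemma~\ref{lem_growth_lemma}(c), (ii) the short-curve discards in the shadowing step of \cite{MR2034323,MR2031432,MR2241812}, and (iii) the non-coupled remainder on each image piece all absorb into the $(\eta_{\mathrm{max}}+\eta_{1,\mathrm{max}})$ prefactor, while keeping the geometric series in the shadowing step summable uniformly in the parameters. A second, subtler point is to route the $\mathsf c$-dependence in $g$ into the $|A'|_\infty\mathsf c_n$ term rather than $|A''|_\infty\mathsf c_n^2$, so that the two pieces on the right-hand-side of the theorem retain their stated structure.
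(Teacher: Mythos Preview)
Your proposal follows essentially the same route as the paper: start from Lemma~\ref{lem_est_dA_2}, perform the stopping-time decomposition \eqref{eqn_sumf_1}, apply the telescoping identity \eqref{eqn_sumf_termSep1}--\eqref{eqn_sumf_2}, invoke Lemma~\ref{lem_eqn_sumf_2_term1} for the pure-${\mathcal F}$ term, and handle the remaining differences via the shadowing argument through \eqref{eqn_expressionS}. You also correctly isolate the calibration of $k_1,k_2,\zeta_0$ as the place where the real work lies, and you include the final swap $A'(\mathsf c_{n,\alpha})\bar g(\mathsf c_{n,\alpha})\to A'(\mathsf c_{n+k})\bar g(\mathsf c_{n+k})$, which the paper leaves implicit. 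One minor bookkeeping point: your sentence claiming a uniform per-$k$ error of order $(\eta_{\mathrm{max}}+\eta_{1,\mathrm{max}})$ followed by a geometric series in $k$ conflates two different contributions---only the Lemma~\ref{lem_eqn_sumf_2_term1} error carries the $\theta^{k-\tau^{(i)}}$ decay that makes the sum over $k$ geometric, while the shadowing and residual pieces are already $O((\eta_{\mathrm{max}}+\eta_{1,\mathrm{max}})^2)$ per $k$ and hence feed into the $m^2$ term after summation.
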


\section{Averaged dynamics}
\label{sect_avg}

Recall that due to the a-priori bound \eqref{eqn_speed_apriori}
\begin{equation*}
  1 - \eta_{\mathrm{max}}
  \leq
  \frac{ \mathsf c_1 }{ \mathsf c_0 }
  \leq
  1
\end{equation*}
we are primarily interested in describing $(\mathsf c_n)_n$
for values of $n$ between $0$ and $\operatorname{\mathcal{O}}(\eta_{\mathrm{max}}^{-1})$.
To formalize this, we suppose that we are given
\begin{subequations}
  \label{eqn_dissCoeffFamily}
  a family of functions
  \begin{equation}
    (\eta^{(\epsilon)} )_{0 < \epsilon \ll 1 }
  \end{equation}
  such that for some constant $C>0$
  \begin{equation}
    \eta_{\mathrm{max}}^{(\epsilon)}
    + \eta_{1,\mathrm{max}}^{(\epsilon)}
    + \eta_{2,\mathrm{max}}^{(\epsilon)}
    \leq C\,\epsilon
  \end{equation}
  for all $\epsilon \ll 1$, and
  there exists continuous function
  $q \colon [0,\infty) \to {\mathbb R}$, which
  is $C^1$ on $(0,\infty)$, such that
  \begin{equation}
    \frac{1}{\epsilon}\,\eta^{(\epsilon)}(\mathsf c)
    =
    q(\mathsf c)
    +
    \operatorname{\mathcal{O}}(\epsilon)
    \quad\text{as }\epsilon \to 0
  \end{equation}
  uniformly on any compact subinterval of $[0,\infty)$.
\end{subequations}
Naturally we will use a superscript ${ }^{(\epsilon)}$ to
denote the $\epsilon$--dependence expressions that depend
on $\eta^{(\epsilon)}$, e.g.
$ g^{(\epsilon)} $,
$ \bar{g}^{(\epsilon)} $.
However, to avoid cumbersome notation we will not always explicitly
indicate this $\epsilon$-dependence by additional super-scripts
whenever the context is clear enough.

Furthermore, define the function
$h \colon (0,\infty) \to {\mathbb R}$
by
\begin{equation}
  \label{eqn_def_avgSpeedRate}
  h(\mathsf c)
  =
  -
  \mathsf c
  \int_0^{\frac{\pi}{2}}
  q(\mathsf c\,\cos\varphi )
  \,\cos^3\varphi\,d\varphi
  \;,
\end{equation}
and note that
\begin{equation}
  \label{eqn_def_avgSpeedRate_convergence}
  h(\mathsf c)
  =
  \frac{1}{\epsilon}
  \,\bar{g}^{(\epsilon)}( \mathsf c )
  +
  \operatorname{\mathcal{O}}(\epsilon)
\end{equation}
holds uniformly on any compact subinterval of $[0,\infty)$.

To have a specific example at hand, consider either
the case of (small) constant restitution
\begin{subequations}
  \label{eqn_ex1}
  \begin{equation}
    \eta^{(\epsilon)}(\mathsf c)
    =
    \epsilon
    \quad\text{for all}\quad
    \mathsf c > 0
    \;,
  \end{equation}
  in which case
  \begin{equation}
    h(\mathsf c) = - \frac{2}{3}\,\mathsf c
    \;.
  \end{equation}
\end{subequations}
Or more generally, for any given smooth (and increasing)
function $q \colon [0,\infty) \to [0,1]$ with
\begin{subequations}
  \label{eqn_ex2}
  \begin{equation}
    q(0) =0
    \;,\quad
    \quad
    \sup_{\mathsf c>0} q(\mathsf c)\,\mathsf c
    < \infty
    \;,\quad
    \sup_{\mathsf c>0} q(\mathsf c)\,\mathsf c^2
    < \infty
  \end{equation}
  set
  \begin{equation}
    \eta^{(\epsilon)}(\mathsf c)
    =
    \epsilon\,q( \mathsf c^p )
    \quad\text{for some}\quad p>0
    \;,
  \end{equation}
  in which case
  \begin{equation}
    h(\mathsf c)
    =
    -
    \mathsf c
    \int_0^{\frac{\pi}{2}}
    q(\mathsf c^p\,\cos^p\varphi )
    \,\cos^3\varphi\,d\varphi
  \end{equation}
\end{subequations}
In the study of granular media the
choice \eqref{eqn_ex1} is argued to be non-physical, however it
is a common choice for mathematical studies. The choice
\eqref{eqn_ex2} can be derived from elasticity theory, and
as such is considered in applications. We refer
to \cite{MR2101911} for a detailed account on this topic.

In the following, let
$(\eta^{(\epsilon)} )_{0 < \epsilon \ll 1 }$
be as in \eqref{eqn_dissCoeffFamily}.
We follow the standard procedure used in averaging theory
and fix a number
\begin{equation}
  0 < \bar{T} < \infty
  \;,
\end{equation}
and denote by
$\bar{\mathsf c}(\bar{t})$, $0 \leq \bar{t} \leq \bar{T}$,
solutions to the differential equation
\begin{equation}
  \frac{d}{d\bar{t}} \bar{\mathsf c}(\bar{t})
  =
  h( \bar{\mathsf c}(\bar{t}) )
  \;,\quad
  0 \leq \bar{t} \leq \bar{T}
  \;.
\end{equation}

Furthermore, to any of the trajectory
$(\mathsf c_n^{(\epsilon)})_{n}$
(corresponding to $\eta^{(\epsilon)}$)
we associate a continuous function
$\mathsf c^{(\epsilon)} \in C([0,\bar{T}], {\mathbb R})$ by
\begin{equation}
  \label{eqn_def_path}
  \mathsf c^{(\epsilon)}(\bar{t})
  =
  \text{linear interpolation of }
  \mathsf c_{ {\left\lfloor  \epsilon \bar{t} \right\rfloor} }^{(\epsilon)}
  \text{ and }
  \mathsf c_{ {\left\lfloor  \epsilon \bar{t} \right\rfloor} + 1 }^{(\epsilon)}
  \qquad
  0 \leq \bar{t} \leq \bar{T}
  \;.
\end{equation}
In particular, any initial distribution of $\hat{x}_0$ induces
a measure $\Gamma^{(\epsilon)}$ on $C[0,\bar{T}]$.

In the previous sections we derived results that assume
a proper standard family ${\mathcal G}_0$ as distribution
of $\hat{x}_0$. Since the very notion of a standard family
depends on $\epsilon$ through the fact that the cone field
$\hat{{\mathcal C}}^u$ get more narrow in the $\mathsf c$--direction
as $\epsilon$ gets smaller, we will only consider
initial distributions of the form
\begin{equation}
  \label{eqn_initialDist}
  \hat{{\mathcal G}}_0 = {\mathcal G}_0 \times \{ \mathsf c_0 \}
  \;,
\end{equation}
where ${\mathcal G}_0$ is a proper standard family for the billiard
map ${\mathcal F}$. By Lemma~\ref{lem_lift_pG} $ \hat{{\mathcal G}}_0$
is a proper standard family for $\hat{{\mathcal F}}$ for any $\epsilon>0$,
and hence can be used as initial distribution for $\hat{x}_0$
for all $\epsilon$.
We will refer to $\hat{{\mathcal G}}_0$ as a flat proper standard family.

The first step in the study of the limit $\epsilon \to 0$
is the following elementary fact:

\begin{lemma}[Tightness]
  \label{lem_tightness}
  For any flat standard family $\hat{{\mathcal G}}_0$
  the corresponding family
  $( \Gamma^{(\epsilon)} )_\epsilon$
  of measures on $C[0,\bar{T}]$ is tight.
\end{lemma}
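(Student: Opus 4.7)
The plan is to extract from the a priori bound \eqref{eqn_speed_apriori} a deterministic almost-Lipschitz bound on the paths $\mathsf c^{(\epsilon)}$, so that for every sufficiently small $\epsilon$ every orbit in the support of $\hat{\mathcal G}_0$ yields a path inside a fixed compact subset of $C[0,\bar T]$. Tightness of $(\Gamma^{(\epsilon)})_\epsilon$ then follows immediately from the Arzel\`a--Ascoli compactness criterion, without recourse to any of the dynamical machinery of Sections~\ref{sect_cones}--\ref{sect_ucurves}.

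The key one-step estimate is as follows. By \eqref{eqn_speed_apriori} the sequence $(\mathsf c_n^{(\epsilon)})_n$ is non-increasing and hence bounded above by the common initial value $\mathsf c_0$, which is shared by every orbit because of the flatness assumption \eqref{eqn_initialDist}. Combining this with the scaling assumption \eqref{eqn_dissCoeffFamily} gives
$$0 \;\leq\; \mathsf c_n^{(\epsilon)} - \mathsf c_{n+1}^{(\epsilon)} \;\leq\; \eta_{\max}^{(\epsilon)} \, \mathsf c_0 \;\leq\; C\, \epsilon\, \mathsf c_0$$
along every orbit, whence $|\mathsf c_{n_2}^{(\epsilon)} - \mathsf c_{n_1}^{(\epsilon)}| \leq C \epsilon (n_2 - n_1)\, \mathsf c_0$. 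Under the $1/\epsilon$ time rescaling built into \eqref{eqn_def_path} the indices corresponding to $\bar t_1, \bar t_2 \in [0,\bar T]$ differ by at most $|\bar t_2 - \bar t_1|/\epsilon + 1$, so linear interpolation yields the pathwise bound
$$|\mathsf c^{(\epsilon)}(\bar t_2) - \mathsf c^{(\epsilon)}(\bar t_1)| \;\leq\; C\, \mathsf c_0\, (|\bar t_2 - \bar t_1| + \epsilon),$$
valid uniformly over every sample path and every $\epsilon \in (0,1]$.

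Given $\eta > 0$, pick $\delta \in (0, \eta/(3 C \mathsf c_0))$. For every $\epsilon \leq \delta$ each realization of $\mathsf c^{(\epsilon)}$ lies deterministically in
$$K_\delta \;=\; \bigl\{ f \in C[0,\bar T] : f(0) = \mathsf c_0,\; 0 \leq f \leq \mathsf c_0,\; \sup_{|\bar t_1 - \bar t_2| \leq \delta} |f(\bar t_1) - f(\bar t_2)| \leq \eta \bigr\},$$
which is relatively compact by Arzel\`a--Ascoli. The at most countably many $\epsilon > \delta$ are handled individually by noting that each $\Gamma^{(\epsilon)}$ is a Borel probability measure on the Polish space $C[0,\bar T]$ and is therefore tight on its own; enlarging $K_\delta$ by a finite union of compact sets produces a compact $K \subset C[0,\bar T]$ with $\Gamma^{(\epsilon)}(K) \geq 1 - \eta$ for every $\epsilon$ in the family. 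The argument has no substantive obstacle: the deterministic contraction per iteration in \eqref{eqn_speed_apriori}, matched against the $1/\epsilon$ time scale in \eqref{eqn_def_path}, forces uniform equicontinuity directly.
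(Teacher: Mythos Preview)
Your approach is essentially the same as the paper's: both use the a~priori one-step bound together with the $\epsilon$-scaling in \eqref{eqn_dissCoeffFamily} to place all paths in an Arzel\`a--Ascoli compact set. However, your execution contains a small but genuine gap. The piecewise linear interpolant $\mathsf c^{(\epsilon)}$ has slope $(\mathsf c_{n+1}-\mathsf c_n)/\epsilon$ on each segment, and your own one-step bound gives $|\mathsf c_{n+1}-\mathsf c_n|\le C\epsilon\,\mathsf c_0$; hence the slope is bounded by $C\,\mathsf c_0$ on every piece, and the path is globally $C\,\mathsf c_0$-Lipschitz \emph{uniformly in $\epsilon$}. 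Your weaker modulus $C\,\mathsf c_0(|\bar t_2-\bar t_1|+\epsilon)$ is therefore unnecessarily loose, and it forces you into the patch-up for ``large $\epsilon$''. That patch is where the actual error lies: the parameter $\epsilon$ ranges over an interval, so there are uncountably many $\epsilon>\delta$, and ``enlarging $K_\delta$ by a finite union of compact sets'' does not cover them. Simply replacing your modulus by the uniform Lipschitz bound removes the need for this step entirely and recovers the paper's argument verbatim: every $\Gamma^{(\epsilon)}$ is supported in the single compact set $\{f\in C[0,\bar T]: f(0)=\mathsf c_0,\ 0\le f\le\mathsf c_0,\ \mathrm{Lip}(f)\le C\,\mathsf c_0\}$.
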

\begin{proof}
  Let
  $\hat{{\mathcal G}}_0 = {\mathcal G}_0 \times \{ \mathsf c_0 \}$.
  Then $\mathsf c_n$ is bounded by $\mathsf c_0$ for all $n$.
  Hence the corresponding paths $\bar{\mathsf c}(\bar{t})$ take
  values in $[0,\mathsf c_0]$.
  Furthermore, by \eqref{eqn_estimates_bFlowSpeedInc}
  and \eqref{eqn_dissCoeffFamily} their
  Lipschitz constant is uniformly bounded by
  $C\,\mathsf c_0$. Therefore, all measures
  $ \Gamma^{(\epsilon)} $ are supported in a compact subset
  of $C[0,\bar{T}]$, hence tightness follows.
\end{proof}

Next, for any $X \in C[0,\bar{T}]$ and any
$C^2$--bounded function $A \colon{\mathbb R}\to{\mathbb R}$ define
\begin{equation}
  \label{eqn_def_martingale}
  {\mathsf M}(\bar{t}_0, \bar{t}_1)
  =
  A( X(\bar{t}_1) )
  -
  A( X(\bar{t}_0) )
  -
  \int_{\bar{t}_0}^{\bar{t}_1}
  A'( X(\bar{t}) )
  \,h( X(\bar{t}) )
  \,d\bar{t}
  \;,
\end{equation}
where $0 \leq \bar{t}_0 \leq \bar{t}_1 \leq \bar{T}$.
When appropriate we will also use the notation
${\mathsf M}_{X}(\bar{t}_0, \bar{t}_1)$ and
${\mathsf M}^{A}_{X}(\bar{t}_0, \bar{t}_1)$
to explicitly indicate the dependence on $A$ and $X$.

As was pointed out in the above proof of Lemma~\ref{lem_tightness}
for any $(\mathsf c_n)_n$, the corresponding path
$\mathsf c \in C[0,\bar{T}]$ has a uniformly bounded
Lipschitz constant.
Combining this observation with the result of
Theorem~\ref{thm_avg1} we immediately obtain the following:

\begin{lemma}
  \label{lem_martingale1}
  Fix any $ \mathsf c_* >0$ and $0 < \epsilon_* \ll 1$.
  Then there exists a $C>0$ such that for
  any $0 < \epsilon < \epsilon_*$ and
  any proper standard family ${\mathcal G}_0^\epsilon$ whose
  support is contained in ${\mathcal M} \times (0,\mathsf c_*]$
  the estimate
  \begin{equation*}
    \Big|
    \int
    {\mathsf M}_{ \mathsf c^{(\epsilon)} }(\bar{t}_0, \bar{t}_1)
    \,\nu_{ {\mathcal G}_0^{(\epsilon)} }(d\hat{x}_0)
    \Big|
    \leq
    C\,\sqrt{\epsilon}
    \,( \left| \,  A''  \, \right|_\infty + \left| \,  A'  \, \right|_\infty )
    \;,
  \end{equation*}
  holds for all $0 \leq \bar{t}_0 \leq \bar{t}_1 \leq \bar{T}$.
\end{lemma}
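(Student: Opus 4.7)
The plan is to reduce the continuous-time bound to a discrete-time statement that Theorem~\ref{thm_avg1} can control, and then apply that theorem not over one long time window but over a sequence of short blocks whose length is tuned so that the $O(\epsilon^2 m^2)$ term in Theorem~\ref{thm_avg1} balances against the number of blocks used to cover the interval.

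Let $n_0, n_1$ be the integers associated to $\bar t_0, \bar t_1$ by the time rescaling in \eqref{eqn_def_path}, and write $N = n_1 - n_0 = O(\bar T/\epsilon)$. The Lipschitz bound on the path $\mathsf c^{(\epsilon)}$ coming from \eqref{eqn_estimates_bFlowSpeedInc} gives $A(\mathsf c^{(\epsilon)}(\bar t_i)) = A(\mathsf c_{n_i}) + O(\epsilon)|A'|_\infty \mathsf c_*$; a Riemann-sum approximation replaces the integral $\int_{\bar t_0}^{\bar t_1} A'(\mathsf c^{(\epsilon)})\,h(\mathsf c^{(\epsilon)})\,d\bar t$ by $\sum_{k=n_0}^{n_1-1} \epsilon\, A'(\mathsf c_k)\,h(\mathsf c_k)$ with error $O(\epsilon)\mathsf c_*(\mathsf c_*|A''|_\infty + |A'|_\infty)$, the integrand being Lipschitz in $\bar t$ uniformly in $\epsilon$; and \eqref{eqn_def_avgSpeedRate_convergence} turns $\epsilon\, h(\mathsf c_k)$ into $\bar g^{(\epsilon)}(\mathsf c_k) + O(\epsilon^2)\mathsf c_k$, contributing at most $O(\epsilon)\mathsf c_*|A'|_\infty$ after summing over $N$ terms. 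All three reductions are $O(\epsilon)$ and comfortably absorbed in the target $O(\sqrt{\epsilon})$ bound, so the task reduces to estimating
\[
 \int \Big( A(\mathsf c_{n_1}) - A(\mathsf c_{n_0}) - \sum_{k=n_0}^{n_1-1} A'(\mathsf c_k)\,\bar g^{(\epsilon)}(\mathsf c_k)\Big)\, \nu_{{\mathcal G}_0^{(\epsilon)}}(d\hat x_0).
\]

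A single application of Theorem~\ref{thm_avg1} with $m = N$ only yields $C[\epsilon + \epsilon^2 N^2] = O(1)$, which is useless. Instead I partition $\{n_0, \ldots, n_1\}$ into $J$ consecutive blocks of length $m_\epsilon = \lceil \epsilon^{-1/2}\rceil$ (with a possibly shorter final block) and apply Theorem~\ref{thm_avg1} on each block with starting distribution the proper standard family $\hat{{\mathcal G}}_j = \hat{{\mathcal F}}^{n_0 + jm_\epsilon}(\hat{{\mathcal G}}_0^{(\epsilon)})$. By Lemma~\ref{lem_invariance_properStandardFamilies} each $\hat{{\mathcal G}}_j$ is proper, and by the a priori bound \eqref{eqn_speed_apriori} its support is still contained in ${\mathcal M} \times (0, \mathsf c_*]$, so the constant from Theorem~\ref{thm_avg1} is the same on every block. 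Both the block-wise increments $A(\mathsf c_{n_0+(j+1)m_\epsilon}) - A(\mathsf c_{n_0+jm_\epsilon})$ and the block-wise sums of $A'(\mathsf c_k)\bar g^{(\epsilon)}(\mathsf c_k)$ telescope to reconstruct the quantity above. By the choice of $m_\epsilon$ each block contributes $C[\epsilon + \epsilon^2 m_\epsilon^2]\mathsf c_*(\mathsf c_*|A''|_\infty + |A'|_\infty) = O(\epsilon)\mathsf c_*(\mathsf c_*|A''|_\infty + |A'|_\infty)$, and with $J = O(\epsilon^{-1/2})$ blocks the total is $O(\sqrt{\epsilon})\,\mathsf c_*(\mathsf c_*|A''|_\infty + |A'|_\infty)$, which matches the claimed bound after absorbing $\mathsf c_*$ into $C$.

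The main point, rather than a true obstacle, is the quantitative balance $m_\epsilon \sim \epsilon^{-1/2}$: this is precisely the block length that minimizes the product of the per-block error from Theorem~\ref{thm_avg1} and the number of blocks required to fill the time interval of length $N\epsilon = O(\bar T)$. Everything else is bookkeeping, the essential inputs being Lemma~\ref{lem_invariance_properStandardFamilies} (to keep the uniformity of Theorem~\ref{thm_avg1} across blocks) and the a priori estimate \eqref{eqn_speed_apriori} (to ensure the support of every intermediate $\hat{{\mathcal G}}_j$ remains inside ${\mathcal M}\times(0,\mathsf c_*]$, preserving the dependence on $|A'|_\infty, |A''|_\infty$ in the final bound).
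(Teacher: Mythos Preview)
Your proposal is correct and follows essentially the same approach as the paper: block the time interval into pieces of length $m_\epsilon\sim\epsilon^{-1/2}$, apply Theorem~\ref{thm_avg1} on each block, and use the Lipschitz continuity of the path $\mathsf c^{(\epsilon)}$ together with \eqref{eqn_def_avgSpeedRate_convergence} to pass between the continuous and discrete formulations. The only cosmetic differences are that the paper keeps a generic block length $M$ and optimizes at the end, and that it phrases the blocking via a pointwise telescoping identity integrated against the fixed initial family (using the ``for all $n$'' in Theorem~\ref{thm_avg1}) rather than pushing the family forward as you do; these are equivalent.
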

\begin{proof}
  Fix a large integer $M \geq 1$, whose value
  will be chosen below.
  For any integer $0 \leq b_1$, and any integer
  $1 \leq m \leq M$
  the point-wise identity
  \begin{align*}
    A(\mathsf c_{n+b_1 M + m})
    &
    -
    A(\mathsf c_{n})
    -
    \sum_{k=n}^{ n+b_1 M +m -1}
    A'( \mathsf c_{k} )
    \,\bar{g}( \mathsf c_{k} )
    \\
    &=
    \sum_{b = 0}^{b_1 - 1}
    \Big(
    A(\mathsf c_{n+ (b+1) M})
    -
    A(\mathsf c_{n + b M })
    -
    \sum_{k=0}^{ M -1}
    A'( \mathsf c_{n + b M +k} )
    \,\bar{g}( \mathsf c_{n + b M +k} )
    \Big)
    \\
    &\quad
    +
    A(\mathsf c_{n+b_1 M + m})
    -
    A(\mathsf c_{n+b_1 M})
    -
    \sum_{k= n+b_1 M }^{ n+b_1 M +m -1}
    A'( \mathsf c_{k} )
    \,\bar{g}( \mathsf c_{k} )
  \end{align*}
  implies with Theorem~\ref{thm_avg1} the estimate
  \begin{align*}
    \Big|
    \int
    \Big(
    &
    A(\mathsf c_{n+b_1 M+m})
    -
    A(\mathsf c_{n})
    -
    \sum_{k=n}^{ n+ b_1 M+m -1}
    A'( \mathsf c_{k} )
    \,\bar{g}( \mathsf c_{k} )
    \Big)
    \,\nu_{ {\mathcal G}_0 }(d\hat{x}_0)
    \Big|
    \\
    &\leq
    C
    \,\Big[
    \epsilon\,b_1M
    \,( M^{-1} + \epsilon M )
    +
    \epsilon\,( 1 + \epsilon\,m^2)
    \Big] \times
    \\
    &\quad
    \qquad\qquad
    \times
    \int
    ( \mathsf c_n \,\left| \,  A''  \, \right|_\infty + \left| \,  A'  \, \right|_\infty )
    \,\mathsf c_n
    \,\nu_{ {\mathcal G}_0 }(d\hat{x}_0)
    \;,
  \end{align*}
  for some constant $C>0$.

  Now fix $0 \leq \bar{t}_0 < \bar{t}_1 \leq \bar{T}$
  and choose $n$, $b$, $m$ such that
  $ \epsilon\,n \leq \bar{t}_0 < \epsilon\,(n+1)$ and
  $
  \epsilon\,(n+ b_1M+m)
  \leq
  \bar{t}_1
  <
  \epsilon\,(n+ b_1M+m+1)
  $.
  Since the path $\mathsf c \in C[0,\bar{T}]$
  corresponding to $(\mathsf c_n)_n$ is Lipschitz
  continuous we have
  \begin{align*}
    \Big|
    \int
    \Big(
    &
    A(\mathsf c(\bar{t}_1) )
    -
    A(\mathsf c(\bar{t}_0) )
    -
    \int_{\bar{t}_0}^{\bar{t}_1}
    A'( \mathsf c(\bar{t}) )
    \,h(\mathsf c(\bar{t}) )
    \,d\bar{t}
    \Big)
    \,\nu_{ {\mathcal G}_0 }(d\hat{x}_0)
    \Big|
    \\
    &\leq
    C
    \,\epsilon\,(b_1 + 1)\,M
    \,( M^{-1} + \epsilon M )
    \,( \left| \,  A''  \, \right|_\infty + \left| \,  A'  \, \right|_\infty )
    \int
    \,( 1 + \mathsf c_n^2 )
    \,\nu_{ {\mathcal G}_0 }(d\hat{x}_0)
    \;,
  \end{align*}
  for some uniform constant $C>0$ and for any $M$.

  Since
  $
  \epsilon\, b_1 M
  \leq
  \bar{t}_1 - \bar{t}_0
  \leq
  \bar{T}
  $
  regardless of the choice of $M$ and $\epsilon$
  we have
  \begin{equation*}
    \epsilon\,(b_1 + 1)\,M
    \,( M^{-1} + \epsilon M )
    \leq
    ( \bar{T} + \epsilon\,M)
    \,( M^{-1} + \epsilon M )
  \end{equation*}
  for any integer $1 \leq M \leq \epsilon^{-1}\,\bar{T}$.
  Optimizing the choice of $M$
  finishes the proof.
\end{proof}

\begin{theorem}
  \label{thm_avg2}
  Let ${\mathcal G}_0$ be a standard family for the billiard
  map ${\mathcal F}$ on ${\mathcal M}$ with
  ${\mathcal Z}_{{\mathcal G}_0} < \infty$.
  For any $\mathsf c_0>0$ denote by
  $( \Gamma^{(\epsilon)} )_\epsilon$
  the measures on $C[0,\bar{T}]$
  corresponding to the flat standard family
  $\hat{{\mathcal G}}_0 = {\mathcal G}_0 \times \{ \mathsf c_0 \}$
  for $\hat{{\mathcal F}}$ and $0<\epsilon \ll 1$.
  Then
  \begin{align*}
    \limsup_{\epsilon\to 0}
    \Big|
    \int
    {\mathsf M}^{A}_{ X }(\bar{t}_{k+1}, \bar{t}_{k+2})
    \prod_{ i=1 }^k B_i( X(\bar{t}_i) )
    \,\Gamma_{ {\mathcal G}_0 }^{(\epsilon)}(dX)
    \Big|
    =
    0
  \end{align*}
  holds for any
  $0 < \bar{t}_1 < \ldots < \bar{t}_{k+2} \leq \bar{T}$,
  any $C^1$--bounded functions $B_1, \ldots, B_k \colon {\mathbb R}\to{\mathbb R}$,
  and any $C^2$--bounded function $A \colon {\mathbb R}\to{\mathbb R}$.
\end{theorem}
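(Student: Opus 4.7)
The plan is to reduce the statement to Lemma~\ref{lem_martingale1} by conditioning on the state at the cutoff time $n_{k+1}:=\lfloor \bar t_{k+1}/\epsilon\rfloor$; set also $n_i:=\lfloor \bar t_i/\epsilon\rfloor$. By \eqref{eqn_def_path} and \eqref{eqn_def_martingale}, the integral under study equals
\begin{equation*}
\int F(\hat x_0)\,{\mathsf M}(\hat x_0)\,\nu_{\hat{{\mathcal G}}_0}(d\hat x_0),
\end{equation*}
where $F(\hat x_0) := \prod_{i=1}^k B_i(\mathsf c^{(\epsilon)}(\bar t_i))$ depends only on the iterates $(\hat x_n)_{0\leq n\leq n_k+1}$ and ${\mathsf M}(\hat x_0) := {\mathsf M}^A_{\mathsf c^{(\epsilon)}}(\bar t_{k+1},\bar t_{k+2})$ depends only on $(\hat x_n)_{n_{k+1}\leq n\leq n_{k+2}+1}$. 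Changing variables via $\hat y = \hat{{\mathcal F}}^{n_{k+1}}(\hat x_0)$ rewrites the integral as $\int \tilde F(\hat y)\,\tilde{\mathsf M}(\hat y)\,\nu_{\hat{{\mathcal G}}_{n_{k+1}}}(d\hat y)$; up to an $\operatorname{\mathcal{O}}(\epsilon)$ shift in the time origin, $\tilde{\mathsf M}(\hat y)$ is the martingale on the \emph{forward} orbit of $\hat y$ over the interval $[0,\bar t_{k+2}-\bar t_{k+1}]$, which is exactly the quantity Lemma~\ref{lem_martingale1} handles.

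The next step is to invoke \eqref{eqn_StandardFamilyGrowth} and Lemma~\ref{lem_invariance_properStandardFamilies}: for $\epsilon$ small enough that $n_{k+1}$ exceeds a constant depending only on ${\mathcal Z}_{\hat{{\mathcal G}}_0}$, the family $\hat{{\mathcal G}}_{n_{k+1}}$ is proper, with standard-pair decomposition $\nu_{\hat{{\mathcal G}}_{n_{k+1}}} = \int_{{\mathcal A}}\hat\rho_\alpha\,\mathrm{m}_{\hat\gamma_\alpha}\,\lambda(d\alpha)$. I would then show that $\tilde F$ is essentially constant on each $\hat\gamma_\alpha$: since $\hat\gamma_\alpha$ is the image under $\hat{{\mathcal F}}^{n_{k+1}}$ of a single connected sub-arc of an initial standard pair, Lemma~\ref{lem_uCones_expansion} yields
\begin{equation*}
|\hat{{\mathcal F}}^{-(n_{k+1}-n_i)}(\hat\gamma_\alpha)|\leq C\,|\hat\gamma_\alpha|\,\Lambda^{-(n_{k+1}-n_i)},\qquad i=1,\dots,k.
\end{equation*}
Because $\bar t_{k+1}-\bar t_k>0$ is a fixed gap, one has $n_{k+1}-n_i\geq (\bar t_{k+1}-\bar t_k)/(2\epsilon)$ for all small $\epsilon$, and the $C^1$ bounds on the $B_i$ then imply that the variation of $\tilde F$ along $\hat\gamma_\alpha$ is super-polynomially small in $\epsilon$.

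Replacing $\tilde F$ by its value $F_\alpha$ at a reference point on $\hat\gamma_\alpha$ and applying Lemma~\ref{lem_martingale1} to the proper standard family $(\hat\gamma_\alpha,\hat\rho_\alpha)$ yields
\begin{equation*}
\Bigl|\int \tilde{\mathsf M}(\hat y)\,\hat\rho_\alpha(\hat y)\,\mathrm{m}_{\hat\gamma_\alpha}(d\hat y)\Bigr|\leq C\sqrt\epsilon\,\bigl(\left| \, A'' \, \right|_\infty + \left| \, A' \, \right|_\infty\bigr).
\end{equation*}
Integrating over $\lambda$ (and using $|F_\alpha|\leq \prod_i \left| \, B_i \, \right|_\infty$) together with the uniform bound on $|\tilde{\mathsf M}|$ to absorb the super-polynomial approximation error produces an overall bound of $\operatorname{\mathcal{O}}(\sqrt\epsilon)$, which vanishes as $\epsilon\to 0$.

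The principal obstacle is the control of the variation of the past-history factor $\tilde F$ on standard pairs of $\hat{{\mathcal G}}_{n_{k+1}}$. This rests on the backward contraction of $\hat{{\mathcal F}}^{-1}$ along the unstable direction at rate $\Lambda^{-1}$, together with the observation that no new singularities are encountered during the pull-back --- automatic because $\hat\gamma_\alpha$ is itself a forward image of a smooth sub-arc that avoided all singularities on the interval $[0,n_{k+1}]$. This structural fact, combined with the strictly positive time gap $\bar t_{k+1}-\bar t_k$, is what decouples the past from the future and reduces the problem to the already-established Lemma~\ref{lem_martingale1}.
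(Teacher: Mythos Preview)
Your route is genuinely different from the paper's. The paper peels off the factors $B_1,\dots,B_k$ one at a time by induction: at stage $i$ it pushes forward to time $n_i$, uses only the $O(\epsilon)$ variation of $\mathsf c$ along an unstable curve (equation \eqref{eqn_uCurveExt_variationC}) to freeze $B_i$ on each pair, and then invokes the growth property \eqref{eqn_StandardFamilyGrowth} with Markov's inequality to show that, after the next $n_{i+1}-n_i\sim\Delta/\epsilon$ iterates, all but an $O(\epsilon)$--fraction of those pairs have grown into proper standard families. Your argument is more direct: push once to $n_{k+1}$ and use exponential backward contraction in the adapted metric to make the entire product $\tilde F$ constant on each pair of $\hat{\mathcal G}_{n_{k+1}}$. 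The backward-contraction step is sound (indeed it yields a super-polynomially small error, far better than needed), and the observation that $\hat\gamma_\alpha$ pulls back to a single smooth sub-arc avoiding all singularities is exactly right.

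There is, however, a real gap at the last step. You write ``applying Lemma~\ref{lem_martingale1} to the proper standard family $(\hat\gamma_\alpha,\hat\rho_\alpha)$'', but an individual standard pair of a proper family need \emph{not} be proper: properness of $\hat{\mathcal G}_{n_{k+1}}$ means only $\mathcal Z_{\hat{\mathcal G}_{n_{k+1}}}<Z_p$, and this is entirely compatible with a positive $\lambda$--fraction of pairs having length below $\ell_p$ (hence $\mathcal Z$--value above $Z_p$). Lemma~\ref{lem_martingale1} as stated does not apply to such pairs. The fix is short: with $K:=\prod_i\left| \, B_i \, \right|_\infty$ write $F_\alpha=(F_\alpha+2K)-2K$, so that both resulting weights lie in $[K,3K]$; each reweighting of $\lambda$ then defines a standard family with $\mathcal Z\le 3\,\mathcal Z_{\hat{\mathcal G}_{n_{k+1}}}\le 3Z_p$, to which Lemma~\ref{lem_martingale1} (whose proof depends on $Z_p$ only through a constant) applies directly. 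Alternatively you can imitate the paper's device at this single step: stop at time $n_k+m_0$ with $m_0\asymp\log(1/\epsilon)$, freeze $\tilde F$ there by backward contraction, and then use the growth lemma exactly as the paper does to discard the $O(\epsilon)$--measure of pairs that fail to become proper by time $n_{k+1}$.
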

\begin{proof}
  Let
  $
  \Delta
  =
  \min_{i=1, \ldots, k }
  \bar{t}_{i+1} - \bar{t}_i
  $. For every $0 < \epsilon \ll 1$ denote by
  $n_1 < \ldots < n_k < n_{k+1} $
  such that
  $ \epsilon\,n_i \leq \bar{t}_i < \epsilon\,( n_i + 1 ) $
  for all $i=1, \ldots, k+1$.
  Clearly,
  $ n_{i+1} - n_{i} > \epsilon^{-1}\,\Delta - 1 $.
  The point-wise estimate
  \begin{equation*}
    | B_i( \mathsf c(\bar{t}_i) ) - B_i( \mathsf c_{n_i} ) |
    \leq
    C\,\epsilon \,\left| \,  B_i'  \, \right|_\infty \,\mathsf c_*
    \qquad
    i=1, \ldots, k
    \;,
  \end{equation*}
  implies
  \begin{equation*}
    \prod_{ i=1 }^k B_i( \mathsf c(\bar{t}_i) )
    =
    \prod_{ i=1 }^k B_i( \mathsf c_{n_i} )
    +
    R_1
    \;,\quad
    |R_1|
    \leq
    C\,\epsilon
  \end{equation*}
  for some uniform constant $C$.
  Hence
  \begin{align*}
    \Big|
    \int
    {\mathsf M}_{ \mathsf c }(\bar{t}_{k+1}, \bar{t}_{k+2})
    \prod_{ i=1 }^k B_i( \mathsf c(\bar{t}_i) )
    \,\nu_{ {\mathcal G}_0 }(d\hat{x}_0)
    \Big|
    \leq
    | I_1 |
    +
    C\,\epsilon
    \,( \left| \,  A  \, \right|_\infty + \bar{T}\,\left| \,  A'  \, \right|_\infty )
  \end{align*}
  for some uniform $C>0$,
  where
  \begin{equation*}
    I_1
    =
    \int
    {\mathsf M}_{ \mathsf c }(\bar{t}_{k+1}, \bar{t}_{k+2})
    \prod_{ i=1 }^k B_i( \mathsf c_{n_i} )
    \,\nu_{ {\mathcal G}_0 }(d\hat{x}_0)
    \;.
  \end{equation*}

  Since ${\mathcal G}_0$ is  standard family with
  ${\mathcal Z}_{{\mathcal G}_0} < \infty$ it follows from the growth
  property \eqref{eqn_StandardFamilyGrowth} that
  there exists an integer $n_0 \geq 0$ (independent of $\epsilon$)
  such that $ \hat{{\mathcal F}}^{n_0} {\mathcal G}_0 $
  is a proper standard family.
  As we consider only the case $\epsilon \to 0$ we
  may assume that $n_1 > n_0$, and hence
  $ {\mathcal G}_{n_1} = \hat{{\mathcal F}}^{n_1} {\mathcal G}_0 $
  is a proper standard family.

  The next step of the proof is an induction argument.
  Since
  $
  {\mathsf M}_{ \mathsf c }(\bar{t}_{k+1}, \bar{t}_{k+2})
  \prod_{ i=2 }^k B_i( \mathsf c_{n_i} )
  $
  is a bounded function of
  $ (\mathsf c_n)_{n \geq n_2} $,
  it can be written as
  $ Z_{n_2} \circ \hat{{\mathcal F}}^{n_2}(\hat{x}_0) $ for some
  bounded function $Z_{n_2}$ on $\hat{{\mathcal M}}$.

  In particular,
  \begin{align*}
    I_1
    &=
    \int
    Z_{n_2} \circ \hat{{\mathcal F}}^{n_2}(\hat{x}_0)
    \,B_1( \mathsf c_{n_1} )
    \,\nu_{ {\mathcal G}_0 }(d\hat{x}_0)
    =
    \int
    Z_{n_2} \circ \hat{{\mathcal F}}^{n_2-n_1}(\hat{x})
    \,B_1( \mathsf c )
    \,\nu_{ {\mathcal G}_{n_1} }(d\hat{x})
  \end{align*}
  and hence
  \begin{align*}
    I_1
    &=
    \int_{{\mathcal A}_{n_1} }
    \int_{ \hat{\gamma}_{\alpha} }
    Z_{n_2} \circ \hat{{\mathcal F}}^{n_2-n_1}(\hat{x})
    \,B_1( \mathsf c )
    \,\hat{\rho}_{\alpha}( \hat{x} )
    \, \mathrm{m}_{ \hat{\gamma}_{\alpha} } ( d\hat{x} )
    \,\lambda( d\alpha )
    \;.
  \end{align*}
  Furthermore, for any choice of $\mathsf c_{\alpha}$ on
  $\hat{\gamma}_{\alpha}$ we have
  \begin{align*}
    I_1
    &=
    I_2
    +
    R_2
    \;,\quad
    I_2
    =
    \int_{{\mathcal A}_{n_1} }
    B_1( \mathsf c_{\alpha} )
    \int_{ \hat{\gamma}_{\alpha} }
    Z_{n_2} \circ \hat{{\mathcal F}}^{n_2-n_1}(\hat{x})
    \,\hat{\rho}_{\alpha}( \hat{x} )
    \, \mathrm{m}_{ \hat{\gamma}_{\alpha} } ( d\hat{x} )
    \,\lambda( d\alpha )
  \end{align*}
  and
  \begin{align*}
    |R_2|
    &\leq
    \left| \,  Z_{n_2}  \, \right|_\infty
    \int_{{\mathcal A}_{n_1} }
    \int_{ \hat{\gamma}_{\alpha} }
    |
    B_1( \mathsf c )
    -
    B_1( \mathsf c_{\alpha} )
    |
    \,\hat{\rho}_{\alpha}( \hat{x} )
    \, \mathrm{m}_{ \hat{\gamma}_{\alpha} } ( d\hat{x} )
    \,\lambda( d\alpha )
    \\
    &\leq
    C\,\epsilon \,\left| \,  B_1'  \, \right|_\infty \left| \,  Z_{n_2}  \, \right|_\infty
  \end{align*}
  for some uniform constant $C>0$ (where we used
  \eqref{eqn_uCurveExt_variationC}).

  By the growth property
  \eqref{eqn_StandardFamilyGrowth}
  of standard families and
  \eqref{eqn_Z_asymp}
  it follows from Markov's inequality
  that there exists a constant $C>0$ such that
  \begin{align*}
    {\mathcal A}_{n_1}^m
    =
    \Big\{ \alpha \in {\mathcal A}_{n_1} \operatorname{:}
    \hat{{\mathcal F}}^m(\hat{\gamma}_{\alpha}, \hat{\rho}_{\alpha})
    \text{ is a proper standard family }
    \Big\}
  \end{align*}
  satisfies
  \begin{equation*}
    \lambda( {\mathcal A}_{n_1}^m )
    \leq
    C\,\epsilon
    \quad\text{for any}\quad
    m \geq \frac{ \log \frac{1}{\epsilon} }{ \log \frac{1}{\theta} }
  \end{equation*}
  for all $0 < \epsilon \ll 1$.
  And since $ n_2 - n_1 > \epsilon^{-1}\,\Delta - 1 $
  there exist an $0< \epsilon_* \ll 1$ such that
  \begin{equation*}
    \lambda( {\mathcal A}_{n_1} \setminus {\mathcal A}_{n_1}^{n_2-n_1} )
    \leq
    C\,\epsilon
    \quad\text{for all}\quad
    0 < \epsilon < \epsilon_*
    \;.
  \end{equation*}
  For any $\alpha \in {\mathcal A}_{n_1}^{n_2-n_1} $
  \begin{align*}
    \Big|
    \int_{ \hat{\gamma}_{\alpha} }
    Z_{n_2} \circ \hat{{\mathcal F}}^{n_2-n_1}(\hat{x})
    \,\hat{\rho}_{\alpha}( \hat{x} )
    \, \mathrm{m}_{ \hat{\gamma}_{\alpha} } ( d\hat{x} )
    \Big|
    &=
    \Big|
    \int
    Z_{n_2}(\hat{x})
    \,\nu_{
    \hat{{\mathcal F}}^{n_2-n_1}(\hat{\gamma}_{\alpha}, \hat{\rho}_{\alpha})
    }( d\hat{x} )
    \Big|
    \\
    &\leq
    \sup_{{\mathcal G}}
    \Big|
    \int Z_{n_2}(\hat{x}) \,\nu_{{\mathcal G}}( d\hat{x} )
    \Big|
  \end{align*}
  where the supremum is taken over all proper standard families
  ${\mathcal G}$ supported in ${\mathcal M} \times[0,\mathsf c_*]$.
  Therefore,
  \begin{align*}
    | I_2 |
    &\leq
    C\,\epsilon
    \,\left| \,  B_1  \, \right|_\infty
    \,\left| \,  Z_{n_2}  \, \right|_\infty
    +
    \left| \,  B_1  \, \right|_\infty
    \sup_{{\mathcal G}}
    \Big|
    \int Z_{n_2}(\hat{x}) \,\nu_{{\mathcal G}}( d\hat{x} )
    \Big|
    \;.
  \end{align*}
  
  Proceeding by induction over $k$ we conclude that
  there exist $\epsilon_*>0$, and constants
  $0<C_1,C_2$ (depending on the $C^1$--norm of $A$
  and $B_1, \ldots, B_k$) such that
  \begin{align*}
    \Big|
    \int
    {\mathsf M}_{ \mathsf c }(\bar{t}_{k+1}, \bar{t}_{k+2})
    \prod_{ i=1 }^k B_i( \mathsf c(\bar{t}_i) )
    \,\nu_{ {\mathcal G}_0 }(d\hat{x}_0)
    \Big|
    \leq
    C_1 \epsilon
    +
    C_2
    \sup_{{\mathcal G}}
    \Big|
    \int
    Z \circ \hat{{\mathcal F}}^{n_{k+1} - n_k}(\hat{x})
    \,\nu_{{\mathcal G}}( d\hat{x} )
    \Big|
  \end{align*}
  for all $0<\epsilon<\epsilon_*$,
  where
  $
  {\mathsf M}_{ \mathsf c }(\bar{t}_{k+1}, \bar{t}_{k+2})
  =
  Z \circ \hat{{\mathcal F}}^{n_{k+1}}(\hat{x}_0)
  $
  for some bounded $Z \colon \hat{{\mathcal M}} \to {\mathbb R}$.

  To finish the proof, notice that the estimate provided by
  Lemma~\ref{lem_martingale1}
  is uniform in the standard family chosen as initial condition,
  hence
  \begin{equation*}
    \sup_{{\mathcal G}}
    \Big|
    \int
    Z \circ \hat{{\mathcal F}}^{n_{k+1} - n_k}(\hat{x})
    \,\nu_{{\mathcal G}}( d\hat{x} )
    \Big|
    \leq
    C_3\,\sqrt{\epsilon}
    \;.
  \end{equation*}
  In terms of the induced measure $\Gamma$ on
  $C[0,\bar{T}]$
  \begin{align*}
    \int
    {\mathsf M}_{ \mathsf c }(\bar{t}_{k+1}, \bar{t}_{k+2})
    \prod_{ i=1 }^k B_i( \mathsf c(\bar{t}_i) )
    \,\nu_{ {\mathcal G}_0 }(d\hat{x}_0)
    =
    \int
    {\mathsf M}_{ X }(\bar{t}_{k+1}, \bar{t}_{k+2})
    \prod_{ i=1 }^k B_i( X(\bar{t}_i) )
    \,\Gamma_{ {\mathcal G}_0 }^{(\epsilon)}(dX)
  \end{align*}
  which completes the proof.
\end{proof}

Fix a flat standard family $\hat{{\mathcal G}}$, and denote
by $\Gamma_{ \hat{{\mathcal G}} }^{(\epsilon)}$ the
family of measures on $C[0,\bar{T}]$ induced by
$\hat{{\mathcal F}}$. By Lemma~\ref{lem_tightness} this sequence is tight,
and from Theorem~\ref{thm_avg2} we see that any limit point
$\Gamma_*$ satisfies
\begin{align*}
  \int
  \Big(
  A( X(\bar{t}_{k+2}) )
  -
  &
  A( X(\bar{t}_{k+1}) )
  -
  \int_{\bar{t}_{k+1}}^{\bar{t}_{k+2}}
  A'( X(\bar{t}) )
  \,h( X(\bar{t}) )
  \,d\bar{t}
  \Big)
  \times
  \\
  &\times
  \prod_{ i=1 }^k B_i( X(\bar{t}_i) )
  \,\Gamma_*(dX)
  =
  0
\end{align*}
for any
$0 < \bar{t}_1 < \ldots < \bar{t}_{k+2} \leq \bar{T}$,
any $C^1$--bounded functions $B_1, \ldots, B_k \colon {\mathbb R}\to{\mathbb R}$,
and any $C^2$--bounded function $A \colon {\mathbb R}\to{\mathbb R}$.
But this means that $\Gamma_*$ solves
the martingale problem \cite{MR2190038} for corresponding to
the linear operator
$
{\mathcal L} A (\mathsf c)
=
A'( \mathsf c ) \,h( \mathsf c )
$
with initial condition concentrated on $\{ \mathsf c_0 \}$.
Clearly, this martingale problem has a unique solution, namely
the measure
$\Gamma_{\mathsf c_0}$
on $C[0,\bar{T}]$ concentrated on the solution
curve to the initial value problem
\begin{equation}
  \label{eqn_IVP_c}
  \frac{d}{d\bar{t}} \bar{\mathsf c}(\bar{t})
  =
  h( \bar{\mathsf c}(\bar{t}) )
  \;,\quad
  \bar{\mathsf c}(0) = \mathsf c_0
  \;,\quad
  0 \leq \bar{t} \leq \bar{T}
  \;.
\end{equation}
Therefore, the limit point $\Gamma_*$ is unique, and hence
the family $\Gamma_{ \hat{{\mathcal G}} }^{(\epsilon)}$ actually converges
weakly to $\Gamma_{\mathsf c_0}$ as $\epsilon$ is sent to $0$.

Let us point out that so far we made the
assumption that the billiard table
$Q$ has a piece-wise smooth boundary with finite horizon,
i.e. $\tau_{\mathrm{max}} < \infty$, and also $\tau_{\mathrm{min}}>0$.
The latter would rule out tables that are not on the torus,
because the presence of a corner point of the boundary
$\partial Q$ would clearly violate that condition.
However, if we assume that the boundary of $Q$ has no
cusps, then there can be at most finitely many corner points.
Therefore, there exists an integer $n_*>0$ and $\tau_*>0$
such that in any sequence of $n_*$ consecutive reflections
at least one free path is longer than $\tau_*$. Therefore,
the results of Section~\ref{sect_cones} and Section~\ref{sect_ucurves}
carry over to billiard tables with finite horizon without
cusps with little or no modification. Indeed, the central result
of those sections was the Lemma~\ref{lem_growth_lemma}, which holds
as stated in the more general setting.
Therefore, we obtain the following:

\begin{theorem}[Averaged dynamics]
  \label{thm_avg3}
  Let $Q$ be a dispersing billiard table with piece-wise
  smooth boundary with finite horizon and no cusps.
  Let ${\mathcal G}_0$ be a standard family for the billiard
  map ${\mathcal F}$ on ${\mathcal M}$ with
  ${\mathcal Z}_{{\mathcal G}_0} < \infty$.
  For any $\mathsf c_0>0$ consider the
  flat standard family
  $\hat{{\mathcal G}}_0 = {\mathcal G}_0 \times \{ \mathsf c_0 \}$
  for $\hat{{\mathcal F}}$ and $0<\epsilon \ll 1$.
  Then
  \begin{equation*}
    \lim_{\epsilon\to 0}
    \int
    \sup_{0 \leq \bar{t} \leq \bar{T}}
    |
    \mathsf c^{(\epsilon)}(\bar{t})
    -
    \bar{\mathsf c}(\bar{t})
    |
    \,\nu_{{\mathcal G}_0}(dx_0)
    =
    0
    \;,
  \end{equation*}
  where $\bar{\mathsf c}$ denotes the solution to \eqref{eqn_IVP_c}.
\end{theorem}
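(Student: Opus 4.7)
The plan is to run a standard martingale-problem compactness argument on the induced path measures $\Gamma^{(\epsilon)}_{\hat{\mathcal G}_0}$, then upgrade the resulting weak convergence to $L^1$-convergence in the supremum norm by using that the limit is deterministic.

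The first preliminary step is to remove the standing hypothesis $\tau_{\min} > 0$ used throughout Sections~\ref{sect_cones}--\ref{sect_ucurves}. Since $Q$ has piece-wise smooth boundary with no cusps, there are only finitely many corner points, so there exist $n_* \in \mathbb N$ and $\tau_* > 0$ such that every window of $n_*$ consecutive collisions contains at least one free flight of length $\geq \tau_*$. I would therefore replace $\hat{\mathcal F}$ by its $n_*$-fold iterate $\hat{\mathcal F}^{n_*}$: the invariant cone field, distortion bounds, one-step expansion and growth lemma all hold for this iterate with constants depending only on $n_*$ and $\tau_*$, and the speed increment over $n_*$ steps is again $\mathcal O(\epsilon)$ with $\mathcal O(\epsilon)$ derivatives, so that the averaging estimate of Theorem~\ref{thm_avg1}, the tightness Lemma~\ref{lem_tightness}, and the martingale identification Theorem~\ref{thm_avg2} all remain valid in this more general geometric setting.

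Next, since $\mathcal Z_{{\mathcal G}_0} < \infty$, by \eqref{eqn_StandardFamilyGrowth} the lifted flat family $\hat{\mathcal G}_0 = {\mathcal G}_0 \times \{\mathsf c_0\}$ becomes a proper standard family after a finite (and $\epsilon$-independent) number of iterations, so all preceding results apply. Lemma~\ref{lem_tightness} then gives tightness of $(\Gamma^{(\epsilon)}_{\hat{\mathcal G}_0})_\epsilon$ on $C[0,\bar T]$. Let $\Gamma_*$ be any weak limit along a subsequence $\epsilon_k \to 0$. Theorem~\ref{thm_avg2}, combined with a routine density argument extending the test functionals $\prod_{i} B_i(X(\bar t_i))$ to general bounded continuous cylinder functions, shows that
\[
\int {\mathsf M}^{A}_{X}(\bar t_{k+1},\bar t_{k+2})\,\prod_{i=1}^k B_i(X(\bar t_i))\,\Gamma_*(dX) = 0
\]
for all admissible times and test functions, i.e.\ $\Gamma_*$ solves the martingale problem associated to the generator $\mathcal L A(\mathsf c) = A'(\mathsf c)\,h(\mathsf c)$ with initial datum $\delta_{\mathsf c_0}$. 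Since $h$ is locally Lipschitz on $(0,\infty)$, $|h(\mathsf c)| \leq C\mathsf c$, and $\mathsf c$ stays in $[0,\mathsf c_0]$ (so the relevant orbit remains in a compact subset of $(0,\infty)$ where $h$ is Lipschitz throughout $[0,\bar T]$), this martingale problem is well posed in the Stroock--Varadhan sense \cite{MR2190038}. Hence $\Gamma_* = \delta_{\bar{\mathsf c}}$, the Dirac measure on the unique solution of \eqref{eqn_IVP_c}, and by uniqueness of subsequential limits $\Gamma^{(\epsilon)}_{\hat{\mathcal G}_0} \Rightarrow \delta_{\bar{\mathsf c}}$.

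To conclude, weak convergence to a Dirac mass in $C[0,\bar T]$ is equivalent to convergence in probability in the uniform metric, so $\nu_{\mathcal G_0}\{\sup_{0\leq\bar t\leq \bar T} |\mathsf c^{(\epsilon)}(\bar t) - \bar{\mathsf c}(\bar t)| > \delta\} \to 0$ for every $\delta > 0$; since every path $\mathsf c^{(\epsilon)}$ is bounded by $\mathsf c_0$, bounded convergence upgrades this to the $L^1$-statement of the theorem. The main technical obstacle I anticipate is the first stage, namely verifying that passing from $\hat{\mathcal F}$ to $\hat{\mathcal F}^{n_*}$ preserves the uniform-in-$\epsilon$ estimates---in particular that the Birkhoff-type decomposition of the speed increment over $n_*$ steps does not introduce terms that break the $\mathcal O(\epsilon + \epsilon^2 m^2)$ bound of Theorem~\ref{thm_avg1}---but this is expected to be routine along the lines of \cite{MR2229799}.
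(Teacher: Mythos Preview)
Your proposal is correct and follows essentially the same approach as the paper: tightness of the path measures (Lemma~\ref{lem_tightness}), identification of any subsequential limit as a solution of the martingale problem for $\mathcal L A = A' h$ via Theorem~\ref{thm_avg2}, uniqueness of that martingale problem, and removal of the $\tau_{\min}>0$ hypothesis by the $n_*$-step argument for tables without cusps. You are slightly more explicit than the paper in spelling out the final upgrade from weak convergence to a Dirac mass to the $L^1$ supremum-norm statement via convergence in probability plus bounded convergence, but this is a routine addendum rather than a different route.
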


The result of Theorem~\ref{thm_avg3} shows that the sequence
$(\mathsf c_n)_n$, for $0 \leq n \leq \epsilon^{-1}\,\bar{T}$,
is well approximated by the solution
to the initial value problem \eqref{eqn_IVP_c} with
initial value being $\mathsf c_0$.
From the point of view of the application we have in mind, this
result is not quite satisfactory, because the
above mentioned approximation is in terms of the so-called
collision times, not the real time that has elapsed.

To address this issue we first point out that the real time
elapsed between two consecutive collisions
$\hat{x}_n$ and $\hat{x}_{n+1}$ is given by
$ \frac{\tau(x_n)}{\mathsf c_n} $, where
$\tau(x_n)$ denotes the free path of the billiard
map ${\mathcal F}$, which is determined by the geometry of the billiard
table $Q$. In order to consider
$\frac{\bar{T}}{\epsilon}$--many collisions
(as in Theorem~\ref{thm_avg3}) and have a total increment of the real
time of order one, we scale the time increment by $\epsilon$.
Hence we consider the joint dynamics
\begin{equation}
  \label{eqn_jointDynamics}
  \hat{x}_{n+1} = \hat{{\mathcal F}}( \hat{x}_n )
  \;,\quad
  t_{n+1} = t_n + \epsilon\, \frac{\tau(x_n)}{\mathsf c_n}
\end{equation}
with $t_n$ denoting the moment in time of the $n$--th collision.

Clearly, we would like a generalization of
Theorem~\ref{thm_avg3} that also includes and approximation
of $(t_n)_n$.
There is a significant difference between the analysis of the
joint dynamics \eqref{eqn_jointDynamics} and our previous
analysis. The reason why our analysis of the evolution of
$\mathsf c_n$ was rather involved is the fact that although
$(\mathsf c_n)_n$ changes only very slowly, the dynamics
of the fast variable $(x_n)_n$ depends on it, i.e. the
joint dynamics is fully coupled. Because of this we had to
study the joint dynamics of
$\hat{x}_n = (x_n, \mathsf c_n)$.
Augmenting now the evolution of $(t_n)_n$ is significantly less
complicated, because the values of $(t_n)_n$ are computed
along an orbit $(\hat{x}_n)_n$ without changing the dynamics
of $(\hat{x}_n)_n$. In particular, analyzing the joint dynamics of
$(\hat{x}_n, t_n)_n$ does not require us to construct invariant
cones and related invariant structures on the joint state space.
Instead, a straightforward adaptation of the methods of
Section~\ref{sect_avg} to paths
$(\mathsf c(\bar{t}), t(\bar{t}))_{0 \leq \bar{t} \leq \bar{T}}$
in $C[0,\bar{T}]$ yields an extension of Theorem~\ref{thm_avg3}
that we simply state below in form of Theorem~\ref{thm_avg4} without proof.

In order to state the averaged dynamics of
$(\mathsf c(\bar{t}), t(\bar{t}))$
we recall the average of the free path of the billiard dynamics
can be expressed \cite{MR2229799} in terms of basic geometric
properties of the billiard table $Q$
\begin{equation*}
  \int \tau(x)\,\nu(dx)
  =
  \frac{ \pi |Q| }{ |\partial Q| }
\end{equation*}
so that we consider the following initial value problem
\begin{equation}
  \label{eqn_IVP_ct}
  \begin{split}
    \frac{d}{d\bar{t}} \bar{\mathsf c}(\bar{t})
    &=
    h( \bar{\mathsf c}(\bar{t}) )
    \;,\quad
    \bar{\mathsf c}(0) = \mathsf c_0
    \\
    \frac{d}{d\bar{t}} t(\bar{t})
    &=
    \frac{ \pi |Q| }{ |\partial Q| }
    \,\frac{1}{ \mathsf c(\bar{t})}
    \;,\quad
    t(0) = 0
  \end{split}
  \qquad
  0 \leq \bar{t} \leq \bar{T}
  \;,
\end{equation}
generalizing \eqref{eqn_IVP_c}.

\begin{theorem}[Averaged joint dynamics]
  \label{thm_avg4}
  Let $Q$ be a dispersing billiard table with piece-wise
  smooth boundary with finite horizon and no cusps.
  Let ${\mathcal G}_0$ be a standard family for the billiard
  map ${\mathcal F}$ on ${\mathcal M}$ with
  ${\mathcal Z}_{{\mathcal G}_0} < \infty$.
  For any $\mathsf c_0>0$ consider the
  flat standard family
  $\hat{{\mathcal G}}_0 = {\mathcal G}_0 \times \{ \mathsf c_0 \}$
  for $\hat{{\mathcal F}}$ and $0<\epsilon \ll 1$.
  Then
  \begin{equation*}
    \lim_{\epsilon\to 0}
    \int
    \Big(
    \sup_{0 \leq \bar{t} \leq \bar{T}}
    |
    \mathsf c^{(\epsilon)}(\bar{t})
    -
    \bar{\mathsf c}(\bar{t})
    |
    +
    \sup_{0 \leq \bar{t} \leq \bar{T}}
    |
    t^{(\epsilon)}(\bar{t})
    -
    t(\bar{t})
    |
    \Big)
    \,\nu_{{\mathcal G}_0}(dx_0)
    =
    0
    \;,
  \end{equation*}
  where $(\bar{\mathsf c}, t)$ denotes the solution to \eqref{eqn_IVP_ct}.
\end{theorem}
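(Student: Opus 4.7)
The plan is to extend the martingale-problem argument of Section~\ref{sect_avg} from the single path $\mathsf c^{(\epsilon)}$ to the two-dimensional path $(\mathsf c^{(\epsilon)}, t^{(\epsilon)})$ in $C([0,\bar T],\mathbb R^2)$. The key conceptual simplification, already emphasized in the discussion preceding the statement, is that $(t_n)_n$ is a passive observable: the dynamics of $(\hat{x}_n)_n$ does not depend on it, so no new cone fields, standard pairs, distortion or growth estimates are required, and the families $\hat{\mathcal G}_n = \hat{\mathcal F}^n\hat{\mathcal G}_0$ used throughout Sections~\ref{sect_ucurves}--\ref{sect_avg} can be reused verbatim.

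First I would establish tightness of the joint laws $\Gamma^{(\epsilon)}$ on $C([0,\bar T],\mathbb R^2)$. The $\mathsf c$--component is controlled by Lemma~\ref{lem_tightness}. For $t^{(\epsilon)}$, under the finite-horizon no-cusp hypothesis we have $\tau_{\mathrm{max}} < \infty$, and iterating \eqref{eqn_speed_apriori} together with $\eta_{\mathrm{max}}^{(\epsilon)} \leq C\epsilon$ gives $\mathsf c_n \geq (1-\eta_{\mathrm{max}}^{(\epsilon)})^n \mathsf c_0 \geq e^{-C\bar T}\mathsf c_0$ uniformly for $n \leq \epsilon^{-1}\bar T$. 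Hence $|t_{n+1}-t_n| = \epsilon\,\tau(x_n)/\mathsf c_n \leq \epsilon\,C'$, the interpolant $t^{(\epsilon)}$ is uniformly Lipschitz on $[0,\bar T]$, and tightness of the joint family follows.

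For the martingale characterization I would associate to any $C^2$--bounded $A \colon \mathbb R^2 \to \mathbb R$ the functional
\begin{equation*}
  \mathsf M(\bar t_0,\bar t_1)
  =
  A(\mathsf c(\bar t_1),t(\bar t_1))
  -
  A(\mathsf c(\bar t_0),t(\bar t_0))
  -
  \int_{\bar t_0}^{\bar t_1}
  \Big[
    \partial_{\mathsf c}A\,h(\mathsf c(\bar t))
    +
    \partial_t A\,\tfrac{\pi|Q|}{|\partial Q|}\,\tfrac{1}{\mathsf c(\bar t)}
  \Big]\, d\bar t
\end{equation*}
and adapt Lemma~\ref{lem_martingale1} and Theorem~\ref{thm_avg2} to show $\int \mathsf M\,d\Gamma^{(\epsilon)} \to 0$ against the appropriate conditioning functionals. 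The first term is literally Theorem~\ref{thm_avg1}. For the $\partial_t A\,\tau/\mathsf c$ contribution one performs the same Taylor expansion of $A(\mathsf c_{n+m},t_{n+m})-A(\mathsf c_n,t_n)$, uses \eqref{eqn_uCurveExt_variationC} to replace $\mathsf c_k$ by the standard-pair average $\mathsf c_{n,\alpha}$ at cost $\operatorname{\mathcal{O}}(\epsilon)$, and reduces to estimating Birkhoff sums $\sum_k \tau(x_k)$ against the billiard-invariant measure $\nu$. The shadowing decomposition \eqref{eqn_expressionS} applies with $\tau$ in place of $g$ and yields $\sum_k \tau(x_k) \approx m\int \tau\,d\nu = m\,\pi|Q|/|\partial Q|$.

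The main obstacle is that $\tau$ is unbounded near grazing collisions and fails to be $C^1$ on ${\mathcal M}$, so the equidistribution estimate used in Lemma~\ref{lem_eqn_sumf_2_term1} does not apply off the shelf. However, $\tau$ is uniformly H\"older on each homogeneity surface $\hat{\mathbb H}_k$, and the decay-of-correlations bounds of \cite{MR2229799} are formulated precisely for this class of dynamically H\"older observables; this is exactly the reason the homogeneity surfaces were introduced in Section~\ref{sect_ucurves}, so the estimate goes through with only constants depending on the geometry of $Q$. Once both martingale estimates are in hand, any weak limit point $\Gamma_*$ solves the martingale problem for the operator ${\mathcal L}A(\mathsf c,t) = \partial_{\mathsf c}A\,h(\mathsf c) + \partial_t A\,(\pi|Q|/|\partial Q|)\,\mathsf c^{-1}$ started at $\delta_{(\mathsf c_0,0)}$. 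Since $h$ and $1/\mathsf c$ are locally Lipschitz on $(0,\infty)$ and $\bar{\mathsf c}(\bar t)$ stays in a compact subset of $(0,\infty)$ on $[0,\bar T]$, the system \eqref{eqn_IVP_ct} is uniquely solvable, the martingale problem has the unique solution $\delta_{(\bar{\mathsf c},t)}$, and weak convergence to a Dirac mass on a continuous path upgrades to the uniform-in-expectation convergence asserted by the theorem.
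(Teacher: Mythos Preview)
Your proposal is correct and follows precisely the route the paper itself indicates: the paper does not give a proof of Theorem~\ref{thm_avg4} but explicitly states that it is obtained by a straightforward adaptation of the martingale-problem argument of Section~\ref{sect_avg} to the two-component path $(\mathsf c(\bar t), t(\bar t))$, exploiting that $(t_n)_n$ is a passive observable so that no new invariant structures are needed. Your outline of tightness, the extended martingale functional, the reduction to Birkhoff sums of $\tau$, and the uniqueness of the martingale problem for \eqref{eqn_IVP_ct} is exactly this adaptation.

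One small slip: you write that ``$\tau$ is unbounded near grazing collisions''. Under the finite-horizon hypothesis (which you yourself invoke two paragraphs earlier) one has $\tau_{\mathrm{max}}<\infty$; what fails is the $C^1$ regularity of $\tau$ across the singularity set, because its \emph{derivative} blows up. Your remedy---passing to the class of dynamically H\"older observables on homogeneity strips as in \cite{MR2229799}---is the correct one and handles this issue, so the argument goes through unchanged.
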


Theorem~\ref{thm_avg4} shows that in the limit as $\epsilon$ tends to
$0$ the joint dynamics
$( \mathsf c^{(\epsilon)}(\bar{t}), t^{(\epsilon)}(\bar{t}) )$
in $C[0,\bar{T}]$ can be well approximated by the solution
to the initial value problem \eqref{eqn_IVP_ct}.
And that initial value problem implies
\begin{equation*}
  \frac{d}{dt} \bar{\mathsf c}(t)
  =
  \frac{ |\partial Q| }{ \pi |Q| }
  \, \bar{\mathsf c}(t)
  \,h( \bar{\mathsf c}(t) )
  \;,\quad
  \bar{\mathsf c}(0) = \mathsf c_0
  \;,
\end{equation*}
which eliminates the artificial variable $\bar{t}$,
and expresses the evolution of $\bar{\mathsf c}$
in terms of the elapsed time $t$.
Using the definition \eqref{eqn_def_avgSpeedRate}
of $h$ we can rewrite this as
\begin{equation}
  \label{eqn_IVP_c_t}
  \frac{d}{dt} \bar{\mathsf c}(t)
  =
  -
  \frac{ |\partial Q| }{ \pi |Q| }
  \, \bar{\mathsf c}(t)^2
  \int_0^{\frac{\pi}{2}}
  q(\bar{\mathsf c}(t)\,\cos\varphi )
  \,\cos^3\varphi\,d\varphi
  \;,\quad
  \bar{\mathsf c}(0) = \mathsf c_0
  \;.
\end{equation}
directly in terms of $q$.
This proves our main result Theorem~\ref{thm_mainResult}.

\section{Conclusion}
\label{sect_conclusions}.

The derivation of transport coefficients from
microscopic models typically results in
an expression for the transport coefficient in
terms of a correlation sum typically referred to
as Green-Kubo formula
\cite{zbMATH00052458},
\cite{MR606459,MR1138952,MR1149489,MR1376436,CHERNOV200037,MR1224092,MR1832968,MR2389891,MR2349520,MR2583573,MR2737493}.
The present work derives an equation for the cooling of
a system with dissipative interactions, which is not
expressed through a Green-Kubo formula. This is because the effect
we study is due to the slow motion being averaged by the
fast moving billiard dynamics. The main result is the derivation
of Haff's law for the cooling. Indeed, in the special case
of a constant restitution coefficient $\eta\equiv \epsilon$
it follows from \eqref{eqn_ex1} that the statement of
Theorem~\ref{thm_mainResult} takes on the particular form
\begin{equation*}
  \frac{d}{dt} \bar{\mathsf c}(t)
  =
  -
  \frac{2}{3}
  \,\frac{ |\partial Q| }{ \pi |Q| }
  \, \bar{\mathsf c}(t)^2
  \;,
\end{equation*}
whose solutions read
\begin{equation*}
  \frac{1}{ \bar{\mathsf c}(t) }
  =
  \frac{1}{ \bar{\mathsf c}(0) }
  +
  \frac{2}{3}
  \,\frac{ |\partial Q| }{ \pi |Q| }
  \,t
  \;.
\end{equation*}
In other words, as a function of time
the reciprocal of the speed (i.e. the square-root
of the internal kinetic energy) is a straight line.
This is precisely Haff's cooling law \cite{2150608,MR2101911}.

Our assumption of a finite horizon is of technical nature.
It is used in two places. First it is used in the derivation
of the growth lemma through the one-step expansion
property Lemma~\ref{lem_oneStep_expansion}.  For standard billiards
and certain perturbations of it
this property is known to be true also for the infinite horizon
situation \cite{MR2349520,MR2583573,MR2737493}.
The second place where the finite horizon assumption was used
is the extension of Theorem~\ref{thm_avg3} to Theorem~\ref{thm_avg4}.
In both places it is very likely true that the finite
horizon condition is not needed.

A significantly more complicated extension
of our results would be a generalization to many particles. No
results related to this are known to the author.

\bibliography{haff_law}
\bibliographystyle{plain}

\end{document}